\documentclass[12pt,reqno]{amsart}
\DeclareUnicodeCharacter{3002}{.}
\usepackage[nolonger]{optional}
\usepackage{amssymb,bbm,graphicx}
\usepackage{color}
\usepackage{comment}
\usepackage{footnote}
\usepackage{enumitem}
\usepackage[colorlinks=true,linkcolor=black,citecolor=black,urlcolor=purple]{hyperref} 
\usepackage{flushend}
\usepackage{tikz}

\usepackage{amsmath}
\usepackage{mathtools}
\usepackage{subcaption}

\usepackage{cite}

\DeclareMathSymbol{\shortminus}{\mathbin}{AMSa}{"39} 

\usepackage{tikz}
\usepackage{tikz-3dplot}
\usepackage{tikz-cd}
\usepackage{xcolor}
\usetikzlibrary{arrows,decorations,decorations.markings,calc,positioning,shapes.geometric,arrows.meta} 
\usepackage[all]{xy}
\usepackage[margin=30mm]{geometry}
\usepackage{mathrsfs}
\usepackage{appendix}

\usepackage{setspace}
\newcommand{\marginparstretch}{0.6}
\let\oldmarginpar\marginpar
\renewcommand\marginpar[1]{\-\oldmarginpar[\framebox{\setstretch{\marginparstretch}\begin{minipage}{\marginparwidth}{\raggedleft\tiny #1}\end{minipage}}]{\framebox{\setstretch{\marginparstretch}\begin{minipage}{\marginparwidth}{\raggedright\tiny #1}\end{minipage}}}}

\numberwithin{equation}{section}
\numberwithin{figure}{section}
\theoremstyle{plain}
\newtheorem{theorem}{Theorem}[section]
\newtheorem{proposition}[theorem]{Proposition}
\newtheorem{lemma}[theorem]{Lemma}
\newtheorem{corollary}[theorem]{Corollary}

\theoremstyle{remark}
\newtheorem{remark}[theorem]{Remark}
\theoremstyle{remark}

\theoremstyle{definition}
\newtheorem{definition}[theorem]{Definition}

\allowdisplaybreaks[3]

\newcommand\define[1]{#1}

\newcommand\cO{\mathcal{O}}

\DeclareMathOperator\Hom{Hom}
\DeclareMathOperator\dHom{Hom}
\newcommand\dHomk{\dHom^\bullet}

\providecommand{\cE}{\mathcal{E}}
\providecommand{\cF}{\mathcal{F}}
\providecommand{\cG}{\mathcal{G}}

\providecommand{\cK}{\mathcal{K}}
\providecommand{\cM}{\mathcal{M}}

\providecommand{\cO}{\mathcal{O}}
\providecommand{\R}{\mathbb{R}}
\providecommand{\PP}{\mathbb{P}}
\providecommand{\Z}{\mathbb{Z}}

\providecommand{\C}{\mathbb{C}}

\providecommand{\F}{\mathbb{F}}
\providecommand{\Br}{\mathrm{Br}}
\providecommand{\Aut}{\mathrm{Aut}}
\providecommand{\AT}{\mathrm{AT}}

\providecommand{\Spec}{\operatorname{Spec}}
\providecommand{\Hilb}{\operatorname{Hilb}}
\providecommand{\SL}{\operatorname{SL}}
\providecommand{\diag}{\operatorname{diag}}

\newcommand\sm\shortminus

\newcommand{\hexsetup}{
	\def\a{1}
	\def\hs{0.8660254037844386} 
}

\newcommand{\hexagon}[3]{%
	\begin{scope}[shift={(#1,#2)}]
		\draw[thick]
		(0,\a) -- (\hs,\a/2) -- (\hs,-\a/2) -- (0,-\a) -- (-\hs,-\a/2) -- (-\hs,\a/2) -- cycle;
		\node at (0,0) {$#3$};
	\end{scope}
}

\title[Some faithful algebraic braid twist group actions for 3-fold crepant resolutions]{Some faithful algebraic braid group twist actions\\for 3-fold crepant resolutions}
\author{Luyu Zheng}
\email{zhengluyu84@gmail.com}
\begin{document}
\begin{sloppypar}
\phantom{.}\vspace{-1cm} 

\thispagestyle{empty}

\begin{abstract}
	Let $X(1,3,a)$ be a crepant resolution of the quotient singularity $\C^3/G$, where $G\subset \SL(3,\C)$ is a diagonal cyclic subgroup acting on $\C^3$ with weights $(1,3,a)$. 
	For each such $X(1,3,a)$, we construct a $(Q,W)$-configuration of spherical objects in the bounded derived category of coherent sheaves.
	When $a=9$, the derived category $D(X(1,3,9))$ admits a faithful algebraic braid twist group action of type~D induced by the associated $(Q,W)$-configuration.
	When $a=13$, the derived category $D(X(1,3,13))$ admits a faithful algebraic braid twist group action of type~E.
	These two cases illustrate the emergence of type~D and type~E patterns from specific geometric data, supporting a broader conjectural framework.
\end{abstract}
\maketitle


\setcounter{tocdepth}{1}
\tableofcontents

\section{Introduction}
Crepant resolutions of singular varieties play an important role in algebraic geometry. 
Many symmetries of such resolutions can be studied through the autoequivalences of the derived category of coherent sheaves, providing a useful way to understand the structure of the underlying singularity.

Spherical twists along spherical objects form a fundamental geometric source of such autoequivalences. Suitable collections of spherical objects give rise to braid group actions on derived categories. For example, Seidel–Thomas~\cite{ST} constructed a faithful Artin braid group action, and Nordskova–Volkov~\cite{NV} proved the existence of faithful braid group actions associated with simply-laced Dynkin types in an enhanced triangulated setting. In dimension two, Bridgeland~\cite{Brid} constructed type~ADE-configurations in the minimal resolutions of Kleinian singularities, and Brav–Thomas~\cite{BT} showed the faithfulness.

In dimension three, a type~A-configuration was constructed by Seidel–Thomas in~\cite[end of Section~3]{ST}. 
However, for crepant resolutions of threefold quotient singularities, it is much less clear how configurations related to Dynkin types~D and~E should appear, or how the corresponding braid group actions can be realized geometrically.

Qiu–Woolf~\cite{QW} constructed the Dynkin $Q$-configuration in the special derived category $D(\Gamma_N Q)$ for $\Gamma_N Q$ the Calabi–Yau-$N$ Ginzburg algebra associated with a Dynkin quiver $Q$ and showed that the induced $\Br(Q)$-action is faithful. We focus primarily on $G\text{-}\Hilb(\C^3)$, a crepant resolution of $\C^3/G$, where $G$ is a finite abelian subgroup of $\SL(3,\C)$. There is an equivalence $D(G\text{-}\Hilb(\C^3))\cong D(\Gamma_3 Q_{\operatorname{McK}})$~\cite[Corollary~4.4.8(2)]{Ginz}, where $Q_{\operatorname{McK}}$ is the McKay quiver of $G$. However, this equivalence does not directly yield Dynkin type configurations, since $Q_{\operatorname{McK}}$ is not of Dynkin type for most subgroups $G$.

In joint work with Donovan, we previously studied a specific example of a crepant resolution $X=G\text{-}\Hilb(\C^3)$ of a cyclic quotient singularity. We showed that the derived category $D(X)$ carries a faithful action of a quiver braid group, where the relevant quiver is a 3-cycle. The proof of faithfulness in that case relied on the type~A framework of Seidel–Thomas.

In this paper, we continue this study by analyzing two specific examples of crepant resolutions $G\text{-}\Hilb(\C^3)$ of multiply cyclic quotient singularities, with the aim of shedding light on the general case. In contrast to our joint work, we demonstrate faithful actions of the algebraic braid twist group $\AT(Q,W)$~\cite{GM,Qiu} via the $(Q,W)$-configurations defined in Definition~\ref{def.con}. Here $(Q,W)$ is a quiver with potential that can be transformed into quivers of type~D and~E, connecting our work to the framework of Nordskova–Volkov~\cite{NV}. This supports the view that an ADE-type pattern governs braid group actions arising from threefold singularities, with our $(Q,W)$-configurations serving as generalized, transformable versions of the classical Dynkin quivers.

\subsection{Conventions}

We take $X$ to be a smooth complex quasi-projective variety. We write $D(X)$ for the bounded derived category of coherent sheaves on~$X$, and take functors to be derived.
For a cyclic subgroup $G=\mu_r\subset\SL(3,\C)$ with weights $(1,s,r-s-1)$, we denote $G\text{-}\Hilb(\C^3)$ by $X(1,s,r-s-1)$. By~\cite{Nakamura}, $X(1,s,r-s-1)\to \mathbb{C}^3/G$ is a crepant resolution, and the junior simplex of $X(1,s,r-s-1)$ is described in~\cite{Craw,CR,IR}. The point $(0,0,0)\in \mathbb{C}^3/\mu_r$ is an isolated singularity if and only if $\operatorname{gcd}(s,r,r-s-1)=1$. Thus we only consider this case.

\subsection{Main theorem}\label{sec.main}
\begin{theorem}[{Theorem~\ref{thm.D6}}]
	The collection of spherical twists associated with exceptional surfaces in $X_1\coloneqq X(1,3,9)$ induces a faithful $\AT(Q_1,W_1)$-action on $D(X_1)$, where $\AT(Q_1,W_1)$ is the algebraic braid twist group associated with the quiver $(Q_1,W_1)$ in Figure~\ref{fig.quiver139(1)}.
\end{theorem}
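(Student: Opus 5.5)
The plan is to reduce faithfulness to the Qiu–Woolf / Nordskova–Volkov framework. We identify the spherical objects in $D(X_1)$ with a Dynkin-type configuration in a suitable Calabi–Yau-3 category, and then transport the known faithfulness result.

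\textbf{Step 1: the junior simplex and the spherical objects.} First, compute the junior simplex of $G=\mu_{13}$ with weights $(1,3,9)$ using Craw–Reid. This gives the exceptional divisors of $X_1$ as compact toric surfaces $E_i$ and their incidence pattern. Each sheaf $\cO_{E_i}$, or a suitable line bundle twist $\cO_{E_i}(D_i)$ chosen so that the object is spherical in the sense of Seidel–Thomas, is $3$-spherical, since $X_1$ is a crepant resolution and $E_i$ is a compact divisor. Compute $\dHomk(S_i,S_j)$ for the chosen objects $S_i$ via restriction of line bundles to the curves $E_i\cap E_j$ and the toric cohomology of $E_i$. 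These computations should show that the collection forms a $(Q_1,W_1)$-configuration in the sense of Definition~\ref{def.con}. Concretely:
\begin{itemize}
\item the $\mathrm{Ext}^1$ arrows reproduce $Q_1$ of Figure~\ref{fig.quiver139(1)};
\item the $A_\infty$/Massey products on the $\mathrm{Ext}$-algebra reproduce the potential $W_1$.
\end{itemize}
By the general theory of spherical twists (Seidel–Thomas, and the results of \cite{GM,Qiu} on $\AT(Q,W)$), the twists $T_{S_i}$ then satisfy the defining relations of $\AT(Q_1,W_1)$. This gives a homomorphism $\AT(Q_1,W_1)\to\Aut D(X_1)$.

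\textbf{Step 2: passing to a Dynkin configuration.} Next, use the fact stated in the introduction that $(Q_1,W_1)$ can be transformed by a sequence of mutations into a quiver of type~$D$ (here $D_6$, judging by the label of Theorem~\ref{thm.D6}). Under a mutation, the algebraic twist group is preserved: there is an isomorphism $\AT(Q_1,W_1)\cong \AT(Q',W')\cong\Br(D_6)$, and it matches generators with twists along the mutated objects. On the categorical side, mutating the configuration replaces some $S_i$ by twists $T_{S_j}^{\pm1}(S_i)$. These are again spherical and generate the same subgroup of autoequivalences. So the image of our action coincides with the image of a $D_6$-configuration of spherical objects, that is, one with $\mathrm{Ext}^1$ only along Dynkin edges and everything else vanishing.

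\textbf{Step 3: faithfulness.} Finally, we apply Nordskova–Volkov: a $D_6$-configuration of $3$-spherical objects in an enhanced triangulated category (here $D(X_1)$ with its dg enhancement) induces a faithful $\Br(D_6)$-action. An alternative route is to check that the configuration generates a subcategory equivalent to $D_{fd}(\Gamma_3 D_6)$ via Keller's intrinsic formality, and then invoke Qiu–Woolf. Composing with the isomorphism of Step 2 gives faithfulness of the $\AT(Q_1,W_1)$-action.

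I expect the main obstacle to be Step 1 together with the compatibility needed in Step 2. Determining the graded $\mathrm{Ext}$ groups between the surface sheaves requires care wherever three divisors meet or a divisor is non-minimal. The potential $W_1$, which is a higher product, also has to be identified correctly so that the mutation to type $D$ is legitimate. To handle this I would first compute the $\mathrm{Ext}$-quiver torically, and then pin down $W_1$ by checking that a sample mutated object $T_{S_j}(S_i)$ has the $\mathrm{Ext}$ groups predicted by the mutated quiver.
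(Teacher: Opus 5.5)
\textbf{There is a genuine gap: the triangle condition of the potential is never verified.} Your overall architecture matches the paper's: build a $(Q_1,W_1)$-configuration, pass by twists to a $D_6$-configuration, apply \cite[Theorem~1]{NV}, and identify the groups. But the step carrying all the content is left as a black box.

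You never verify the third condition of Definition~\ref{def.con}, namely $T_{\cE_k}\cE_l\in\cE_t^\perp$ for each of the four cycles $1\to2\to6$, $3\to4\to2$, $5\to6\to4$, $2\to6\to4$ of $W_1$. Instead you say the ``$A_\infty$/Massey products reproduce $W_1$'' and that ``general theory'' then gives the cycle relations $R_i=R_j$. That appeal does not work:
\begin{itemize}
\item Seidel--Thomas (Proposition~\ref{prop.dimandrelation}) only gives braid and commutation relations.
\item \cite{GM,Qiu} concern abstract groups or $D_{fd}(\Gamma_3(Q,W))$. Importing them into $D(X_1)$ would require determining the full $A_\infty$-structure on $\mathrm{Ext}^\bullet(\bigoplus\cE_i,\bigoplus\cE_i)$, and you give no way to do that.
\end{itemize}

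What is needed is much more elementary. Once the pairwise $\dHomk$ are known, the triangle $\cE_l\to T_{\cE_k}\cE_l\to\cE_k$ shows that $\dHomk(\cE_t,T_{\cE_k}\cE_l)=0$ is equivalent to the ordinary Yoneda product $\mathrm{Ext}^1(\cE_k,\cE_l)\otimes\mathrm{Ext}^1(\cE_t,\cE_k)\to\mathrm{Ext}^2(\cE_t,\cE_l)$ being nonzero. No higher products enter. This condition gives the cycle relation directly: $T_t$ fixes $T_k\cE_l\cong T_l^{-1}\cE_k[r]$, so $(T_tT_k)T_l(T_tT_k)^{-1}\cong T_l^{-1}T_kT_l$ (Proposition~\ref{prop.cycle}).

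\textbf{The same condition is silently assumed in your Step~2.} A ``mutated'' collection in $D(X_1)$ is not automatically a $D_6$-configuration. In the paper, each vanishing involving $\cF_2=T_4T_5T_2(\cE_6)$ or $\cF_5=T_2(\cE_3)$ is derived from one of the triangle conditions. For example, $\dHomk(T_2\cE_6,\cE_4)=0$ comes from the cycle $2\to6\to4$. Checking ``a sample'' mutated object is therefore not enough. All four triangles are needed, and the inner one is geometrically different from the three outer ones, which are related by the $\Z/3$-symmetry.

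\textbf{The missing geometric idea is how to compute these conditions.} Identify $T_{\cE_k}\cE_l$, the unique non-split extension of $\cE_k$ by $\cE_l$, with a line bundle on the reducible surface $S_k\cup S_l$. For instance, $T_3\cE_4\cong i_{34*}\cO_{S_3\cup S_4}$. For the inner triangle, $T_4\cE_2\cong i_{24*}\cO_{S_{24}}(C_{24}-C_{12}-2F_4-C_{45})$, which needs a toric linear equivalence on $S_4\cong\F_2(1)$. Then $\mathcal{H}om(\cE_t,T_{\cE_k}\cE_l)$ is a shifted line bundle of bidegree $(0,-1)$ on the nodal curve $C_{kt}\cup C_{lt}$, and it has no cohomology.

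\textbf{Your objects are also not pinned down, and the twist serves a different purpose than you state.} Every $i_*L$ is already spherical by Proposition~\ref{prop.sph} and Lemma~\ref{lemma.sph}; the twist is needed to get the right Hom dimensions. With $\cO_{S_1}$ and $\cO_{S_2}$ you get $\dHomk=\Gamma^\bullet(\cO_{\PP^1}(1))[-1]=\C^2[-1]$, because $C_{12}^2=1$ in $S_2$. Then $T_1,T_2$ generate a free group. The paper's choice $\cE_{2l-1}=i_*\cO_{S_{2l-1}}$, $\cE_{2l}=i_*\cO_{S_{2l}}(-C_{2l-1,2l})$ gives every adjacent pair one-dimensional $\dHomk$ simultaneously across all intersection curves.
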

Similarly,
\begin{theorem}[{Theorem~\ref{thm.E8}}]
	The collection of spherical twists associated with exceptional surfaces in $X_2\coloneqq X(1,3,13)$ induces a faithful $\AT(Q_2,W_2)$-action on $D(X_2)$, where $\AT(Q_2,W_2)$ is the algebraic braid twist group associated with the quiver $(Q_2,W_2)$ in Figure~\ref{fig.quiver1313(1)}.
\end{theorem}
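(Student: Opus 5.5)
The plan is to reduce faithfulness to the Dynkin case, where it is known, by passing through the Ginzburg dg algebra. The statement in question is the $X(1,3,13)$, type~E case; the $X(1,3,9)$ case would go the same way.

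\textbf{Step 1: the configuration.} First I will identify the compact exceptional divisors of $X_2=X(1,3,13)$ from the junior simplex of $\mu_{17}$ with weights $(1,3,13)$, using Craw--Reid. For each compact exceptional surface $E_i$ I take the spherical object $\cO_{E_i}$ (possibly twisted by a line bundle). It is $3$-spherical because $X_2$ is Calabi--Yau and $E_i$ is a compact divisor. Next I compute $\dHomk(\cO_{E_i},\cO_{E_j})$ using the toric geometry: for $i\neq j$ these are governed by the intersection curves $E_i\cap E_j$ and their normal bundles. From these I read off the quiver $Q_2$ in Figure~\ref{fig.quiver1313(1)}, with an arrow for each $\Hom^1$. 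The potential $W_2$ comes from the $A_\infty$ (Massey product) structure on triangles of intersecting divisors. This shows the objects form a $(Q_2,W_2)$-configuration in the sense of Definition~\ref{def.con}. By the general theory of~\cite{GM,Qiu}, the twists $T_{\cO_{E_i}}$ then satisfy the defining relations of $\AT(Q_2,W_2)$, which gives a homomorphism $\AT(Q_2,W_2)\to\Aut D(X_2)$.

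\textbf{Step 2: reduction to the Ginzburg algebra.} The collection $\{\cO_{E_i}\}$ generates a subcategory $\mathcal{D}$ that is equivalent to $D_{fd}(\Gamma_3(Q_2,W_2))$. This holds because the derived endomorphism $A_\infty$-algebra of $\bigoplus\cO_{E_i}$ is Koszul dual to $\Gamma_3(Q_2,W_2)$. Alternatively, one can restrict the equivalence $D(X_2)\cong D(\Gamma_3 Q_{\operatorname{McK}})$ to the thick subcategory generated by these objects. The twists preserve $\mathcal{D}$. So if the action is faithful on $\mathcal{D}$, it is faithful on $D(X_2)$: a twist word acting trivially on $D(X_2)$ acts trivially on $\mathcal{D}$.

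\textbf{Step 3: mutation to Dynkin type.} I then perform a sequence of quiver-with-potential mutations $\mu_{k_1},\dots,\mu_{k_m}$ turning $(Q_2,W_2)$ into an acyclic quiver of type $E$ with zero potential. Each mutation induces a derived equivalence of Ginzburg algebras (Keller--Yang), taking simples to simple-tilted objects. It also induces an isomorphism $\AT(Q,W)\cong\AT(\mu_k(Q,W))$ compatible with the twist actions, since the twist group generated by the spherical objects is unchanged; Qiu's results identify $\AT$ with the spherical twist group. After mutation the action becomes the standard $\Br(E)$-action on $D_{fd}(\Gamma_3 E)$, which is faithful by Qiu--Woolf~\cite{QW} (equivalently Nordskova--Volkov~\cite{NV}). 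Faithfulness then transfers back to $\AT(Q_2,W_2)$.

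\textbf{Main obstacle.} The hardest part will be Step~1, specifically pinning down the potential $W_2$. The quiver is routine from the Ext computations, but $W_2$ needs higher products, and these must match a potential that is mutation-equivalent to type~$E$. I would first try to reduce to local toric models of triples of divisors meeting at a point, where the cubic terms are visible, and then argue the potential is rigid, hence determined by $Q_2$. A second delicate point is checking that the $\AT$-isomorphisms under mutation are compatible with the identification of generators, so that the faithful Dynkin action really pulls back to the given twists.
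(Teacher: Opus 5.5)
Your Step~2 carries the whole argument, and it is not established. To identify the thick subcategory generated by the $\cE_i$ with $D_{fd}(\Gamma_3(Q_2,W_2))$, you need two things: a cyclic $A_\infty$ minimal model on $\operatorname{Ext}^*(\bigoplus_i\cE_i,\bigoplus_i\cE_i)$, and an identification of the induced potential with $W_2$ up to right equivalence. You postpone both as the ``main obstacle''.

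The fallback through $D(X_2)\cong D(\Gamma_3Q_{\operatorname{McK}})$ does not work. $Q_{\operatorname{McK}}$ has $17$ vertices, the images of the $\cE_i$ are not its simples, and nothing identifies the subcategory they generate with $D_{fd}$ of an $8$-vertex Ginzburg algebra.

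Step~1 has matching holes:
\begin{itemize}
\item The cycle relations $R_i=R_j$ do not follow from \cite{GM,Qiu} for an arbitrary collection of spherical objects. They require $T_{\cE_k}\cE_l\in\cE_t^\perp$ for every $3$-cycle $k\to l\to t\to k$ in $W_2$ (Definition~\ref{def.con}, Proposition~\ref{prop.cycle}). Given the degree pattern, this is equivalent to the Yoneda product $\Hom^1(\cE_t,\cE_k)\otimes\Hom^1(\cE_k,\cE_l)\to\Hom^2(\cE_t,\cE_l)$ being nonzero. That is exactly your cubic terms, and you give no way to compute them.
\item ``Possibly twisted by a line bundle'' hides an essential choice. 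With untwisted structure sheaves you do not get $Q_2$. For example, $\dHomk(\cO_{S_1},\cO_{S_2})\cong\Gamma^\bullet_{C_{12}}\cO_{\PP^1}(1)[-1]=\C^2[-1]$, so $T_1,T_2$ generate a free group. Also $\dHomk(\cO_{S_2},\cO_{S_6})=0$, because $C_{26}^2=-1$ in $S_6$. The paper uses $\cE_{2l}=i_{2l*}\cO_{S_{2l}}(-C_{2l-1,2l})$ and $\cE_t=i_{t*}\cO_{S_t}\otimes\cO(S_7+S_8)$ for $t=7,8$.
\end{itemize}

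The detour through Ginzburg algebras is also unnecessary, and dropping it removes the gap. \cite[Theorem~1]{NV} applies to any $E_8$-configuration of spherical objects in $D(X_2)$ itself, so the ``mutation'' can be done on objects. The paper keeps $\cE_1,\cE_3,\cE_4,\cE_5,\cE_7,\cE_8$ and replaces $\cE_2,\cE_6$ by $T_2(\cE_3)$ and $T_4T_5T_2(\cE_6)$. It checks the $E_8$ Hom-dimensions using only $T_k\cE_l\cong T_l^{-1}\cE_k[r]$ and the orthogonality conditions (Propositions~\ref{prop.D6} and~\ref{prop.E8}). It then recovers each $T_k$ from the $T_{\cF_j}$ via $T_{\Theta\cE}\cong\Theta T_\cE\Theta^{-1}$.

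What remains is precisely what your Step~1 leaves open. First, the Ext computations with the correct twists. Second, the orthogonality for the five triangles, which the paper proves sheaf-theoretically in Lemmas~\ref{lemma.extcon}, \ref{lemma.homexten} and~\ref{lemma.perpgen}. The central cycle $2\to6\to4\to2$ additionally needs the class $C_{24}\sim 2F_4+C_{45}-C_{46}$ on $S_4$ (Lemmas~\ref{lemma.divisorsim} and~\ref{lemma.F2F1}), since $C_{24}$ has self-intersection $-1$ in both $S_2$ and $S_4$.

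If you keep Step~3, you also need an explicit mutation sequence from $(Q_2,W_2)$ to $E_8$. You further need a group isomorphism $\AT(Q_2,W_2)\cong\Br(E_8)$ of Grant--Marsh type that matches the generators with the twists. The identification of $\AT$ with a twist group that you attribute to Qiu is proved in the marked-surface setting, which does not cover type~$E$.
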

\begin{figure}[htbp]
	\centering
	\begin{minipage}[b]{0.48\textwidth}
		\centering
		\begin{tikzcd}
			&& 1 \\
			& 2 && 6 \\
			3 && 4 && 5
			\arrow["a_1"{description}, from=1-3, to=2-2]
			\arrow["b_1"{description}, from=2-2, to=2-4]
			\arrow["c_2"{description}, from=2-2, to=3-1]
			\arrow["c_1"{description}, from=2-4, to=1-3]
			\arrow["b_3"{description}, from=2-4, to=3-3]
			\arrow["a_2"{description}, from=3-1, to=3-3]
			\arrow["b_2"{description}, from=3-3, to=2-2]
			\arrow["c_3"{description}, from=3-3, to=3-5]
			\arrow["a_3"{description}, from=3-5, to=2-4]
		\end{tikzcd}
		\caption{$(Q_1, W_1)$ associated with $X(1,3,9)$.}
		\label{fig.quiver139(1)}
	\end{minipage}
	\hfill
	\begin{minipage}[b]{0.48\textwidth}
		\centering
		\begin{tikzcd}
			&& 1 && 8 \\
			& 2 && 6 && 7 \\
			3 && 4 && 5
			\arrow["{a_1}"{description}, from=1-3, to=2-2]
			\arrow[from=1-5, to=1-3]
			\arrow["{b_1}"{description}, from=2-2, to=2-4]
			\arrow["{c_2}"{description}, from=2-2, to=3-1]
			\arrow["{c_1}"{description}, from=2-4, to=1-3]
			\arrow["{f_1}"{description}, from=2-4, to=2-6]
			\arrow["{b_3}"{description}, from=2-4, to=3-3]
			\arrow["{f_2}"{description}, from=2-6, to=3-5]
			\arrow["{a_2}"{description}, from=3-1, to=3-3]
			\arrow["{b_2}"{description}, from=3-3, to=2-2]
			\arrow["{c_3}"{description}, from=3-3, to=3-5]
			\arrow["{a_3}"{description}, from=3-5, to=2-4]
		\end{tikzcd}
		\caption{$(Q_2, W_2)$ associated with $X(1,3,13)$.}
		\label{fig.quiver1313(1)}
	\end{minipage}
\end{figure}

We sketch the idea of the proof. Generalizing the $A_n$-configuration from~\cite{ST}, we define a $(Q,W)$-configuration in $D(X)$ with an additional condition:
\begin{itemize}
	\item $T_{\cE_k}\cE_l \in \cE_t^\perp$ if there is a cycle $k \to l \to t \to k$ in $W$, where $T_{\cE_k}$ is the spherical twist associated with $\cE_k$ and $\cE_t^\perp \coloneqq {\mathcal{A} \in D(X) \mid \Hom^\bullet(\cE_t, \mathcal{A}) = 0}$.
\end{itemize}
We show that a $(Q,W)$-configuration induces an $\AT(Q,W)$-action on $D(X)$ when $X$ is Calabi–Yau, using the same method as in~\cite{DZ}.

To prove the existence of a $(Q_i,W_i)$-configuration in $D(X_i)$ for $i=1,2$, we first show that the push-forwards of certain line bundles on the exceptional surfaces $S_k \subset X_i$ are spherical. We then demonstrate in detail that the collection 
\[\{\cE_k\mid \; S_k\text{ is an irreducible exceptional surface in }X_i\}\] 
that we choose forms a $(Q_i,W_i)$-configuration and induces an $\AT(Q,W)$-action on $D(X_i)$. The methods we use to prove $T_{\cE_k}\cE_l \in \cE_t^\perp$ for a cycle $k \to l \to t \to k$ in $W_i$ differ from those in~\cite{DZ}, because not all exceptional surface pairs $(S_k, S_l)$ intersect in a fiber of $S_k$ or $S_l$; some intersections are curves with non-zero self-intersection number, requiring a more detailed analysis.

Finally, we show that the $(Q_1,W_1)$-configuration can be transformed into a $(D_6,0)$-configuration in $D(X_1)$ via elements of $\operatorname{AT}(Q_1,W_1)$. Faithfulness then follows from~\cite[Theorem~1]{NV}, and the transformation induces a group isomorphism $\operatorname{AT}(Q_1,W_1) \cong \operatorname{Br}(D_6)$. Similarly, we show that the $(Q_2,W_2)$-configuration can be transformed into an $(E_8,0)$-configuration, yielding faithfulness via~\cite[Theorem~1]{NV} and a group isomorphism $\operatorname{AT}(Q_2,W_2) \cong \operatorname{Br}(E_8)$.

\subsection{Contents of the paper}

In Section~\ref{sec.sph}, we review spherical objects in $D(X)$, their associated twists, and algebraic braid twist groups. We define $(Q,W)$-configurations and show they induce algebraic braid twist group actions on Calabi–Yau varieties.
In Section~\ref{sec.cal}, we compute $\Hom$ spaces associated with Cartier divisors and present detailed results for Calabi–Yau 3-folds, which are useful for later proofs.
In Section~\ref{sec.toric}, we briefly recall toric varieties via fans and orbits, construct toric line bundles from Cartier divisors, and describe Hirzebruch surfaces as toric varieties.
In Section~\ref{sec.exc}, we explain the exceptional surfaces in $X(1,s,r-s-1)$ and their intersections, illustrating with the example $X(1,3,9)$.
In Section~\ref{sec.139}, we prove our first main theorem for $X(1,3,9)$.
In Section~\ref{sec.1313}, we first describe the exceptional surfaces and their intersections in $X(1,3,13)$ and then prove our second main theorem.
Together, these sections construct the required $(Q,W)$-configurations, verify their properties, and establish the faithful braid group actions, thereby proving our main theorems.

\section{Spherical objects and braid groups}\label{sec.sph}
We recall the theory of spherical twists from~\cite{ST} for a smooth projective variety $X$ of dimension~$d$.

	\subsection{Spherical objects}
	
	\begin{definition}\label{definition.sph} An object $\mathcal{E}\in D(X)$ is \emph{spherical} if 
		\begin{itemize}
			\item 
			$\dHomk(\mathcal{E},\mathcal{E})=\mathbb{C}\oplus\mathbb{C}[-d]$
			and 
			\item $\mathcal{E}\otimes\omega_X\cong\mathcal{E}$
		\end{itemize}
		where $\dHomk(\mathcal{A},\mathcal{B})\coloneqq \bigoplus_{r\in\Z}\Hom_{D(X)}(\mathcal{A},\mathcal{B}[r])\cong \Gamma_X^\bullet(\mathcal{H}om_X(\mathcal{A},\mathcal{B}))$.
		\end{definition}
	
	For any $\mathcal{F} \in D(X)$, one can define the \emph{spherical twist} $T_{\mathcal{F}}\colon D(X)\to D(X)$ as in~\cite[Definition~8.3]{Huy}. Moreover, by~\cite[Exercise~8.5]{Huy}, for any spherical object $\mathcal{E}$, the action of the spherical twist on objects can be expressed as
	\[
	T_{\mathcal{E}}(\mathcal{A}) \cong
	\operatorname{Cone}\Bigl(
	\mathcal{E} \otimes \dHomk(\mathcal{E}, \mathcal{A})
	\longrightarrow \mathcal{A}
	\Bigr),
	\]
	where the morphism is the natural evaluation map.  
	If $\mathcal{E}$ is spherical, then $T_{\mathcal{E}}$ is an exact autoequivalence of $D(X)$~\cite[Proposition~8.6]{Huy}.
	
	\begin{proposition}\label{prop.dimandrelation}
		For two non-isomorphic spherical objects $\mathcal{E},\mathcal{F}\in D(X)$, let 
		$h=\dim_{\mathbb{C}}\dHomk(\mathcal{E},\mathcal{F})$, up to shifts. Then the following hold:
		\begin{itemize}
			\item If $h=0$, then $T_{\mathcal{E}}T_{\mathcal{F}}\cong T_{\mathcal{F}}T_{\mathcal{E}}$.
			\item If $h=1$, then  $T_{\mathcal{E}}T_{\mathcal{F}}T_{\mathcal{E}}
			\cong T_{\mathcal{F}}T_{\mathcal{E}}T_{\mathcal{F}}$ (the Artin braid relation).
			\item If $h\ge2$, then  
			$\langle T_{\mathcal{E}},T_{\mathcal{F}}\rangle$ forms a free subgroup of $\Aut\, D(X)$~\cite[Theorem~1.1]{Keat}.
		\end{itemize}
	\end{proposition}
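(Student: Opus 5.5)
The plan is to treat the three cases separately. For $h=0$ and $h=1$ I will use the standard Seidel--Thomas arguments. For $h\ge 2$ I will simply cite Keating. The basic tool throughout is the conjugation formula
\[
\Phi\, T_{\mathcal{E}}\,\Phi^{-1}\cong T_{\Phi(\mathcal{E})},
\]
valid for any exact autoequivalence $\Phi$ of $D(X)$. It follows from the Fourier--Mukai description of $T_{\mathcal{E}}$ as the cone of the evaluation kernel $\mathcal{E}^\vee\boxtimes\mathcal{E}\to\mathcal{O}_\Delta$, or equivalently from the functorial cone description recalled after Definition~\ref{definition.sph}.

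\emph{Case $h=0$.} Since $\dHomk(\mathcal{E},\mathcal{F})=0$, the spherical-object formula gives $T_{\mathcal{E}}(\mathcal{F})\cong\operatorname{Cone}(0\to\mathcal{F})\cong\mathcal{F}$. The conjugation formula then yields $T_{\mathcal{E}}T_{\mathcal{F}}T_{\mathcal{E}}^{-1}\cong T_{T_{\mathcal{E}}\mathcal{F}}\cong T_{\mathcal{F}}$, which is the commutation relation. Note that Serre duality together with $\mathcal{E}\otimes\omega_X\cong\mathcal{E}$ gives $\dHomk(\mathcal{F},\mathcal{E})\cong\dHomk(\mathcal{E},\mathcal{F})^\vee[-d]$, so the condition is symmetric in $\mathcal{E}$ and $\mathcal{F}$.

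\emph{Case $h=1$.} Up to shift we may assume $\dHomk(\mathcal{E},\mathcal{F})=\mathbb{C}$ in degree $0$. The key identity is
\[
T_{\mathcal{E}}T_{\mathcal{F}}(\mathcal{E})\cong\mathcal{F}
\]
up to shift, which is the computation in \cite[Lemma~2.11]{ST}. To obtain it, first note that $T_{\mathcal{F}}(\mathcal{E})$ is the cone $C$ of $\mathcal{F}\otimes\dHomk(\mathcal{F},\mathcal{E})\to\mathcal{E}$. By Serre duality $\dHomk(\mathcal{F},\mathcal{E})$ is one-dimensional in degree $d$, so $C$ is a nontrivial extension of $\mathcal{E}$ by a shift of $\mathcal{F}$. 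Next, apply $\dHomk(\mathcal{E},-)$ to the defining triangle of $C$. The long exact sequence shows that $\dHomk(\mathcal{E},C)$ is one-dimensional: the map $\dHomk(\mathcal{E},\mathcal{F}\otimes\dHomk(\mathcal{F},\mathcal{E}))\to\dHomk(\mathcal{E},\mathcal{E})$ is nonzero, because composition $\Hom(\mathcal{E},\mathcal{F})\otimes\Hom(\mathcal{F},\mathcal{E}[d])\to\Hom(\mathcal{E},\mathcal{E}[d])$ is a perfect pairing by Serre duality. This kills the degree-$d$ class and leaves a single class, namely the degree-$0$ identity, lifted to $C$. Finally, applying $T_{\mathcal{E}}$ cones off this single copy of $\mathcal{E}$ from $C$, and the octahedral axiom identifies the result with $\mathcal{F}$ up to shift.

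Granting the identity, set $\Phi=T_{\mathcal{E}}T_{\mathcal{F}}$. Then $\Phi T_{\mathcal{E}}\Phi^{-1}\cong T_{\Phi(\mathcal{E})}\cong T_{\mathcal{F}}$, because spherical twists are insensitive to shifts. Rearranging gives $T_{\mathcal{E}}T_{\mathcal{F}}T_{\mathcal{E}}\cong T_{\mathcal{F}}T_{\mathcal{E}}T_{\mathcal{F}}$.

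\emph{Case $h\ge 2$.} Here I would simply invoke \cite[Theorem~1.1]{Keat}, checking only that its hypotheses match: two spherical objects in a triangulated category with a dg enhancement whose total morphism space has dimension at least $2$.

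I expect the main obstacle to be the octahedral bookkeeping in the $h=1$ case, specifically checking that the relevant evaluation map is nonzero. That nonvanishing is exactly where the perfect Serre-duality pairing, and hence the Calabi--Yau-type condition $\mathcal{E}\otimes\omega_X\cong\mathcal{E}$, is used.
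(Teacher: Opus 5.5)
Your proposal is correct. The paper gives no proof of this proposition; it recalls it as standard and attributes only the free-group case to \cite{Keat}. Your argument for the first two cases is the standard Seidel--Thomas one that the paper relies on implicitly: the identity $T_{\mathcal{E}}T_{\mathcal{F}}\mathcal{E}\cong\mathcal{F}[r]$ combined with the conjugation formula is the same mechanism used later in Lemma~\ref{lemma.self} and Proposition~\ref{prop.cycle}. One small simplification: you do not need the octahedral axiom at the end. The generator of $\dHomk(\mathcal{E},C)$ is the map $\mathcal{E}\to C$ from the defining triangle $\mathcal{F}[-d]\to\mathcal{E}\to C$, so by rotation its cone is $\mathcal{F}[1-d]$.
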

	
	\begin{lemma}\label{lemma.sph}
		Assume $\cE$ is a spherical object in $D(X)$, and $\cF$ is a line bundle on $X$. Then 
		\begin{itemize}
			\item[(1)] $\cF\otimes\cE$ is also spherical in $D(X)$;
			\item[(2)] For any $\mathcal{A}\in D(X)$ and any integer $r$, $T_{\cE\otimes\cF[r]}(\mathcal{A}\otimes \cF)\cong T_\cE(\mathcal{A})\otimes \cF$.
		\end{itemize}
	\end{lemma}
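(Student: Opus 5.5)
For part (1) I will check the two conditions of Definition~\ref{definition.sph} for $\cF\otimes\cE$ directly, using that $\cF$ is invertible. First, tensoring with the line bundle $\cF$ is an autoequivalence of $D(X)$ with inverse $-\otimes\cF^{\vee}$. Hence
\[
\dHomk(\cF\otimes\cE,\cF\otimes\cE)\cong\dHomk(\cE,\cE)\cong\C\oplus\C[-d].
\]
Equivalently, $\mathcal{H}om(\cF\otimes\cE,\cF\otimes\cE)\cong\mathcal{H}om(\cE,\cE)$ because $\cF^\vee\otimes\cF\cong\cO_X$, and one then applies $\Gamma^\bullet_X$. Second,
\[
(\cF\otimes\cE)\otimes\omega_X\cong\cF\otimes(\cE\otimes\omega_X)\cong\cF\otimes\cE,
\]
using the given isomorphism $\cE\otimes\omega_X\cong\cE$. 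This proves (1).

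For part (2) I will use the cone description of the twist. By (1), $\cE\otimes\cF$ is spherical, and so is its shift $\cE\otimes\cF[r]$. The plan has two steps.

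\begin{itemize}
\item[(a)] The twist does not depend on the shift. We have $\dHomk(\cE[r],\mathcal{B})\cong\dHomk(\cE,\mathcal{B})[-r]$, so
\[
\cE[r]\otimes\dHomk(\cE[r],\mathcal{B})\cong\cE\otimes\dHomk(\cE,\mathcal{B}),
\]
and this isomorphism carries one evaluation map to the other. Taking cones gives $T_{\cE[r]}\cong T_\cE$ on objects, and functorially via the Fourier–Mukai kernel $\operatorname{Cone}(\cE^\vee\boxtimes\cE\to\cO_\Delta)$, which is unchanged under $\cE\mapsto\cE[r]$.

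\item[(b)] The case $r=0$. Since $-\otimes\cF$ is an equivalence,
\[
\dHomk(\cE\otimes\cF,\mathcal{A}\otimes\cF)\cong\dHomk(\cE,\mathcal{A})
\]
as graded vector spaces. I then check that the evaluation map $(\cE\otimes\cF)\otimes\dHomk(\cE,\mathcal{A})\to\mathcal{A}\otimes\cF$ is the image under $-\otimes\cF$ of the evaluation map $\cE\otimes\dHomk(\cE,\mathcal{A})\to\mathcal{A}$. Exact functors preserve cones, so
\[
T_{\cE\otimes\cF}(\mathcal{A}\otimes\cF)\cong\operatorname{Cone}(\cdots)\otimes\cF\cong T_\cE(\mathcal{A})\otimes\cF.
\]
\end{itemize}

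For a statement at the level of functors, I would phrase (b) as the conjugation identity $(-\otimes\cF)\circ T_\cE\circ(-\otimes\cF^\vee)\cong T_{\cE\otimes\cF}$. This follows from the general fact that $\Phi\circ T_\cE\circ\Phi^{-1}\cong T_{\Phi(\cE)}$ for any autoequivalence $\Phi$ that is Fourier–Mukai, which is the case here. The claim then follows by applying both sides to $\mathcal{A}\otimes\cF$.

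The only slightly delicate point is that cones in a triangulated category are unique only up to non-canonical isomorphism. The object-level statement is therefore fine as written. If functoriality were needed, I would go through the kernel description, where tensoring with $\cF$ corresponds to convolving with $\cO_\Delta\otimes\cF$ and the identities are isomorphisms of kernels. I expect no real obstacle beyond this bookkeeping.
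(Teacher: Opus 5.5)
Your proposal is correct and follows essentially the same route as the paper. For (1), both arguments use $\mathcal{H}om(\cF\otimes\cE,\cF\otimes\cE)\cong\mathcal{H}om(\cE,\cE)$ (via $\cF^\vee\otimes\cF\cong\cO_X$) together with $\cF\otimes\cE\otimes\omega_X\cong\cF\otimes\cE$. For (2), both use the cone formula, the identification $\dHomk(\cE\otimes\cF[r],\mathcal{A}\otimes\cF)\cong\dHomk(\cE,\mathcal{A})[-r]$, and the fact that $-\otimes\cF$ commutes with cones; your split into a shift-invariance step and the $r=0$ case, and your check that the evaluation maps correspond, simply spell out what the paper does in one chain of isomorphisms.
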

	\begin{proof}
		By the definition of spherical object,  $\cF\otimes\cE\otimes\omega_{X}\cong\cF\otimes\cE$. Then (1) is proved by
		
		\[\dHomk(\cF\otimes\cE,\cF\otimes\cE)\cong\Gamma_X^\bullet(\mathcal{H}om_X(\cE,\cE)\otimes\cF^\vee\otimes\cF)\cong\dHomk(\cE,\cE)=\C\oplus\C[- d],\]
		where the first isomorphism uses~\cite[III, Proposition~6.7]{Har} and the last isomorphism uses $\cF^\vee\otimes\cF\cong\cO_X$.
		
		Since \(\dHomk(\cE\otimes\cF[r],\, \mathcal{A}\otimes\cF)=\dHomk(\cE,\, \mathcal{A})[-r]\), (2) is given by
		\begin{align*}
			T_{\cE\otimes\cF[r]}(\mathcal{A}\otimes\cF)
			&=\mathrm{Cone}\{\cE\otimes\cF[r]\otimes \dHomk(\cE\otimes\cF[r],\, \mathcal{A}\otimes\cF)\to\mathcal{A}\otimes\cF  \}\\
			&\cong \mathrm{Cone}\{\cE\otimes\cF\otimes \dHomk(\cE,\, \mathcal{A})\to\mathcal{A}\otimes\cF  \}\\
			&\cong \mathrm{Cone}\{\cE\otimes \dHomk(\cE,\, \mathcal{A})\to\mathcal{A} \}\otimes\cF\\
			&\cong T_{\cE}\mathcal{A}\otimes \cF
		\end{align*}
	\end{proof}
	
	Similar to the definition of $A_n$-configuration in~\cite{ST}, we can define a $(Q,W)$-configuration.
	\begin{definition}\label{def.con}
		Suppose that $(Q,W)$ is a quiver with potential such that there is at most one arrow between any two vertices and the length of any cycle in $W$ is $3$. An \emph{$(Q,W)$-configuration} in $D(X)$ is a collection of spherical objects 
		$\{\mathcal{E}_k\}_{k\in Q_0}$ satisfying the following conditions:
		\begin{itemize}
			\item $\dim_{\mathbb{C}}\dHomk(\mathcal{E}_i,\mathcal{E}_j)=1$, if there is an arrow between $i$ and $j$;
			\item $\dim_{\mathbb{C}}\dHomk(\mathcal{E}_i,\mathcal{E}_j)=0$, if there is no arrow between $i$ and $j$;
			\item $T_{\cE_k}\cE_l\in\cE_t^\perp$ if there is a cycle $k\to l\to t\to k$ in $W$, where  $T_{\cE_k}$ is the spherical twist associated with $\cE_k$ and $\cE_t^\perp \coloneqq  \{\mathcal{A}\in D(X)\mid \dHomk(\cE_t,\,\mathcal{A})=0\}$.
		\end{itemize}
	\end{definition}
	
	Denote $T_i\coloneqq T_{\cE_i}$  the spherical twist associated with the spherical object $\cE_i$.
	
	The orthogonality condition in Definition~\ref{def.con} is independent of the choice of vertex in a 3-cycle, as shown by the following lemma.
	\begin{lemma}\label{lemma.change}
		Assume $\{\cE_i,\cE_j\}$ is an $A_2$-configuration for any $1\leq i\neq j\leq3$ and $T_1\cE_2\in\cE_3^\perp$. Then $T_2\cE_3\in\cE_1^\perp$ and $T_3\cE_1\in\cE_2^\perp$
	\end{lemma}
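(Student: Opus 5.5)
The idea is to move the hypothesis around the 3-cycle using two standard facts about an $A_2$-configuration $\{\cE_a,\cE_b\}$, namely
\[
T_{\cE_b}T_{\cE_a}\cE_b\cong \cE_a[s] \quad\text{for some } s\in\Z \qquad (\ast)
\]
(the $A_2$ case of the Seidel–Thomas relation, cf.~\cite{ST} and~\cite[Ch.~8]{Huy}), and Serre duality for spherical objects. I will show the implication
\[
T_1\cE_2\in\cE_3^\perp \;\Longrightarrow\; T_2\cE_3\in\cE_1^\perp .
\]
Since the hypotheses of the lemma are symmetric under the cyclic relabelling $1\to2\to3\to1$, applying the same implication to the triple $(2,3,1)$ then gives $T_3\cE_1\in\cE_2^\perp$.

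\textbf{Step 1: rewrite $T_1\cE_2$.} By $(\ast)$ with $a=1$, $b=2$, we have $T_2T_1\cE_2\cong\cE_1[s]$, that is, $T_1\cE_2\cong T_2^{-1}\cE_1[s]$. If I want to be self-contained, $(\ast)$ can be checked directly. Write $\dHomk(\cE_1,\cE_2)=\C[-m]$, so that $T_1\cE_2=\operatorname{Cone}(\cE_1[-m]\to\cE_2)$. Applying $\dHomk(\cE_2,-)$ and using sphericity of $\cE_2$ together with Serre duality, one finds $\dHomk(\cE_2,T_1\cE_2)$ is one-dimensional. Applying $T_2$ to the defining triangle then produces $\cE_1$ up to shift.

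\textbf{Step 2: transfer the vanishing to $T_2\cE_3$.} The hypothesis and Step 1 give
\[
0=\dHomk(\cE_3,T_1\cE_2)\cong\dHomk(\cE_3,T_2^{-1}\cE_1)[s]\cong\dHomk(T_2\cE_3,\cE_1)[s],
\]
where the last isomorphism holds because $T_2$ is an autoequivalence. Now use that $\cE_1$ is spherical, so $\cE_1\otimes\omega_X\cong\cE_1$. Serre duality (for objects with compact support) then gives
\[
\dHomk(\cE_1,T_2\cE_3)\cong\dHomk(T_2\cE_3,\cE_1\otimes\omega_X[d])^\vee\cong\dHomk(T_2\cE_3,\cE_1)^\vee[-d]=0 .
\]
Hence $T_2\cE_3\in\cE_1^\perp$.

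\textbf{Step 3: cycle.} Relabel $(1,2,3)\mapsto(2,3,1)$ and repeat Steps 1–2. This gives $T_3\cE_1\in\cE_2^\perp$.

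\textbf{Main obstacle.} The only delicate point is the validity of Serre duality in Step 2. The paper states the setup for projective $X$, but the intended applications are quasi-projective crepant resolutions. I would handle this by observing that the spherical objects in question have proper support, so $\dHomk(\cE_1,-)$ and $\dHomk(-,\cE_1)$ satisfy Serre duality with respect to $\omega_X[d]$. Everything else is shift bookkeeping, and shifts are irrelevant for the vanishing of a graded $\dHomk$.
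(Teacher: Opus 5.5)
Your proof is correct and is essentially the paper's. Both arguments combine $T_i\cE_{i+1}\cong T_{i+1}^{-1}\cE_i[r_i]$ from \cite[Lemma~2.3(1)]{DZ}, moving the twist across $\dHomk$ via the autoequivalence, and Serre duality, which the paper leaves implicit when it passes from $\dHomk(T_1\cE_2,\cE_3)$ to the hypothesis. Both then get $T_3\cE_1\in\cE_2^\perp$ by feeding the first conclusion back in, which is exactly your cyclic relabelling.

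One small point about your optional self-contained check of $(\ast)$: applying $T_2$ to the defining triangle is not the clean step. Instead, since $\dHomk(\cE_2,T_1\cE_2)=\C$, the evaluation map computing $T_2(T_1\cE_2)$ is a nonzero multiple of the map $\cE_2\to T_1\cE_2$ in that triangle, and the cone of that map is $\cE_1[1-m]$.
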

	\begin{proof}
		Since $\{\cE_i,\cE_{i+1}\}$ is an $A_2$-configuration, by~\cite[Lemma~2.3(1)]{DZ}, there is an integer $r_i$ such that $T_i\cE_{i+1}\cong T_{i+1}^{-1}\cE_{i}[r_i]$ with indices taken modulo $3$, for $i=1,2,3$.
		Since $T_i\in \Aut\,D(X)$, we have 
		\[\dHomk(\cE_1,T_2\cE_3)\cong\dHomk(T_2^{-1}\cE_1,\cE_3)\cong\dHomk(T_1\cE_2,\cE_3)[r_1]=0; \]
		and 
		\[\dHomk(\cE_2,T_3\cE_1)\cong \dHomk(T_3^{-1}\cE_2,\cE_1)\cong\dHomk(T_2\cE_3,\cE_1)[r_2]=0. \]
		Thus, $T_2\cE_3\in\cE_1^\perp$ and $T_3\cE_1\in\cE_2^\perp$.
	\end{proof}
	
	When $(Q,W)=(A_n,0)$, the definition is compatible with the definition of $A_n$-configuration in~\cite[Definition~1.1(b)]{ST}. Moreover, an $A_n$-configuration induces a faithful action of the braid group $\Br_{n+1}$ on $D(X)$ via spherical twists $T_i\coloneqq T_{\cE_i}$~\cite[Theorem~1.3]{ST}, when $d\geq 2$.
	
	When $(Q,W)=(\Gamma,0)$, where $\Gamma$ is an ADE-type Dynkin quiver, the definition is compatible with the definition of $\Gamma$-configuration in~\cite[Definition~2.2]{NV}. Moreover, a $\Gamma$-configuration induces a faithful action of the braid group $\Br(\Gamma)$ on $D(X)$ via spherical twists $T_i\coloneqq T_{\cE_i}$~\cite[Theorem~1]{NV}, when $d\neq 1$.
	
	For a quiver with potential $(Q,W)$, Qiu~\cite{Qiu} and Grant-Marsh~\cite{GM} also define an algebraic braid twist group associated with $(Q,W)$, which is compatible with $\AT(A_n,0)=\Br(A_n)=\Br_{n+1}$.
	
	\begin{definition}[Definition~10.1\cite{Qiu}]
		Suppose that $(Q,W)$ is a quiver with potential such that there is at most one arrow between any two vertices. The algebraic braid twist group $\AT(Q,W)$ is defined by the following presentation
		\begin{itemize}
			\item generators $\beta_i$ for $i\in Q_0$;
			\item there is a relation $Br(\beta_i,\beta_j)$ if there is an arrow between $i,j$; otherwise there is a relation $Co(\beta_i,\beta_j)$;
			\item there are relations $R_i=R_j$ for any $i\neq j$, if there is a cycle \(Y\colon 1\to2\to\cdots\to m\to 1\) in $W$, where $R_i=\beta_i\beta_{i+1}\cdots\beta_{2m+i-3}$ indices are taken modulo $m$.
		\end{itemize}
	\end{definition}
	
	Similarly, a \emph{$(Q,W)$-configuration} in $D(X)$ induces an $\AT(Q,W)$-action on $D(X)$, when $X$ is Calabi-Yau.

	\subsection{Spherical objects in Calabi-Yau varieties}
	
	Now, we assume $X$ is Calabi-Yau, so that $\omega_{X}$ is trivial. 
	
	\begin{lemma}\label{lemma.equivalence}
		Assume $\Theta\in \Aut\, D(X)$ and $\cE$ is spherical in $D(X)$. Then 
		\begin{itemize}
			\item[(1)] $\Theta(\cE)$ is spherical in $D(X)$,
			\item[(2)] $T_{\Theta(\cE)}\cong \Theta T_{\cE} \Theta^{-1}$.
		\end{itemize}
	\end{lemma}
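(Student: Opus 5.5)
For part (1) I would use only that $\Theta$ is an exact autoequivalence. Full faithfulness gives $\dHomk(\Theta\cE,\Theta\cE)\cong\dHomk(\cE,\cE)=\C\oplus\C[-d]$, since $\Theta$ commutes with shifts. For the second condition in Definition~\ref{definition.sph}, I note that $X$ is Calabi--Yau, so $\omega_X\cong\cO_X$. Hence $\Theta(\cE)\otimes\omega_X\cong\Theta(\cE)$ holds automatically, and $\Theta(\cE)$ is spherical.

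For part (2) I would compare the two functors on objects first, using the cone formula for $T_{\cE}$ from~\cite[Exercise~8.5]{Huy}. For $\mathcal{A}\in D(X)$, set $\mathcal{B}=\Theta^{-1}\mathcal{A}$. Applying the exact functor $\Theta$ to the triangle
\[\cE\otimes\dHomk(\cE,\mathcal{B})\to\mathcal{B}\to T_\cE\mathcal{B}\]
gives a triangle
\[\Theta(\cE)\otimes\dHomk(\cE,\mathcal{B})\to\mathcal{A}\to\Theta T_\cE\Theta^{-1}\mathcal{A}.\]
Here $\Theta$ commutes with tensoring by a graded vector space, because that is a finite direct sum of shifts. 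Full faithfulness identifies $\dHomk(\cE,\mathcal{B})\cong\dHomk(\Theta\cE,\mathcal{A})$, and under this identification $\Theta$ sends the evaluation map to the evaluation map. So the third term is isomorphic to $\operatorname{Cone}(\Theta\cE\otimes\dHomk(\Theta\cE,\mathcal{A})\to\mathcal{A})=T_{\Theta\cE}\mathcal{A}$.

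The main obstacle is upgrading this objectwise isomorphism to an isomorphism of functors, since cones are not functorial. I would handle it with the Fourier--Mukai description. By Orlov's theorem, $\Theta=\Phi_P$ for a kernel $P$ on $X\times X$. The twist $T_\cE$ has kernel $\operatorname{Cone}(\cE^\vee\boxtimes\cE\to\cO_\Delta)$ (\cite[Definition~8.3]{Huy}). The functor $\Theta T_\cE\Theta^{-1}$ then has kernel the convolution $P\circ\operatorname{Cone}(\cE^\vee\boxtimes\cE\to\cO_\Delta)\circ P^{-1}$. Convolution is exact in each variable, so this is the cone of $P\circ(\cE^\vee\boxtimes\cE)\circ P^{-1}\to P\circ P^{-1}\cong\cO_\Delta$.

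To finish I would check two things. First, $P\circ(\cE^\vee\boxtimes\cE)\circ P^{-1}\cong(\Theta\cE)^\vee\boxtimes\Theta\cE$; the left factor uses that the adjoint of $\Theta^{-1}$ is $\Theta$, together with Serre duality and $\omega_X$ trivial. Second, the map is the trace/evaluation map. The resulting kernel is exactly that of $T_{\Theta\cE}$, as in~\cite[Lemma~8.21]{Huy}. If the kernel bookkeeping gets delicate, I would fall back on citing that lemma directly.
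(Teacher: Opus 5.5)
Your part (1) and the object-level half of part (2) are the paper's argument. The paper uses $\dHomk(\Theta(\cE),\mathcal{A})\cong\dHomk(\cE,\Theta^{-1}(\mathcal{A}))$ to build a morphism of exact triangles, concludes $T_{\Theta(\cE)}(\mathcal{A})\cong\Theta T_{\cE}\Theta^{-1}(\mathcal{A})$ for each $\mathcal{A}$, and declares (2) proved.

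The genuine difference is your second step. You recognize that the triangle argument gives only an objectwise isomorphism, since cones are not functorial. The later uses of the lemma need more: $(T_3T_1)T_2(T_3T_1)^{-1}\cong T_{(T_3T_1)\cE_2}$ in Proposition~\ref{prop.cycle}, and the formulas for $T'_k$ in the faithfulness proofs, are relations in $\Aut\, D(X)$ and require an isomorphism of functors. Your Fourier--Mukai upgrade, or Huybrechts' Lemma~8.21, supplies exactly that. So your route is longer than the paper's, but it proves the statement in the form the paper actually needs.

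The kernel bookkeeping you worry about is mild. We have $\Hom_{X\times X}((\Theta\cE)^\vee\boxtimes\Theta\cE,\cO_\Delta)\cong\Hom(\Theta\cE,\Theta\cE)\cong\C$, so the convolved map is a scalar multiple of the trace map. It is nonzero, because the natural transformation it induces at the object $\Theta\cE$ is $\Theta$ applied to the nonzero evaluation map $\cE\otimes\dHomk(\cE,\cE)\to\cE$. Hence its cone is the kernel of $T_{\Theta\cE}$.

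One caveat: Orlov's theorem needs $X$ smooth projective, which is the standing assumption of Section~\ref{sec.sph}. The paper later applies the lemma on the quasi-projective $X(1,3,9)$ and $X(1,3,13)$. There, every $\Theta$ that occurs is a composite of spherical twists, so the kernel argument should be run with their explicit kernels rather than through Orlov's theorem.
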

	\begin{proof}
		Due to $\Theta\in \Aut\, D(X)$, we have $\dHomk(\Theta(\cE),\Theta(\cE))\cong\dHomk(\cE,\cE)=\C\oplus\C[-d]$. Then (1) is given by Calabi-Yau $X$.
		
		In addition, $\dHomk(\Theta(\cE),\mathcal{A})\cong \dHomk(\cE,\Theta^{-1}(\mathcal{A}))$ induces the following commutative diagram between two exact triangles in $D(X)$
		\[
		\begin{tikzcd}
			{\Theta(\cE)\otimes\dHomk(\Theta(\cE),\mathcal{A})} && {\mathcal{A}} && {T_{\Theta(\cE)}(\mathcal{A})} \\
			{\Theta(\cE\otimes\dHomk(\cE,\Theta^{-1}(\mathcal{A})))} && {\Theta\circ\Theta^{-1}(\mathcal{A})} && {\Theta T_{\cE}(\Theta^{-1}(\mathcal{A})),}
			\arrow[from=1-1, to=1-3]
			\arrow["\cong"{description}, from=1-1, to=2-1]
			\arrow[from=1-3, to=1-5]
			\arrow["\cong"{description}, from=1-3, to=2-3]
			\arrow[from=2-1, to=2-3]
			\arrow[from=2-3, to=2-5]
		\end{tikzcd}
		\]
		Since $\Theta\circ\Theta^{-1}\cong \mathrm{id}$ in $\Aut\,D(X)$. This proves (2).
	\end{proof}
	
		\begin{proposition}\label{prop.cycle}
		Suppose that $(Q,W)$ is a quiver with potential such that there is at most one arrow between any two vertices and the length of any cycle in $W$ is $3$. Then, a \emph{$(Q,W)$-configuration} in $D(X)$ induces an $\AT(Q,W)$-action on $D(X)$ via $\beta_i\mapsto T_i$.
	\end{proposition}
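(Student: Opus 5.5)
We need to check that the assignment $\beta_i\mapsto T_i$ respects each defining relation of $\AT(Q,W)$; since the $T_i$ are autoequivalences of $D(X)$, this gives a homomorphism $\AT(Q,W)\to\Aut\,D(X)$. The commutation and braid relations come directly from Proposition~\ref{prop.dimandrelation}. If there is no arrow between $i$ and $j$, then $\dHomk(\cE_i,\cE_j)=0$ by Definition~\ref{def.con}, so $T_iT_j\cong T_jT_i$, which is $Co(\beta_i,\beta_j)$. If there is an arrow, then $\dim\dHomk(\cE_i,\cE_j)=1$, so $T_iT_jT_i\cong T_jT_iT_j$, which is $Br(\beta_i,\beta_j)$. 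The only new work is the cycle relation for each 3-cycle in $W$.

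Fix a cycle $1\to2\to3\to1$ in $W$. With $m=3$ we have $R_i=\beta_i\beta_{i+1}\beta_{i+2}\beta_{i+3}$, indices modulo $3$. So we must show
\[
T_1T_2T_3T_1\cong T_2T_3T_1T_2\cong T_3T_1T_2T_3 .
\]
The plan is to use Lemma~\ref{lemma.equivalence}(2), which needs $X$ Calabi--Yau, to convert conjugations into twists. This gives
\[
T_1T_2T_1^{-1}\cong T_{T_1\cE_2}.
\]
By Definition~\ref{def.con}, $T_1\cE_2\in\cE_3^\perp$, i.e. $\dHomk(\cE_3,T_1\cE_2)=0$. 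Both objects are spherical: $T_1\cE_2$ is spherical by Lemma~\ref{lemma.equivalence}(1). Since $X$ is Calabi--Yau, Serre duality gives $\dHomk(T_1\cE_2,\cE_3)=0$ as well, so Proposition~\ref{prop.dimandrelation} with $h=0$ yields
\[
T_{T_1\cE_2}T_3\cong T_3T_{T_1\cE_2}.
\]
Hence
\[
T_1T_2T_1^{-1}T_3\cong T_3T_1T_2T_1^{-1}.
\]
Lemma~\ref{lemma.change} gives the analogous orthogonality for the other two rotations: $T_2\cE_3\in\cE_1^\perp$ and $T_3\cE_1\in\cE_2^\perp$. So we get the same commutation identity for all cyclic rotations of the indices.

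It remains to turn these identities into the equalities of the $R_i$. Using the braid relation $T_1T_2T_1\cong T_2T_1T_2$, we can write
\[
T_1T_2T_1^{-1}=T_2^{-1}T_1T_2.
\]
The goal is to show $R_1\cong R_2$, i.e. $T_1T_2T_3T_1\cong T_2T_3T_1T_2$, and then the remaining equality follows by cyclic symmetry. This is the same word manipulation as in~\cite{DZ} for the 3-cycle quiver, and the plan is to follow that argument. First rewrite $R_2=T_2T_3T_1T_2$ using the rotated identity for $(2,3,1)$:
\[
T_2T_3T_2^{-1}T_1\cong T_1T_2T_3T_2^{-1},\quad\text{so}\quad T_2T_3T_2^{-1}\cong T_1T_2T_3T_2^{-1}T_1^{-1}.
\]
Multiplying on the right by $T_2$ and then using the braid relations between the pairs should bring the expression to $T_1T_2T_3T_1$.

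I expect this word-level verification to be the main obstacle, since one must combine the conjugated commutation with the three braid relations in the right order. If a direct manipulation is awkward, the fallback is to work in $\Br_3$-style normal forms: apply the orthogonality for a suitably chosen rotation, say $T_3\cE_1\in\cE_2^\perp$, so that the commuting pair matches the words being compared. Alternatively, one can cite~\cite[Proposition/Lemma for 3-cycles]{DZ}, whose hypotheses are exactly the pairwise $A_2$ conditions together with one orthogonality condition, which Lemma~\ref{lemma.change} shows is independent of the chosen vertex. Finally, relations coming from different cycles in $W$ and from different pairs are imposed independently, so the checks above cover all relations and the action is well defined.
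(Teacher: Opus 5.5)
Your proof is correct and is essentially the paper's. The paper observes that $T_3$ fixes $T_1\cE_2\cong T_2^{-1}\cE_1[r]$ (via \cite[Lemma~2.3(1)]{DZ}). It then applies Lemma~\ref{lemma.equivalence}(2) to get $(T_3T_1)T_2(T_3T_1)^{-1}\cong T_2^{-1}T_1T_2$. That is exactly your statement that $T_{T_1\cE_2}\cong T_1T_2T_1^{-1}$ commutes with $T_3$, rewritten using the braid relation $T_1T_2T_1^{-1}\cong T_2^{-1}T_1T_2$.

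The only thing you left unfinished is the word manipulation you flagged as the main obstacle. It is immediate and needs none of your fallbacks (normal forms, citing \cite{DZ}):
\[
T_3T_1T_2T_1^{-1}\cong T_1T_2T_1^{-1}T_3\cong T_2^{-1}T_1T_2T_3 .
\]
Multiplying on the left by $T_2$ and on the right by $T_1$ gives $T_2T_3T_1T_2\cong T_1T_2T_3T_1$, i.e.\ $R_2\cong R_1$.

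Your own sketch via the rotation $(2,3,1)$ also closes in one line. After right-multiplying by $T_2$, replace $T_2^{-1}T_1T_2$ by $T_1T_2T_1^{-1}$ and right-multiply by $T_1$.

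The remaining equality $R_2\cong R_3$ follows by rotating indices with Lemma~\ref{lemma.change}, as in the paper. Alternatively, right-multiply your commutation identity by $T_1$ and use $T_1^{-1}T_3T_1\cong T_3T_1T_3^{-1}$; this gives $R_1\cong R_3$ directly. So a single orthogonality condition together with the three braid relations already suffices.
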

	\begin{proof}
		Using Proposition~\ref{prop.dimandrelation}, it suffices to verify the cycle relations corresponding to 3-cycles in $W$ i.e., $T_1\cE_2\in\cE_3^\perp$ induces $T_1T_2T_3T_1\cong T_2T_3T_1T_2\cong T_3T_1T_2T_3$, if there is a cycle $1\to 2\to 3\to 1$ in $W$.
		
		By~\cite[Lemma~2.3(1)]{DZ}, there is an integer $r$ such that $T_1\cE_{2}\cong T_{2}^{-1}\cE_{1}[r]$. Thus $T_1\cE_2\in\cE_3^\perp$ induces $T_3(T_1\cE_2)\cong T_1\cE_2\cong T_2^{-1}\cE_1[r]$. Hence 
		\[(T_3T_1)T_2(T_3T_1)^{-1}\cong T_{(T_3T_1)\cE_2}\cong T_{T_2^{-1}\cE_1[r]}\cong T_{T_2^{-1}\cE_1}\cong T_2^{-1}T_1 T_2,\] 
		where, the first and last isomorphisms use Lemma~\ref{lemma.equivalence} and~\ref{lemma.sph}(2).
		
		In addition, by rotational symmetry, Lemma~\ref{lemma.change} induces $T_2T_3T_1T_2\cong T_3T_1T_2T_3$.
	\end{proof}
	
	\begin{lemma}\label{lemma.self}
		Assume $\{\cE_1,\cE_2\}$ is an $A_2$-configuration. Then $T_2T_1(\cE_2)\cong\cE_1[r]$ for some shift $r$.
	\end{lemma}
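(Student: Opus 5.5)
The idea is to use the fact, already invoked in the proof of Lemma~\ref{lemma.change}, that for an $A_2$-configuration $\{\cE_1,\cE_2\}$ there is an integer $r_1$ with
\[
T_1\cE_2\cong T_2^{-1}\cE_1[r_1],
\]
by~\cite[Lemma~2.3(1)]{DZ}. Applying $T_2$ to both sides and using that $T_2$ is an exact autoequivalence, so it commutes with shifts, gives
\[
T_2T_1\cE_2\cong T_2T_2^{-1}\cE_1[r_1]\cong\cE_1[r_1].
\]
This is the claim with $r=r_1$.

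If one wants to avoid quoting~\cite{DZ} and compute directly, the plan has three steps.

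\textbf{First twist.} Since $\dHomk(\cE_1,\cE_2)$ is one-dimensional, say concentrated in degree $m$, the triangle defining the twist reads
\[
\cE_1[-m]\to\cE_2\to T_1\cE_2.
\]
So $T_1\cE_2$ is the cone of the unique (up to scalar) nonzero map $\cE_1[-m]\to\cE_2$.

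\textbf{Hom from $\cE_2$.} Next compute $\dHomk(\cE_2,T_1\cE_2)$ from the long exact sequence of this triangle. We have $\dHomk(\cE_2,\cE_2)=\C\oplus\C[-d]$, and by Serre duality ($X$ Calabi–Yau) $\dHomk(\cE_2,\cE_1)$ is one-dimensional in degree $d-m$. The connecting map pairs the $\C[-d]$ summand of $\dHomk(\cE_2,\cE_2)$ with $\dHomk(\cE_2,\cE_1[-m])$ via the Serre pairing. This pairing is nondegenerate, so these two classes cancel, and $\dHomk(\cE_2,T_1\cE_2)$ is one-dimensional, coming from the identity of $\cE_2$.

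\textbf{Second twist.} Then $T_2T_1\cE_2$ is the cone of $\cE_2\to T_1\cE_2$. By the octahedral axiom applied to the composite $\cE_1[-m]\to\cE_2\to T_1\cE_2$, this cone is identified with $\cE_1[1-m]$. Hence $r=1-m$.

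The main obstacle is the nondegeneracy claim in the second step: that the connecting morphism in the long exact sequence is nonzero. This is exactly where the Calabi–Yau (Serre duality) hypothesis is used. Since the shortcut via~\cite[Lemma~2.3(1)]{DZ} is available from earlier in the paper, I would present that one-line argument as the proof.
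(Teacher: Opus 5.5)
Your main argument is the paper's proof: you invoke \cite[Lemma~2.3(1)]{DZ} to get $T_1\cE_2\cong T_2^{-1}\cE_1[r]$ and then apply $T_2$. Your optional direct computation is also sound for $d\ge 2$, though identifying $\mathrm{Cone}(\cE_2\to T_1\cE_2)$ with $\cE_1[1-m]$ needs only rotation of the defining triangle, not the octahedral axiom.
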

	\begin{proof}
		By~\cite[Lemma~2.3(1)]{DZ}, there is an integer $r$ such that $T_1\cE_2=T_2^{-1}\cE_1[r]$. Then we have
		\[T_2T_1(\cE_2)\cong T_2T_2^{-1}\cE_1[r]\cong\cE_1[r].\qedhere\]
	\end{proof}
	
	\begin{lemma}\label{lemma.A3}
		Assume $\{\cE_1,\cE_2,\cE_3\}$ is an $A_3$-configuration. Then $\{\cE_1,T_2(\cE_3),\cE_3\}$ is an $A_3$-configuration.
	\end{lemma}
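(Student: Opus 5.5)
The plan is to check the three conditions of Definition~\ref{def.con} for $(Q,W)=(A_3,0)$, with the middle object $T_2\cE_3$. These conditions are: the three objects are spherical, $\dHomk(\cE_1,T_2\cE_3)$ and $\dHomk(T_2\cE_3,\cE_3)$ are one-dimensional, and $\dHomk(\cE_1,\cE_3)=0$. The last condition holds already by hypothesis, since $\{\cE_1,\cE_2,\cE_3\}$ is an $A_3$-configuration. Sphericity of $T_2\cE_3$ follows from Lemma~\ref{lemma.equivalence}(1), since $T_2\in\Aut\,D(X)$ and $X$ is Calabi--Yau. So only the two one-dimensionality statements need work.

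For $\dHomk(\cE_1,T_2\cE_3)$ I will apply $\dHomk(\cE_1,-)$ to the defining triangle
\[
\cE_2\otimes\dHomk(\cE_2,\cE_3)\longrightarrow \cE_3\longrightarrow T_2\cE_3 .
\]
Since $\dHomk(\cE_1,\cE_3)=0$, the long exact sequence gives
\[
\dHomk(\cE_1,T_2\cE_3)\cong \dHomk(\cE_1,\cE_2)\otimes\dHomk(\cE_2,\cE_3)[1].
\]
Both factors on the right are one-dimensional, because $\{\cE_1,\cE_2\}$ and $\{\cE_2,\cE_3\}$ are $A_2$-configurations. Hence the total dimension is $1$.

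For $\dHomk(T_2\cE_3,\cE_3)$ I will avoid the direct long exact sequence, which involves the less transparent map coming from $\dHomk(\cE_3,\cE_3)$. Instead I use~\cite[Lemma~2.3(1)]{DZ}, as in the proof of Lemma~\ref{lemma.change}, to write $T_2\cE_3\cong T_3^{-1}\cE_2[r]$ for some integer $r$. Then
\[
\dHomk(T_2\cE_3,\cE_3)\cong\dHomk(\cE_2,T_3\cE_3)[-r].
\]
For a spherical object one has $T_{\cE}\cE\cong\cE[1-d]$: the evaluation map $\cE\otimes(\C\oplus\C[-d])\to\cE$ is the identity on the degree-$0$ summand, so its cone is $\cE[-d][1]$. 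Therefore $\dHomk(\cE_2,T_3\cE_3)$ is a shift of $\dHomk(\cE_2,\cE_3)$, which is one-dimensional.

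The only delicate point is this last step, namely justifying $T_3\cE_3\cong\cE_3[1-d]$ from the cone description of the twist. This is standard. Together with the previous two paragraphs it completes the verification that $\{\cE_1,T_2(\cE_3),\cE_3\}$ is an $A_3$-configuration.
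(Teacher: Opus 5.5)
Your proof is correct and follows the paper's argument. For $\dHomk(T_2\cE_3,\cE_3)$ you use exactly the paper's route, namely $T_2\cE_3\cong T_3^{-1}\cE_2[r]$ together with $T_3\cE_3$ being a shift of $\cE_3$; your shift $[1-d]$ is the correct one, and the paper's $[-d]$ is a harmless slip. For $\dHomk(\cE_1,T_2\cE_3)$ you read the answer off the defining triangle rather than passing to $\dHomk(T_3\cE_1,\cE_2)$ with $T_3\cE_1\cong\cE_1$ as the paper does, which is an equally valid and slightly more elementary variant.

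You should also add, as in the paper's last line, that $\dHomk(T_2\cE_3,\cE_1)$ and $\dHomk(\cE_3,T_2\cE_3)$ have the same total dimension by Serre duality, since the definition is stated for unordered pairs.
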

	\begin{proof}
		 Using~\cite[Lemma~2.3(1)]{DZ}, there is an integer $r$ such that $T_2\cE_3=T_3^{-1}\cE_2[r]$. Then using $T_i\in \Aut\, D(X)$, we have
		\[\dHomk(\cE_1,T_2(\cE_3))\cong\dHomk(\cE_1,T_3^{-1}\cE_2)[r]\cong \dHomk(T_3\cE_1,\cE_2)[1-r]\cong\dHomk(\cE_1,\cE_2)[r]\]
		where the last isomorphism uses $\dHomk(\cE_1,\cE_3)=0$. 
		
		And
		\[\dHomk(T_2(\cE_3),\cE_3)\cong\dHomk(T_3^{-1}\cE_2,\cE_3)[-r]\cong \dHomk(\cE_2,T_3\cE_3)\cong\dHomk(\cE_2,\cE_3)[-r- d],\]
		where the last isomorphism uses $T_3\cE_3=\cE_3[- d]$.
		
		Thus, \(\dim \dHomk(\cE_1,T_2(\cE_3))=1\) and \(\dim \dHomk(T_2(\cE_3),\cE_3)=1\). The claim then follows from Lemma~\ref{lemma.equivalence}(1) and Serre duality.
	\end{proof}
	
	Consider a special quiver with potential $(Q,W)$ as shown in Figure~\ref{fig.quiver139}
	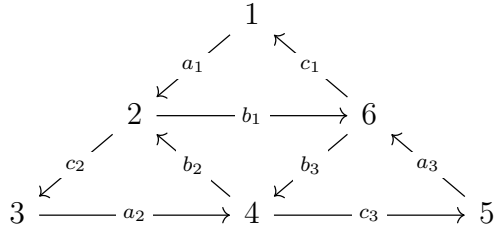
\begin{figure}[htbp]
		\centering
		\begin{tikzcd}
			&& 1 \\
			& 2 && 6 \\
			3 && 4 && 5
			\arrow["a_1"{description}, from=1-3, to=2-2]
			\arrow["b_1"{description}, from=2-2, to=2-4]
			\arrow["c_2"{description}, from=2-2, to=3-1]
			\arrow["c_1"{description}, from=2-4, to=1-3]
			\arrow["b_3"{description}, from=2-4, to=3-3]
			\arrow["a_2"{description}, from=3-1, to=3-3]
			\arrow["b_2"{description}, from=3-3, to=2-2]
			\arrow["c_3"{description}, from=3-3, to=3-5]
			\arrow["a_3"{description}, from=3-5, to=2-4]
		\end{tikzcd}
		\caption{$(Q, W=\sum_1^3(-a_kb_kc_k)+b_3b_2b_1)$}
		\label{fig.quiver139}
	\end{figure}
	
	When $X$ is Calabi-Yau, a $(Q,W)$-configuration in $D(X)$ induces a $D_6$-configuration in $D(X)$.
	\begin{proposition}\label{prop.D6}
		Assume $X$ is Calabi-Yau and \(\{\cE_1,\cE_2,\cE_3,\cE_4,\cE_5,\cE_6\}\) is a $(Q,W)$-configuration in $D(X)$. Then \[\cF_1\coloneqq\cE_5,\quad\cF_2\coloneqq T_4T_5T_2(\cE_6),\quad\cF_3\coloneqq\cE_4,\quad\cF_4\coloneqq\cE_3,\quad\cF_5\coloneqq T_2(\cE_3),\quad \cF_6\coloneqq \cE_1\] generate a $D_6$-configuration as shown in Figure~\ref{fig.D6con}.
		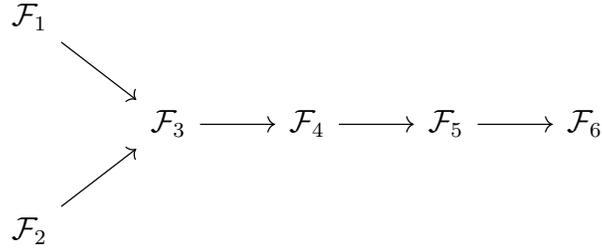
\begin{figure}[htbp]
			\centering
			\begin{tikzcd}
				\cF_1 \\
				& \cF_3 & \cF_4 & \cF_5 & \cF_6\\
				\cF_2
				\arrow[ from=1-1, to=2-2]
				\arrow[ from=3-1, to=2-2]
				\arrow[ from=2-2, to=2-3]
				\arrow[ from=2-3, to=2-4]
				\arrow[ from=2-4, to=2-5]
			\end{tikzcd}
			\caption{$D_6$-configuration}
			\label{fig.D6con}
		\end{figure}
	\end{proposition}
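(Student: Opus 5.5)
The plan is to check the two conditions defining a $D_6$-configuration for Figure~\ref{fig.D6con}. First, each $\cF_i$ must be spherical. Second, $\dim\dHomk(\cF_i,\cF_j)=1$ on the five edges $\cF_1\!-\!\cF_3$, $\cF_2\!-\!\cF_3$, $\cF_3\!-\!\cF_4$, $\cF_4\!-\!\cF_5$, $\cF_5\!-\!\cF_6$, and $0$ on the remaining ten pairs. Sphericity is immediate from Lemma~\ref{lemma.equivalence}(1), since every $\cF_i$ is the image of some $\cE_k$ under a product of spherical twists. By Serre duality on the Calabi--Yau $X$, it is enough to compute $\dHomk$ in one direction for each pair.

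From Figure~\ref{fig.quiver139}, the edges of $Q$ are $1\!-\!2$, $2\!-\!6$, $6\!-\!1$, $2\!-\!3$, $3\!-\!4$, $4\!-\!2$, $4\!-\!5$, $5\!-\!6$ and $6\!-\!4$. The $3$-cycles of $W$ are $1\to2\to6\to1$, $3\to4\to2\to3$, $5\to6\to4\to5$ and $6\to4\to2\to6$. Combining the configuration axiom with Lemma~\ref{lemma.change}, this gives the orthogonalities $T_2\cE_3\in\cE_4^\perp$, $T_2\cE_6\in\cE_1^\perp$ and $T_2\cE_6\in\cE_4^\perp$, and their rotations.

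\textbf{Pairs not involving $\cF_2$.} These are handled by applying $\dHomk(\cE_x,-)$ to the defining triangle
\[\cE_2\otimes\dHomk(\cE_2,\cE_3)\to\cE_3\to T_2\cE_3,\]
in which $\dHomk(\cE_2,\cE_3)$ is one-dimensional.
\begin{itemize}
\item For $x=1$: we have $\dHomk(\cE_1,\cE_3)=0$, so $\dHomk(\cE_1,T_2\cE_3)\cong\dHomk(\cE_1,\cE_2)$ up to shift, which has dimension $1$ (edge $\cF_5\!-\!\cF_6$).
\item For $x=5$: both terms vanish, giving $0$.
\item For $x=4$: the cycle condition gives $0$ directly.
\item For $\cF_4\!-\!\cF_5$: write $T_2\cE_3\cong T_3^{-1}\cE_2[r]$ via \cite[Lemma~2.3(1)]{DZ}. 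Then $\dHomk(\cE_3,T_3^{-1}\cE_2)\cong\dHomk(\cE_3,\cE_2)$ up to shift, which is one-dimensional, exactly as in Lemma~\ref{lemma.A3}.
\item The remaining pairs among $\cE_5,\cE_4,\cE_3,\cE_1$ are read off from the quiver. This gives $1$ for $4\!-\!5$ and $3\!-\!4$, and $0$ for $5\!-\!3$, $5\!-\!1$, $4\!-\!1$ and $3\!-\!1$.
\end{itemize}

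\textbf{Pairs involving $\cF_2=T_4T_5T_2\cE_6$.} This is the main step, and I will compute it in three stages.

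\emph{Stage 1.} Set $G\coloneqq T_2\cE_6$. Using the triangle $\cE_2\otimes\dHomk(\cE_2,\cE_6)\to\cE_6\to G$ together with the orthogonalities above, I obtain
\[G\in\cE_1^\perp\cap\cE_4^\perp,\qquad \dim\dHomk(\cE_3,G)=1,\qquad \dim\dHomk(\cE_5,G)=1.\]
The two dimension-one statements come from the fact that exactly one of $\cE_2$, $\cE_6$ is connected to $\cE_3$ (respectively $\cE_5$).

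\emph{Stage 2.} Set $H\coloneqq T_5G$, which is the cone of $\cE_5\otimes\dHomk(\cE_5,G)\to G$. I then compute $\dHomk(\cE_x,H)$ for $x=1,3,4,5$:
\begin{itemize}
\item For $x=5$: use $T_5^{-1}\cE_5=\cE_5[3]$, which shows $\dHomk(\cE_5,H)\cong\dHomk(\cE_5,G)$ up to shift, hence one-dimensional.
\item For $x=1,3$: $\cE_5$ is orthogonal to $\cE_1$ and to $\cE_3$, so the values from Stage~1 are unchanged.
\item For $x=4$: we have $\dHomk(\cE_4,G)=0$, so $\dHomk(\cE_4,H)\cong\dHomk(\cE_4,\cE_5)$ up to shift, which is one-dimensional.
\end{itemize}
I also need $\dHomk(T_2\cE_3,H)$. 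For this I use adjunction, $\dHomk(T_2\cE_3,T_5T_2\cE_6)\cong\dHomk(T_5^{-1}\cE_3,\cE_6)$ up to the action of $T_2$, and reduce it via \cite[Lemma~2.3(1)]{DZ} and the vanishing $\dHomk(\cE_3,\cE_5)=0$.

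\emph{Stage 3.} Apply $T_4$ in the same way. Since $\cE_4$ is orthogonal to $\cE_1$, the values for $\cE_1$ are unchanged. For $\cE_5$ and $\cE_3$, each of which is linked to $\cE_4$, the evaluation maps must be controlled. The target values are $1$ for $\cE_4$ and $0$ for $\cE_5$, $\cE_3$, $\cE_1$ and $T_2\cE_3$.

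The obstacle I expect is that each cone computation yields only bounds on dimensions, because the connecting maps are not determined by the long exact sequence alone. To get exact answers I will instead rewrite the relevant twists using $T_iT_j\cE_i\cong\cE_j[r]$ (Lemma~\ref{lemma.self}) and the braid and cycle relations of Proposition~\ref{prop.cycle}. For example, $\dHomk(\cE_5,T_4T_5G)\cong\dHomk(T_5^{-1}T_4^{-1}\cE_5,G)$, and Lemma~\ref{lemma.self} gives $T_5^{-1}T_4^{-1}\cE_5\cong\cE_4$ up to shift. Hence this equals $\dHomk(\cE_4,G)$ up to shift, which is $0$ by Stage~1. I will handle the other pairs with $\cF_2$ analogously, moving all twists onto the source, so that each case reduces to a single $\dHomk$ between an $\cE_k$ and $G$ or $\cE_6$ that Stage~1 or the quiver determines exactly.
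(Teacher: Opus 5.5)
Your proposal is correct and follows the paper's argument. The $A_5$ part is Lemma~\ref{lemma.A3} plus orthogonality. For the pairs involving $\cF_2$ you end up doing what the paper does: move the twists to the other argument and simplify with Lemma~\ref{lemma.self}, \cite[Lemma~2.3(1)]{DZ}, non-adjacency and the cycle conditions, so your Stage~1--2 cone computations are a harmless detour.

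One small correction to your last sentence: the $\cF_2$--$\cE_3$ pair does not reduce to Stage~1 or quiver data alone. As in the paper, you reach $\dHomk(\cE_4,T_5\cE_6)$ via $T_4^{-1}\cE_3\cong T_3\cE_4$ (up to shift), $T_3\cE_4\in\cE_2^\perp$, $T_2T_5\cong T_5T_2$ and $T_5\cE_6\in\cE_3^\perp$, and that vanishes by the cycle $5\to6\to4\to5$.
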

	\begin{proof}
		Using Lemma~\ref{lemma.self} and~\ref{lemma.A3}, we have an $A_5$-subconfiguration
		\(\{\cF_1\coloneqq\cE_5,\cF_3\coloneqq\cE_4,\cF_4\coloneqq\cE_3,\cF_5\coloneqq T_2(\cE_3), \cF_6\coloneqq \cE_1\}\) since there is a cycle $2\to3\to 4\to 2$ in $W$. It suffices to show that the $\dim\dHomk(\cF_2,\cF_3)=1$ and $\dim\dHomk(\cF_2,\cF_k)=0$ for $k\neq 2,3$.
		
		By~\cite[Lemma~2.3(1)]{DZ}, there is an integer $r_i$ such that $T_i\cE_{i+1}\cong T_{i+1}^{-1}\cE_i[r_i]$ with indices taken modulo $6$, for $1\leq i\leq 6$. Then using $T_i\in\Aut\, D(X)$, we have 
		\begin{itemize}
			\item
			\begin{align}
				\dHomk(\cF_2,\cF_1)=&\dHomk(T_4T_5T_2(\cE_6),\cE_5)\notag\\
				&\cong\dHomk(T_2(\cE_6),(T_4T_5)^{-1}\cE_5)\notag\\
				&\cong\dHomk(T_2(\cE_6),\cE_4)[-r_4]\label{sim.lemmaself}\\
				&=0\label{sim.264cylce},
			\end{align}
			where \eqref{sim.lemmaself} uses Lemma~\ref{lemma.self} and \eqref{sim.264cylce} is because there is a cycle $2\to 6\to 4\to 2$ and using Serre duality.
			
			\item 
			\begin{align}
				\dHomk(\cF_2,\cF_3)=&\dHomk(T_4T_5T_2(\cE_6),\cE_4)\notag\\
				&\cong\dHomk(T_2(\cE_6),T_5^{-1}\cE_4)[ d]\label{sim.self}\\
				&\cong\dHomk(T_2(\cE_6),T_4\cE_5)[-r_4+ d]\notag\\
				&\cong\dHomk(T_2(\cE_6),\cE_5)[-r_4+ d]\label{sim.246cycle}\\
				&\cong\dHomk(\cE_6,\cE_5)[-r_4+ d]\label{sim.E5inE2perp},
			\end{align}
			where \eqref{sim.self} uses $T_4^{-1}\cE_4\cong\cE_4[ d]$, \eqref{sim.246cycle} is because there is a cycle $ 2\to 6\to 4\to 2$ in $W$ and \eqref{sim.E5inE2perp} is because there is no arrow between $2$ and $5$ in $Q$.
			
			\item 
			\begin{align}
				\dHomk(\cF_2,\cF_4)=&\dHomk(T_4T_5T_2(\cE_6),\cE_3)\notag\\
				&\cong\dHomk(T_2T_5(\cE_6),T_3\cE_4)[-r_3]\label{sim.E_2}\\
				&\cong\dHomk(T_5(\cE_6),T_3\cE_4)[-r_3]\label{sim.234cycle}\\
				&\cong \dHomk(T_5(\cE_6),\cE_4)[-r_3]\label{sim.E_3perp}\\
				&=0\label{sim.456cycle},
			\end{align}
			where \eqref{sim.E_2} is because there is no arrow between $2$ and $5$ in $Q$, \eqref{sim.234cycle} is because there is a cycle $2\to 3\to 4\to 2$ in $W$, \eqref{sim.E_3perp} is because there is no arrow between $3$ and $k$ in $Q$, for $k=5,6$, and \eqref{sim.456cycle} is because there is a cycle $5\to 6\to 4\to 5$ in $W$ and using Serre duality.
			
			\item
			\begin{align}
				\dHomk(\cF_2,\cF_5)&=\dHomk(T_4T_5T_2(\cE_6),T_2(\cE_3))\notag\\
				&\cong\dHomk(T_5T_2(\cE_6),T_2(\cE_3))\label{sim.234cycle(1)}\\
				&\cong \dHomk(T_5(\cE_6),(\cE_3))\label{sim.T_2}\\
				&=0\label{sim.563}
			\end{align}
			where \eqref{sim.234cycle(1)} is because there is a cycle $2\to 3\to 4\to 2$, \eqref{sim.T_2} follows that there is no arrow between $2$ and $5$ in $Q$, and \eqref{sim.563} is because there is no arrow between $3$ and $k$ in $Q$, for $k=5,6$.
			
			\item 
			\[\dHomk(\cF_2,\cF_6)=\dHomk(T_4T_5T_2(\cE_6),\cE_1)\cong\dHomk(T_2(\cE_6),\cE_1)=0,\]
			where the isomorphism is because there is no arrow between $1$ and $k$ in $Q$, for $k=4,5$ and the final equality is because there is a cycle $2\to 6\to 1\to 2$ in $W$ and using Serre duality.
			
			\end{itemize}
		Thus, \[\dim\dHomk(\cF_2,\cF_k)=
		\begin{cases}
			1, & k=3\\
			0, & k=1,4,5,6,
		\end{cases}
		\]
		and the claim is given by Serre duality.
	\end{proof}

\section{Homological calculations for spherical sheaves}

	\subsection{General calculations}\label{sec.cal}
	The following results will be used to compute the $\Hom$ spaces between spherical objects later.
	
	Let $D_1$ and $D_2$ be two Cartier divisors in $X$ with embeddings $i_k\colon D_k\to {X}$. Assume their intersection $C\coloneqq D_1\cap D_2$ is also Cartier in each $D_k$, with embeddings $j_k\colon C\to D_k$.
	\begin{equation*}
		\begin{tikzcd}
			& {X}                                   &                    \\
			{D_1} \arrow[ru, hook,"i_1"] &                                     & {D_2} \arrow[lu, hook', "i_2"'] \\
			& C \arrow[lu, hook, "j_1"] \arrow[ru, hook', "j_2"'] &                   
		\end{tikzcd}
	\end{equation*}
	\begin{proposition}\label{prop.otimes}
		Assume $\cE$ is locally free over $X$ and $\cF\in D(D_2)$. Then
		\begin{equation*}
			\mathcal{H}om_X(i_{1*}\cO_{D_1}\otimes\cE,i_{2*}\cF)\cong i_{2*} j_{2*}(\cO_{C}(D_1)\otimes j_2^*\cF\otimes j_2^*i_2^*\cE^\vee)[-1].
		\end{equation*}
	\end{proposition}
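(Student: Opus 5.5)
The plan is to reduce the local Hom computation to the Koszul resolution of $\cO_{D_1}$ and then restrict to $D_2$. First, pull out the locally free factor: since $\cE$ is locally free,
\[
\mathcal{H}om_X(i_{1*}\cO_{D_1}\otimes\cE,\,i_{2*}\cF)\cong\mathcal{H}om_X(i_{1*}\cO_{D_1},\,i_{2*}\cF)\otimes\cE^\vee\cong\mathcal{H}om_X(i_{1*}\cO_{D_1},\,i_{2*}(\cF\otimes i_2^*\cE^\vee)),
\]
using the projection formula for $i_2$. So it suffices to treat the case $\cE=\cO_X$ with $\cF$ replaced by $\cF\otimes i_2^*\cE^\vee$. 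At the end we also need $j_2^*(\cF\otimes i_2^*\cE^\vee)\cong j_2^*\cF\otimes j_2^*i_2^*\cE^\vee$.

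Next, resolve $\cO_{D_1}$ by the two-term locally free complex $\cO_X(-D_1)\to\cO_X$, which is valid because $D_1$ is Cartier. Applying $\mathcal{H}om_X(-,i_{2*}\cF')$ gives the complex
\[
i_{2*}\cF'\longrightarrow i_{2*}\cF'\otimes\cO_X(D_1),
\]
placed in degrees $0,1$, where the map is multiplication by the defining section $s$ of $\cO_X(D_1)$. By the projection formula this is $i_{2*}$ of the complex $\cF'\to\cF'\otimes\cO_{D_2}(D_1|_{D_2})$, with multiplication by $s|_{D_2}$.

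The key hypothesis is that $C=D_1\cap D_2$ is Cartier in $D_2$ and is cut out by $s|_{D_2}$. This means $s|_{D_2}$ is a regular section, so $\cO_{D_2}\xrightarrow{s}\cO_{D_2}(D_1)$ is injective with cokernel $j_{2*}\cO_C(D_1)$. Hence the two-term complex $[\cO_{D_2}\to\cO_{D_2}(D_1)]$ is quasi-isomorphic to $j_{2*}\cO_C(D_1)[-1]$. Tensoring over $D_2$ with $\cF'$, which is a derived tensor, and applying the projection formula for $j_2$ identifies our complex with $j_{2*}(\cO_C(D_1)\otimes j_2^*\cF')[-1]$. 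Applying $i_{2*}$ then gives the claim.

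The main obstacle is checking that the multiplication-by-$s$ complex tensored with $\cF'$ really equals $\cF'\otimes^{L}j_{2*}\cO_C(D_1)[-1]$ for an arbitrary object $\cF'\in D(D_2)$, rather than just for a sheaf. I would handle this by doing the tensoring at the level of the locally free two-term complex on $D_2$, which is already a flat resolution, so the identification is functorial in $\cF'$. The identification of the first map with $s$ follows from dualizing the Koszul differential.
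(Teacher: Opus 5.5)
Your proof is correct and follows the paper's route. The paper likewise pulls out $\cE^\vee$ (via \cite[III, Proposition~6.7]{Har}) and then cites \cite[Proposition~2.5]{DZ} for $\mathcal{H}om_X(i_{1*}\cO_{D_1},i_{2*}\cF)\cong i_{2*}j_{2*}(\cO_C(D_1)\otimes j_2^*\cF)[-1]$; that cited identity is the Koszul-resolution computation you carry out by hand, the only difference being that you absorb $\cE^\vee$ into $\cF$ beforehand rather than afterwards.
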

	\begin{proof}
		Since $\cE$ is locally free, using~\cite[III~Proposition~6.7]{Har} and~\cite[Proposition~2.5]{DZ},
		\begin{align*}
			\mathcal{H}om_X(i_{1*}\cO_{D_1}\otimes\cE, i_{2*}\cF)&\cong \mathcal{H}om_X(i_{1*}\cO_{D_1}, i_{2*}\cF)\otimes\cE^\vee\\
			&\cong i_{2*} j_{2*}(\cO_{C}(D_1)\otimes j_2^*\cF\otimes j_2^*i_2^*\cE^\vee)[-1].
		\end{align*}
	\end{proof}

	Assume $D_k'$ is a divisor in $X$ such that $C_k\coloneqq D_k\cap D_k'$ is a divisor in $D_k$, for $k=1,2$. Since $i_{k}$ are closed embeddings, $i_{k*}\cO_{D_k}\otimes\cO_X(-D_k')\cong i_{k*}\cO_{D_k}(-C_k)$, for $k=1,2$.
	
	\begin{corollary}\label{coro.Homspace}
		$\mathcal{H}om_X(i_{1*}\cO_{D_1}(-C_1), i_{2*}\cO_{D_2}(-C_2))$ is isomorphic to $i_{2*} j_{2*}(\cO_{C}(D_1+D_1'-D_2'))[-1]$.
	\end{corollary}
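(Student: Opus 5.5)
The plan is to reduce directly to Proposition~\ref{prop.otimes}. First rewrite both sheaves as twists of structure sheaves by line bundles on $X$. By the remark preceding the statement, $i_{k*}\cO_{D_k}(-C_k)\cong i_{k*}\cO_{D_k}\otimes\cO_X(-D_k')$ for $k=1,2$. Applying the projection formula to $i_2$, the target is $i_{2*}\cF$ with $\cF\coloneqq \cO_{D_2}(-C_2)=i_2^*\cO_X(-D_2')$. The source is $i_{1*}\cO_{D_1}\otimes\cE$ with $\cE\coloneqq\cO_X(-D_1')$, which is locally free. The hypotheses of Proposition~\ref{prop.otimes} are therefore satisfied.

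Proposition~\ref{prop.otimes} then gives
\[
\mathcal{H}om_X\bigl(i_{1*}\cO_{D_1}(-C_1),i_{2*}\cO_{D_2}(-C_2)\bigr)\cong i_{2*}j_{2*}\bigl(\cO_C(D_1)\otimes j_2^*i_2^*\cO_X(-D_2')\otimes j_2^*i_2^*\cO_X(D_1')\bigr)[-1],
\]
using $\cE^\vee\cong\cO_X(D_1')$.

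It remains to identify the line bundle on $C$. Pullback of line bundles commutes with tensor products, and $(i_2j_2)^*\cO_X(D)$ is the restriction $\cO_C(D)$. Hence the bracket is $\cO_C(D_1)\otimes\cO_C(D_1')\otimes\cO_C(-D_2')\cong\cO_C(D_1+D_1'-D_2')$, which is the claimed formula.

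The only point needing care is the meaning of notation rather than any real obstacle. We interpret $\cO_C(D)$ as the restriction of the line bundle $\cO_X(D)$ to $C$; this is well defined as a line bundle even when $C$ is contained in the support of $D$. We also need that $j_2^*i_2^*$ of a line bundle is underived, which holds because line bundles are flat. With these conventions the corollary is an immediate substitution into Proposition~\ref{prop.otimes}.
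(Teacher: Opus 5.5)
Your proposal is correct and follows essentially the same route as the paper. Both rewrite the source as $i_{1*}\cO_{D_1}\otimes\cO_X(-D_1')$, apply Proposition~\ref{prop.otimes} with $\cF=\cO_{D_2}(-C_2)$, and then combine the restricted line bundles on $C$ into $\cO_C(D_1+D_1'-D_2')$.
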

	\begin{proof}
		Since $\cO_X(-D_k')$ is locally free, using Proposition~\ref{prop.otimes}, we have
		\begin{align*}
			\mathcal{H}om_X(i_{1*}\cO_{D_1}(-C_1), i_{2*}\cO_{D_2}(-C_2))&\cong \mathcal{H}om_X(i_{1*}\cO_{D_1}\otimes\cO_X(-D_1'), i_{2*}\cO_{D_2}(-C_2))\\
			&\cong  i_{2*} j_{2*}(\cO_{C}(D_1)\otimes j_2^*\cO_{D_2}(-C_2)\otimes j_2^*i_2^*\cO_X(D_1'))[-1]\\
			&\cong  i_{2*}j_{2*}\cO_{C}(D_1-D_2'+D_1')[-1]
		\end{align*}
	\end{proof}
	
	\begin{lemma}[\hspace*{-0.4em}{\cite[Lemma~2.6]{DZ}}]\label{lemma.number}
		Assume $\dim X=3$, $C\cong\PP^1$ and all divisors are smooth. Assume $C^2=c_2$ in $D_2$ is the self-intersection of $C$ in $D_2$. Then $\mathcal{O}_{C}(D_1)\cong\mathcal{O}_{C}(c_2)$.
	\end{lemma}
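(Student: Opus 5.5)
The plan is to reduce the statement to a degree computation on $D_2$, using the intersection theory of the smooth surface $D_2$ inside the threefold $X$. The line bundle $\mathcal{O}_C(D_1)$ is $j_2^*i_2^*\mathcal{O}_X(D_1)$. Since $C=D_1\cap D_2$ is a Cartier divisor in $D_2$, and $D_1$ meets $D_2$ in exactly $C$, the restriction $i_2^*\mathcal{O}_X(D_1)$ is $\mathcal{O}_{D_2}(C)$. This holds provided the scheme-theoretic intersection is $C$ with multiplicity one, i.e.\ the intersection is transverse. Given that, I would identify $i_2^*\mathcal{O}_X(D_1)\cong \mathcal{O}_{D_2}(D_1\cap D_2)=\mathcal{O}_{D_2}(C)$ and then restrict once more to $C$, obtaining
\[
\mathcal{O}_C(D_1)\cong j_2^*\mathcal{O}_{D_2}(C)\cong \mathcal{N}_{C/D_2},
\]
the normal bundle of $C$ in $D_2$.

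The second step is to compute the degree. Line bundles on $C\cong\PP^1$ are classified by their degree, so it suffices to show $\deg \mathcal{N}_{C/D_2}=c_2$. By definition of the self-intersection number on the smooth projective surface, or in the toric setting on the smooth surface $D_2$,
\[
\deg\bigl(\mathcal{O}_{D_2}(C)|_C\bigr)=C\cdot C=c_2 .
\]
Hence $\mathcal{O}_C(D_1)\cong\mathcal{O}_{\PP^1}(c_2)$, which is the statement written as $\mathcal{O}_C(c_2)$.

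The main point needing care is the first step: the identification $i_2^*\mathcal{O}_X(D_1)\cong\mathcal{O}_{D_2}(C)$. It requires that $D_1|_{D_2}=C$ as Cartier divisors, i.e.\ that $D_1$ and $D_2$ meet transversally along $C$ and do not meet elsewhere. This follows from the hypothesis $C\coloneqq D_1\cap D_2$ as a scheme, together with the smoothness of both divisors and the fact that $C$ is a smooth Cartier divisor in each. Concretely, a local equation $f$ of $D_1$ restricts on $D_2$ to a local equation of $D_1\cap D_2=C$.

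The only remaining subtlety is properness, so that the self-intersection number is defined. If $D_2$ is not compact, the degree of $\mathcal{O}_{D_2}(C)|_C$ is still well defined because $C$ is a complete curve, so no issue arises.
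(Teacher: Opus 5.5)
Your argument is correct. You identify $\mathcal{O}_C(D_1)=j_2^*i_2^*\mathcal{O}_X(D_1)$ with $j_2^*\mathcal{O}_{D_2}(C)=\mathcal{N}_{C/D_2}$, using that the restricted Cartier divisor $D_1|_{D_2}$ is the scheme-theoretic intersection $C$, and then read off its degree as $C^2=c_2$ on $C\cong\PP^1$; this is the standard proof. The paper gives no argument of its own here and simply imports the statement from \cite[Lemma~2.6]{DZ}, so yours is the natural argument behind that citation.
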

	
	\begin{corollary}\label{coro.Hom}
		Assume $\dim X=3$, $C\cong\PP^1$ and $D_1'-D_2'\sim -D_1+D_3'$ as divisors in $X$ and $D_3'\cap D_2=\varnothing$. Then $\dHomk(i_{1*}\cO_{D_1}(-C_1), i_{2*}\cO_{D_2}(-C_2))=\C[-1]$
	\end{corollary}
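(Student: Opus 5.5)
We compute the derived sheaf Hom and then take global sections. By Corollary~\ref{coro.Homspace},
\[
\mathcal{H}om_X\bigl(i_{1*}\cO_{D_1}(-C_1),\, i_{2*}\cO_{D_2}(-C_2)\bigr)\cong i_{2*}j_{2*}\cO_C(D_1+D_1'-D_2')[-1].
\]
So the whole computation reduces to identifying the line bundle $\cO_C(D_1+D_1'-D_2')$ on $C\cong\PP^1$.

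The linear equivalence $D_1'-D_2'\sim -D_1+D_3'$ in $X$ gives
\[
\cO_X(D_1+D_1'-D_2')\cong\cO_X(D_3').
\]
Restricting along the closed embedding $i_2j_2\colon C\to X$ yields
\[
\cO_C(D_1+D_1'-D_2')\cong (i_2j_2)^*\cO_X(D_3')\cong j_2^*\bigl(\cO_X(D_3')|_{D_2}\bigr).
\]
Since $D_3'\cap D_2=\varnothing$, the section of $\cO_X(D_3')$ cutting out $D_3'$ is nowhere vanishing on $D_2$. Hence $\cO_X(D_3')|_{D_2}\cong\cO_{D_2}$, and so $\cO_C(D_1+D_1'-D_2')\cong\cO_C$. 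Notice that Lemma~\ref{lemma.number} is not actually needed here, because the $D_1$ term is absorbed by the linear equivalence.

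It remains to take hypercohomology. Push-forward along the closed embeddings $i_2,j_2$ is exact and preserves cohomology, so
\[
\dHomk\cong\Gamma^\bullet_X\bigl(i_{2*}j_{2*}\cO_C[-1]\bigr)\cong H^\bullet(\PP^1,\cO_{\PP^1})[-1].
\]
Using $H^0(\PP^1,\cO)=\C$ and $H^1(\PP^1,\cO)=0$, we obtain $\C[-1]$.

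The only step needing care is justifying that $\cO_X(D_3')$ restricts trivially to $D_2$ when the divisors are disjoint. This holds since $D_3'$ is an effective Cartier divisor whose defining section does not vanish on $D_2$. If $D_3'$ were only a divisor class, we would instead argue that a representative disjoint from $D_2$ exists, which is what the hypothesis provides.
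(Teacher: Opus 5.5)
Your proof is correct and follows the paper's argument: apply Corollary~\ref{coro.Homspace}, use the linear equivalence to identify $\cO_C(D_1+D_1'-D_2')$ with the restriction of $\cO_X(D_3')$, which is trivial because $D_3'$ is disjoint from $D_2\supset C$, and then take cohomology on $C\cong\PP^1$. One small point: $D_3'$ need not be effective (in the paper's use for $\dHomk(\cE_8,\cE_1)$ one has $D_3'=-S_7$), but your triviality argument still works, since the canonical rational section of $\cO_X(D_3')$ is regular and nowhere vanishing off the support of $D_3'$.
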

	\begin{proof}
		Using Corollary~\ref{coro.Homspace} and the assumption, we have
		\[\dHomk(i_{1*}\cO_{D_1}(-C_1), i_{2*}\cO_{D_2}(-C_2))\cong\Gamma_{C}^\bullet\cO_C(D_1+D_1'-D_2')[-1]\cong \Gamma_{C}^\bullet\cO_C[-1].\]
		Since $C\cong \PP^1$, the argument is proved.
	\end{proof}
	
	\subsection{Spherical objects on Calabi-Yau 3-folds}
	
		Assume now that $X$ is Calabi-Yau, so that $\omega_{X}$ is trivial. Let $D$ be a complete, connected, rational hypersurface in $X$, with embedding $i\colon D\to X$. 
	
	\begin{proposition}[\hspace*{-0.4em}{\cite[Proposition~5.1]{DZ}}]\label{prop.sph}
		$i_*\mathcal{O}_{D}$ is a spherical object in $D(X)$.
	\end{proposition}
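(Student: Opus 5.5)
We need to prove that $i_*\mathcal{O}_D$ is spherical. Recall that $X$ is a Calabi--Yau 3-fold and $D$ is a complete, connected, rational hypersurface. By Definition~\ref{definition.sph}, two things must be checked: that $i_*\cO_D\otimes\omega_X\cong i_*\cO_D$, and that $\dHomk(i_*\cO_D,i_*\cO_D)\cong\C\oplus\C[-3]$.

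The first condition is immediate, because $\omega_X\cong\cO_X$. For the second, we compute the local Ext sheaves using the Koszul resolution $0\to\cO_X(-D)\to\cO_X\to i_*\cO_D\to 0$. This gives
\[\mathcal{H}om_X(i_*\cO_D,i_*\cO_D)\cong i_*\cO_D\oplus i_*\cO_D(D)[-1],\]
which is the case $D_1=D_2=D$ of the formula behind Proposition~\ref{prop.otimes}, with an additional degree-$0$ term. The splitting of the complex into its cohomology sheaves is standard for a Cartier divisor; alternatively, one can avoid it by using the spectral sequence below. By adjunction, $\cO_D(D)\cong\omega_D\otimes i^*\omega_X^\vee\cong\omega_D$.

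Taking global sections, there is a spectral sequence
\[E_2^{p,q}=H^p(D,\mathcal{E}xt^q)\Rightarrow \Hom^{p+q},\]
with $\mathcal{E}xt^0=\cO_D$ and $\mathcal{E}xt^1=\omega_D$. It therefore suffices to show $H^\bullet(D,\cO_D)=\C$ and $H^\bullet(D,\omega_D)=\C[-2]$.

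Since $D$ is complete, connected and rational, and is a (possibly singular) surface, we have $H^0(\cO_D)=\C$. The higher cohomology $H^1(\cO_D)=H^2(\cO_D)=0$ follows from rationality, since these are birational invariants for rational surfaces with rational singularities; in the toric setting relevant later, $D$ is a smooth projective toric surface and this vanishing is standard. Serre duality on $D$ then gives $H^i(\omega_D)\cong H^{2-i}(\cO_D)^\vee$, so $H^\bullet(\omega_D)=\C[-2]$. Consequently the $E_2$ page consists of $\C$ in bidegree $(0,0)$ and $\C$ in bidegree $(2,1)$. All differentials vanish for degree reasons, and we obtain $\dHomk(i_*\cO_D,i_*\cO_D)\cong\C\oplus\C[-3]$.

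The main obstacle is the vanishing of $H^{1}$ and $H^2$ of $\cO_D$. This rests on rationality together with enough regularity of $D$ (smooth, or at worst rational singularities, and Cohen--Macaulay so that Serre duality applies with $\omega_D$). I would state that these hypotheses are implicit in the setting, since $D$ is a smooth toric exceptional surface.
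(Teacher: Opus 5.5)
Your argument is correct and is essentially the paper's proof: the paper observes that $\cO_D$ is exceptional on $D$ (since $D$ is rational and connected) and that $i^*\omega_X$ is trivial, and then cites \cite[Proposition~3.15]{ST}. The proof of that proposition is exactly your computation (Koszul resolution, $\cO_D(D)\cong\omega_D$ by adjunction, Serre duality on $D$), so you have unpacked the citation in the case $E=\cO_D$. Your caveat that the vanishing of $H^{1}(\cO_D)$ and $H^{2}(\cO_D)$ needs $D$ smooth (or with rational singularities) is fair, and the paper leaves it equally implicit, while the Cohen--Macaulay condition you also list is automatic because $D$ is a Cartier divisor in a smooth variety.
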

	\begin{proof}
		Since $D$ is rational and connected, $\mathcal{O}_{D}$ is an exceptional object in $D(D)$. $X$ is a smooth quasi-projective variety and $i\colon D\to X$ is an embedding of a complete connected hypersurface. Since $X$ is Calabi-Yau, $i^* \omega_{X}$ is trivial. Thus, by \cite[Proposition~3.15]{ST}, $i_*\mathcal{O}_{D}$ is spherical in $D(X)$. 
	\end{proof}
	
	\begin{remark}
		In this paper, whenever we refer to a spherical object $\cE\in D(X)$, we always assume it is of the form $\cE\cong i_*\cG$ for some $\cG\in D(D)$. For spherical objects $\cF,\cE\in D(X)$, Serre duality implies (see \cite[before Proposition~3.15]{ST}) that 
		\[\dHomk(\cF,\cE)=\dHomk(\cE,\cF)^\vee[- d],\]
		and in particular the spherical twist $T_\cE$ is also an autoequivalence of $D(X)$.
	\end{remark}
	
	From now on, we assume that $X$ is a smooth Calabi-Yau 3-fold and that $C$ is isomorphic to $\PP^1$. We also assume that all Cartier divisors $D_k$ and $D_k'$ defined in Section~\ref{sec.cal} are complete, connected, and rational. Then by Proposition~\ref{prop.sph} and Lemma~\ref{lemma.sph}(1), $\cE_k\coloneqq i_{k*}\cO_{D_k}(-C_k)$ is a spherical object in $D(X)$ for $k=1,2$. 
	
	\begin{lemma}\label{lemma.sum}
		Assume $\dim X=3$ and $C\cong \PP^1$. Let $D_k$ be smooth divisors and let $c_k$ denote the self-intersection number of $C$ in $D_k$, for $k=1,2$. Then 
		\[
		c_1 + c_2 = -2.
		\]
	\end{lemma}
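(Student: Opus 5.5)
The plan is to compare two descriptions of the normal bundle of $C$ in $X$ and use the Calabi--Yau condition $\omega_X\cong\cO_X$ together with adjunction. First, since $D_1$ and $D_2$ are smooth divisors meeting along $C\cong\PP^1$, I assume the intersection is transversal along $C$ (as the setup with $C$ Cartier in each $D_k$ and smooth divisors suggests). Then the normal bundle splits:
\[
N_{C/X}\cong N_{C/D_1}\oplus N_{C/D_2}\cong \cO_C(c_1)\oplus\cO_C(c_2),
\]
where $N_{C/D_k}\cong\cO_C(C)|_{D_k}$ has degree equal to the self-intersection $c_k$ of $C$ in $D_k$. An equivalent formulation, which avoids the splitting, is $N_{C/D_1}\cong \cO_X(D_2)|_C$ (since $C=D_1\cap D_2$ scheme-theoretically). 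By Lemma~\ref{lemma.number}, $\cO_C(D_2)\cong\cO_C(c_1)$ and $\cO_C(D_1)\cong\cO_C(c_2)$, so $N_{C/X}\cong \cO_C(D_1)\oplus\cO_C(D_2)$, in agreement with the splitting above.

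Next, apply adjunction to $C\subset X$: $\omega_C\cong \omega_X|_C\otimes\det N_{C/X}$. Since $X$ is Calabi--Yau, $\omega_X|_C\cong\cO_C$, so $\det N_{C/X}\cong\omega_C\cong\cO_{\PP^1}(-2)$. Taking degrees gives $c_1+c_2=\deg\det N_{C/X}=-2$.

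Alternatively, and this is the version I would write down to stay within tools already used in the paper, adjunction on the surface $D_2$ gives $\omega_C\cong(\omega_{D_2}\otimes\cO_{D_2}(C))|_C$, and adjunction in $X$ gives $\omega_{D_2}\cong\cO_{D_2}(D_2)$. Hence
\[
-2=\deg\omega_C=\deg\cO_C(D_2)+C^2_{D_2}=c_1+c_2,
\]
using Lemma~\ref{lemma.number} for the first term.

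The main obstacle is justifying the transversality, namely that $C=D_1\cap D_2$ is reduced, so that $\cO_{D_2}(D_1)|_{D_2}\cong\cO_{D_2}(C)$. This holds under the standing assumption that $C$ is the scheme-theoretic intersection and a Cartier divisor in each $D_k$, which is implicit in Lemma~\ref{lemma.number}. Given that, the rest is a degree computation.
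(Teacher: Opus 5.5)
Your proof is correct, but it takes a different route from the paper's. The paper never invokes adjunction. It computes $\mathcal{H}om_X(i_{2*}\cO_{D_2}(-C), i_{1*}\cO_{D_1})\cong i_{1*}j_{1*}\cO_C(D_1+D_2)[-1]$ via Corollary~\ref{coro.Homspace}, and rewrites $\cO_C(D_1+D_2)\cong\cO_C(c_1+c_2)$ using Lemma~\ref{lemma.number}. It then combines Serre duality on the Calabi--Yau $X$ with Corollary~\ref{coro.Hom}, which gives $\dHomk(i_{1*}\cO_{D_1}, i_{2*}\cO_{D_2}(-C))=\C[-1]$. This yields $\Gamma^\bullet_C\cO_C(c_1+c_2)\cong\C[-1]$, which forces degree $-2$ on $C\cong\PP^1$.

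Your second version replaces the global duality step with the two adjunction formulas $\omega_{D_2}\cong\cO_{D_2}(D_2)$ and $\omega_C\cong(\omega_{D_2}\otimes\cO_{D_2}(C))|_C$. In effect, the paper's Ext computation encodes this same adjunction through Serre duality on $X$.

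Your route is shorter and entirely local along $C$. It needs neither completeness of the $D_k$ nor the spherical-object duality, only that $\omega_X|_C$ has degree zero. It also makes the general identity $c_1+c_2=-2-K_X\cdot C$ visible. The paper's route has the advantage of reusing the $\mathcal{H}om$ machinery it needs anyway for the later configuration computations.

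The transversality you flag ($D_1\cap D_2=C$ scheme-theoretically) is equally implicit in the paper, through Proposition~\ref{prop.otimes} and Lemma~\ref{lemma.number}. It holds in all the toric applications, since distinct torus-invariant divisors on a smooth toric variety are locally coordinate hyperplanes.
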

	
	\begin{proof}
		By Corollary~\ref{coro.Homspace} and Lemma~\ref{lemma.number}, we have
		\[
		\mathcal{H}om_X(i_{2*}\cO_{D_2}(-C),\, i_{1*}\cO_{D_1}) \cong i_{1*} j_{1*} \cO_C(D_1 + D_2)[-1] \cong i_{1*} j_{1*} \cO_C(c_1 + c_2)[-1].
		\]
		
		Taking global sections gives
		\[
		\dHomk(i_{2*}\cO_{D_2}(-C), i_{1*}\cO_{D_1}) \cong \Gamma_C^\bullet \cO_C(c_1+c_2)[-1].
		\]
		
		By Serre duality and Corollary~\ref{coro.Hom}, this is isomorphic to
		\[
		\dHomk(i_{1*}\cO_{D_1}, i_{2*}\cO_{D_2}(-C))^\vee[-3] \cong \C[-2].
		\]
		
		Therefore $\Gamma_C^\bullet \cO_C(c_1+c_2) \cong \C[-1]$. 
		Since $C \cong \PP^1$, this implies $\cO_C(c_1+c_2) \cong \cO_{\PP^1}(-2)$, hence $c_1 + c_2 = -2$.
	\end{proof}

	\begin{lemma}\label{lemma.extcon}
		Assume $D_2'\sim D_1$ and $D_1'\cap D_2=\varnothing$ and $C\cong\PP^1$.
		Let $D_{12}\coloneqq D_1\cup D_2$ and $i_{12}\colon D_{12}\to X$ be the closed embedding.
		Then $T_{\cE_1}\cE_2\cong i_{12*}\cO_{D_{12}}(-D_1')$.
	\end{lemma}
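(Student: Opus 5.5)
The plan is to compare both objects with the unique non-split extension of $\cE_1$ by $\cE_2$. Here $\cE_k = i_{k*}\cO_{D_k}(-C_k)$ with $C_k = D_k\cap D_k'$. Concretely: first compute $\dHomk(\cE_1,\cE_2)$, then write $T_{\cE_1}\cE_2$ as an extension, then show that $i_{12*}\cO_{D_{12}}(-D_1')$ is a non-split extension of the same two sheaves.

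\emph{Step 1 (the Hom space).} By Corollary~\ref{coro.Homspace},
\[
\mathcal{H}om_X(\cE_1,\cE_2)\cong i_{2*}j_{2*}\cO_C(D_1+D_1'-D_2')[-1].
\]
Since $D_2'\sim D_1$, the class $D_1-D_2'$ is trivial. Since $D_1'\cap D_2=\varnothing$, we have $\cO_C(D_1')\cong\cO_C$. Hence $\dHomk(\cE_1,\cE_2)\cong\Gamma^\bullet_C\cO_C[-1]\cong\C[-1]$, using $C\cong\PP^1$; this is Corollary~\ref{coro.Hom} with $D_3'=D_1'$.

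Therefore
\[
T_{\cE_1}\cE_2\cong\operatorname{Cone}(\cE_1[-1]\xrightarrow{\ \eta\ }\cE_2),
\]
where $\eta$ spans $\operatorname{Ext}^1(\cE_1,\cE_2)\cong\C$. Rotating the triangle, $T_{\cE_1}\cE_2$ is the middle term of the extension $0\to\cE_2\to T_{\cE_1}\cE_2\to\cE_1\to0$ with class $\eta\neq0$. Because this Ext group is one-dimensional, every non-split extension of $\cE_1$ by $\cE_2$ has a middle term isomorphic to $T_{\cE_1}\cE_2$: rescaling the class does not change the middle object.

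\emph{Step 2 (the geometric extension).} On $X$, take the standard sequence for the union of two Cartier divisors:
\[
0\to\cO_{D_2}(-D_1)\to\cO_{D_1+D_2}\to\cO_{D_1}\to0.
\]
Here the kernel is $\cO_X(-D_1)/\cO_X(-D_1-D_2)$. Tensor it with the line bundle $\cO_X(-D_1')$. The right-hand term becomes $\cO_{D_1}(-D_1')=\cO_{D_1}(-C_1)$, which pushes forward to $\cE_1$. The left-hand term becomes $\cO_{D_2}(-D_1-D_1')$. Because $D_1'\cap D_2=\varnothing$ and $D_1\sim D_2'$, this is $\cO_{D_2}(-D_2')=\cO_{D_2}(-C_2)$, which pushes forward to $\cE_2$. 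So $i_{12*}\cO_{D_{12}}(-D_1')$ is an extension of $\cE_1$ by $\cE_2$. One must check that these identifications are genuine isomorphisms of sheaves on $D_2$, not just equalities of classes; this holds because restricting a linear equivalence gives an isomorphism of line bundles.

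\emph{Step 3 (non-splitting).} This is the main point to argue carefully. The scheme $D_{12}$ is reduced, complete, and connected, since $C\neq\varnothing$. Hence
\[
\operatorname{End}\bigl(i_{12*}\cO_{D_{12}}(-D_1')\bigr)=H^0(\cO_{D_{12}})=\C.
\]
A split extension $\cE_1\oplus\cE_2$ has endomorphism ring at least $\C^2$. So the extension is non-split, its class is a nonzero multiple of $\eta$, and by Step 1 we conclude $T_{\cE_1}\cE_2\cong i_{12*}\cO_{D_{12}}(-D_1')$.

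The obstacle I expect is bookkeeping rather than a hard idea. The $\operatorname{End}$ computation needs $\mathcal{H}om_{D_{12}}$ of a line bundle on $D_{12}$ to be $\cO_{D_{12}}$, and needs the $\Hom$ on $X$ to reduce to this via the closed embedding $i_{12}$; both are standard.
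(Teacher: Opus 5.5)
Your proposal is correct and follows essentially the same route as the paper. Both arguments identify $T_{\cE_1}\cE_2$ as the unique non-split extension of $\cE_1$ by $\cE_2$ (via Corollary~\ref{coro.Hom}), and both realize $i_{12*}\cO_{D_{12}}(-D_1')$ as such an extension by twisting the standard sequence for $D_1\cup D_2$ by $\cO_X(-D_1')$. Your endomorphism-ring argument, $\operatorname{End}=H^0(\cO_{D_{12}})=\C$, is a sound justification of the paper's one-line assertion that the extension is non-split because $D_1\cap D_2\neq\varnothing$.
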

	\begin{proof}
		Since $D_1$ and $D_2$ are smooth, then we have the following short exact sequence
		\begin{equation}\label{ext.con1}
			0\longrightarrow\mathcal{I}_{D_{2}|C}\longrightarrow\cO_{D_{12}}\longrightarrow\cO_{D_{1}}\longrightarrow0,
		\end{equation}
		on $D_{12}$. The extension is non-split since $D_{1}\cap D_{2}$ is non-empty. In addition, $\mathcal{I}_{D_{2}|C}\cong \cO_{D_2}(-C)$ since $D_2$ and $C$ are smooth and $C$ is a Cartier divisor in $D_2$.
		
		Applying $i_{12*}$ to \eqref{ext.con1}, we get the exact triangle in $D(X)$
		\begin{equation}\label{ext.con2}
			i_{2*}\cO_{D_2}(-C)\longrightarrow i_{12*}\cO_{D_{12}}\longrightarrow i_{1*}\cO_{D_{1}},
		\end{equation}
		Applying $\otimes\cO_{X}(-D_1')$ to \eqref{ext.con2}, we get the exact triangle
		\begin{equation}\label{ext.con3}
			 i_{2*}\cO_{D_2}(-C)\longrightarrow i_{12*}\cO_{D_{12}}(-D_1')\longrightarrow i_{1*}\cO_{D_{1}}(-C_1),
		\end{equation}
		since $D_2\cap D_1'=\varnothing$.
		 
		By Corollary~\ref{coro.Hom} and \cite[Lemma~2.3(2)]{DZ}, the spherical twist $T_{\cE_1} \cE_2$ fits into the unique non-split extension
		\begin{equation}\label{ext.con4}
			 \cE_2\longrightarrow T_{\cE_1}\cE_2\longrightarrow \cE_1,
		\end{equation} 
		
		 Combining \eqref{ext.con3} and \eqref{ext.con4}, the claim is shown, since $\cE_2\cong i_{2*}\cO_{D_2}\otimes \cO_X(-D_2')\cong i_{2*}\cO_{D_2}\otimes \cO_X(-D_1)$, where $D_2'\sim D_1$.
	\end{proof}

	Consider the smooth Cartier divisor $D_3$ in $X$ intersects with $D_1$ and $D_2$ as shown in Figure~\ref{fig.rel1}.
	\begin{figure}[htbp]
		\centering
		\begin{tikzpicture}[scale=2,semithick,
			dot/.style = {circle, fill=black, inner sep=1pt}]
			\coordinate (O) at (0,0);
			\coordinate (V2) at (90:1); 
			\coordinate (V3) at (210:1); 
			\coordinate (V1) at (330:1);
			\coordinate (S2) at (-90:1); 
			\coordinate (S1) at (150:1); 
			\coordinate (S3) at (30:1);
			\coordinate (V21) at ($ (V3) !1/2! (S2) $);
			\coordinate (V22) at ($ (O) !1/2! (V1) $);

			\draw (S3) -- (V2) -- (S1) -- (V3) -- (S2) -- (V1) -- cycle; 
			\draw (V1) -- (O);
			\draw (V2) -- (O);
			\draw (V3) -- (O);

			\node[label={180:${D}_1$}] at (S1) {};
			\node[label={270:${D}_2$}] at (S2) {};
			\node[label={0:${D}_3$}] at (S3) {};
			\node[label={-30:$C_{23}$}] at (V1) {};
			\node[label={210:$C$}] at (V3) {};
			\node[label={90:$C_{13}$}] at (V2) {};
			\node[dot,label={270:$p$}] at (O) {};
		\end{tikzpicture}
		\caption{The relation among $D_1$, $D_2$ and $D_3$}
		\label{fig.rel1}
	\end{figure}
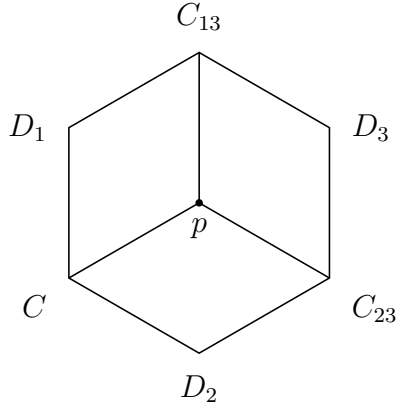 
	
Let $C_{k3} \coloneqq D_k \cap D_3$ for $k=1,2$. Denote $C_3=C_{13}\cup C_{23}$ and $j_3\colon  C_3\to D_{12}$. Let $\{p\}\coloneqq D_1\cap D_2\cap D_3$.
	
	\begin{lemma}\label{lemma.homexten}
		Assume
		\begin{itemize}
			\item $C_{k3}\cong\PP^1$ for $k=1,2$;
			\item $\dHomk(\cE_3,\cE_2)=\C[-2]$ and $\dHomk(\cE_3,\cE_1)=\C[-1]$;
			\item $T_{\cE_1}\cE_2\cong i_{12*}\cM$, where $\cM$ is a line bundle on $D_{12}$.
		\end{itemize}
		Then there is a line bundle $\cK\coloneqq\cO_{C_3}(D_3+D_3')\otimes j_3^*\cM$ on $C_3$ such that
		\begin{itemize}
			\item[(1)] \(\mathcal{H}om_X(i_{3*}\cO_{D_3}\otimes\cE_3,T_{\cE_1}\cE_2)\cong i_{12*}j_{3*}\cK[-1]\);
			\item[(2)] \(\cK|_{C_{13}}\cong  j_{31}^*\cM\otimes \cO_{C_{13}}(D_1')\);
			\item[(3)] \(\cK|_{C_{23}}\cong  j_{32}^*\cM\otimes\cO_{C_{23}}(D_2'-2)\).
		\end{itemize}
	\end{lemma}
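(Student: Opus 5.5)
The plan is to compute the sheaf $\mathcal{H}om$ in (1) directly from the Koszul resolution of $\cO_{D_3}$, in the spirit of Proposition~\ref{prop.otimes}. I then identify the restrictions of $\cK$ to the two components of $C_3$ by comparing degrees with the two hypotheses on $\dHomk(\cE_3,\cE_k)$. Throughout I read the source object in (1) as $\cE_3=i_{3*}\cO_{D_3}\otimes\cO_X(-D_3')\cong i_{3*}\cO_{D_3}(-C_3')$, where $C_3'=D_3\cap D_3'$, as in Section~\ref{sec.cal}.

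\emph{Step 1 (part (1)).} Since $\cO_X(-D_3')$ is locally free, I first pull it out:
\[
\mathcal{H}om_X(\cE_3,T_{\cE_1}\cE_2)\cong\mathcal{H}om_X(i_{3*}\cO_{D_3},i_{12*}\cM)\otimes\cO_X(D_3').
\]
Next I use the resolution $\cO_X(-D_3)\to\cO_X$ of $i_{3*}\cO_{D_3}$. This presents $\mathcal{H}om_X(i_{3*}\cO_{D_3},i_{12*}\cM)$ as the complex $i_{12*}\cM\to i_{12*}\cM(D_3)$ in degrees $0$ and $1$.

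The key geometric input is that $D_3$ contains neither $D_1$ nor $D_2$. Hence a local equation of $D_3$ is a nonzerodivisor on $\cO_{D_{12}}$, because $D_{12}=D_1\cup D_2$ is a Cartier divisor with no embedded components. It follows that this map is injective. Its cokernel is $i_{12*}j_{3*}(j_3^*\cM\otimes\cO_{C_3}(D_3))$, where $C_3=D_3\cap D_{12}=C_{13}\cup C_{23}$.

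Twisting by $\cO_X(D_3')$ then gives
\[
\mathcal{H}om_X(\cE_3,T_{\cE_1}\cE_2)\cong i_{12*}j_{3*}\cK[-1],\qquad \cK=\cO_{C_3}(D_3+D_3')\otimes j_3^*\cM,
\]
which is (1). This is the same computation as \cite[Proposition~2.5]{DZ} and Proposition~\ref{prop.otimes}, now applied with the reducible divisor $D_{12}$ in place of $D_2$.

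\emph{Step 2 (part (2)).} Restricting $\cK$ to $C_{13}$ gives $j_{31}^*\cM\otimes\cO_{C_{13}}(D_3+D_3')$. So it suffices to show that $\cO_{C_{13}}(D_3+D_3'-D_1')$ is trivial. By Corollary~\ref{coro.Homspace}, applied to the pair $(D_3,D_1)$,
\[
\dHomk(\cE_3,\cE_1)\cong\Gamma^\bullet_{C_{13}}\cO_{C_{13}}(D_3+D_3'-D_1')[-1].
\]
The hypothesis $\dHomk(\cE_3,\cE_1)=\C[-1]$ forces $\Gamma^\bullet$ of this line bundle on $C_{13}\cong\PP^1$ to be $\C$. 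The only line bundle on $\PP^1$ with this cohomology is $\cO_{\PP^1}$, so the bundle has degree $0$ and is trivial, giving (2).

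\emph{Step 3 (part (3)).} The same argument on $C_{23}$ gives
\[
\dHomk(\cE_3,\cE_2)\cong\Gamma^\bullet_{C_{23}}\cO_{C_{23}}(D_3+D_3'-D_2')[-1]=\C[-2].
\]
Hence $\Gamma^\bullet_{C_{23}}\cO_{C_{23}}(D_3+D_3'-D_2')=\C[-1]$. On $\PP^1$ this cohomology singles out $\cO_{\PP^1}(-2)$, exactly as in the proof of Lemma~\ref{lemma.sum}. Therefore $\cO_{C_{23}}(D_3+D_3')\cong\cO_{C_{23}}(D_2'-2)$, which gives (3).

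I expect the main obstacle to be Step 1. Proposition~\ref{prop.otimes} and \cite[Proposition~2.5]{DZ} were stated for a smooth target divisor meeting $D_1$ in a smooth curve. Here the target $D_{12}$ is reducible and $C_3$ is a nodal union of two $\PP^1$'s meeting at $p$. So I will redo the Koszul argument directly, checking that the local equation of $D_3$ is regular on $D_{12}$ and that the cokernel is supported on the Cartier divisor $C_3\subset D_{12}$, rather than quoting Proposition~\ref{prop.otimes} verbatim. Steps 2 and 3 are then routine degree computations on $\PP^1$.
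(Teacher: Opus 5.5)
Your proposal is correct and follows the paper's proof. Part (1) is the Corollary~\ref{coro.Homspace}/Proposition~\ref{prop.otimes} computation with the reducible divisor $D_{12}$ as target; the paper simply cites the corollary, and your direct Koszul argument with the nonzerodivisor check is what justifies that application. Parts (2) and (3) come from Corollary~\ref{coro.Homspace} for the pairs $(D_3,D_1)$ and $(D_3,D_2)$, together with the cohomology of line bundles on $\PP^1$. In Step~3 you correctly write $D_3+D_3'-D_2'$; the paper's proof writes $D_1'$ at that point, which is a typo.
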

	\begin{proof}
		 Using Corollary~\ref{coro.Homspace}, (1) is given by
		\[\mathcal{H}om_X(\cE_3,T_{\cE_1}\cE_2)\cong i_{12*}(j_{3*}\cO_{C_3}(D_3+D_3')\otimes\cM)[-1]\cong i_{12*}j_{3*}(\cK)[-1]\]
		
		Again by Corollary~\ref{coro.Homspace}, we have
		\[\mathcal{H}om_X(\cE_3,\cE_1)\cong i_{1*}j_{31*}\cO_{C_{13}}(D_3+D_3'-D_1')[-1]\]
		Since $\dHomk(\cE_3,\cE_1)=\C[-1]$ and $C_{13}\cong\PP^1$, then $\cO_{C_{13}}(D_3+D_3'-D_1')\cong \cO_{C_{13}}$ i.e., $\cO_{C_{13}}(D_3+D_3')\cong \cO_{C_{13}}(D_1')$. 
		Thus, (2) follows from
		\(\cK|_{C_{13}}\cong \cO_{C_{13}}(D_3+D_3')\otimes j_{31}^*\cM\).
		
		Similarly, since $\dHomk(\cE_3,\cE_2)=\C[-2]$ and $C_{23}\cong\PP^1$, then $\cO_{C_{23}}(D_3+D_3'-D_1')\cong \cO_{C_{23}}(-2)$ i.e., $\cO_{C_{23}}(D_3+D_3')\cong \cO_{C_{23}}(D_2'-2)$. 
		Thus, (3) is given by 
		\(\cK|_{C_{23}}\cong \cO_{C_{23}}(D_3+D_3')\otimes j_{32}^*\cM\).	
	\end{proof}
	
	\begin{lemma}\label{lemma.perpgen}
		Keep the conditions of Lemma~\ref{lemma.homexten} and assume $\dHomk(\cE_1,\cE_2)=\C[-1]$, $\cK|_{C_{13}}=\cO_{C_{13}}$ and $\cK|_{C_{23}}=\cO_{C_{23}}(-1)$. Then $T_{\cE_1}\cE_2\in \cE_3^\perp$.
	\end{lemma}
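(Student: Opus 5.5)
Since $\cE_3 = i_{3*}\cO_{D_3}(-C_3')$ is of the form $i_{3*}\cO_{D_3}\otimes\cO_X(-D_3')$, Lemma~\ref{lemma.homexten}(1) expresses the sheaf-Hom as $i_{12*}j_{3*}\cK[-1]$. Taking global sections therefore gives
\[
\dHomk(\cE_3,T_{\cE_1}\cE_2)\cong \Gamma^\bullet_{C_3}(\cK)[-1],
\]
and it suffices to show that $H^\bullet(C_3,\cK)=0$. The plan is to compute this cohomology on the reducible curve $C_3=C_{13}\cup C_{23}$ via the normalization (gluing) sequence.

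First I will record the geometry of $C_3$. Both components $C_{13}$ and $C_{23}$ are isomorphic to $\PP^1$. They meet at the single point $p=D_1\cap D_2\cap D_3$, transversally, so $C_3$ is a nodal chain of two rational curves. For the line bundle $\cK$ on $C_3$ there is then the short exact sequence
\[
0\longrightarrow \cK\longrightarrow \cK|_{C_{13}}\oplus\cK|_{C_{23}}\longrightarrow \cK|_{p}\longrightarrow 0 .
\]
By hypothesis, together with Lemma~\ref{lemma.homexten}(2),(3), the middle term is $\cO_{\PP^1}\oplus\cO_{\PP^1}(-1)$. Its cohomology is $\C$ in degree $0$ and vanishes in all other degrees, since $\cO(-1)$ on $\PP^1$ has no cohomology.

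The map on $H^0$ sends a constant section on $C_{13}$ to its value at $p$, viewed in the fiber $\cK_p\cong\C$. This map is nonzero and hence an isomorphism $\C\to\C$. The long exact sequence then gives $H^0(C_3,\cK)=0$ and $H^1(C_3,\cK)=0$, so $\dHomk(\cE_3,T_{\cE_1}\cE_2)=0$, which means $T_{\cE_1}\cE_2\in\cE_3^\perp$.

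The main obstacle is justifying that $C_3$ really is the scheme-theoretic union $C_{13}\cup C_{23}$, reduced and nodal at $p$, so that the gluing sequence holds and $\cK$ is an honest line bundle on it. I would argue this from the smoothness and transversality of the divisors: $D_3\cap D_{12}$ is a Cartier divisor in $D_3$ equal to $C_{13}+C_{23}$, and these meet transversally at $p$ because $D_1$, $D_2$, $D_3$ meet transversally there (in the toric setting, $p$ is a torus-fixed point with local coordinates given by the three divisors). The hypothesis $\dHomk(\cE_1,\cE_2)=\C[-1]$ enters only through the identification of $T_{\cE_1}\cE_2$ with $i_{12*}\cM$ as the non-split extension; it is already built into the assumptions of Lemma~\ref{lemma.homexten}.
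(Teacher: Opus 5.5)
Your proof is correct, but it handles the cohomology computation differently from the paper. The paper first uses the triangle $\cE_2\to T_{\cE_1}\cE_2\to\cE_1$, which is where the hypothesis $\dHomk(\cE_1,\cE_2)=\C[-1]$ enters. Together with $\dHomk(\cE_3,\cE_2)=\C[-2]$ and $\dHomk(\cE_3,\cE_1)=\C[-1]$, this shows that $\Hom^i(\cE_3,T_{\cE_1}\cE_2)$ vanishes for $i\neq 1,2$ and that $\hom^1=\hom^2$, so everything reduces to $H^0(C_3,\cK)=0$. The paper checks this by hand: it restricts a global section to an affine chart $\Spec \C[x,y]/(xy)$ around $p$ and compares it with its restrictions to the two components, which are a constant on $C_{13}$ and zero on $C_{23}$.

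You instead apply the normalization sequence of the nodal curve $C_3$ to $\cK$. This gives $H^0(C_3,\cK)=H^1(C_3,\cK)=0$ in one step. Once $T_{\cE_1}\cE_2\cong i_{12*}\cM$ is granted, the triangle, the dimension count and the extra hypothesis are no longer needed. Your observation that evaluation $H^0(\cO_{C_{13}})\to\cK|_p$ is an isomorphism is exactly what the paper's local section-gluing computation establishes.

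Both arguments rest on the same geometric input: $C_3=D_3\cap D_{12}$ is, scheme-theoretically, the union of two $\PP^1$'s meeting transversally in the reduced point $p$. The paper uses this implicitly through its local model, and you state it explicitly. Your route is shorter and more standard and yields the full cohomology at once. The paper's route only ever needs the $H^0$ computation, with $H^1$ obtained indirectly from the triangle.
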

	\begin{proof}
		Using~\cite[Lemma~2.3(2)]{DZ}, we have the exact triangle
		\begin{equation}\label{seq.exten}
			\cE_2\rightarrow T_{\cE_1}\cE_2\rightarrow\cE_1
		\end{equation}
		Applying $\Hom^*(\cE_3,\sim)$ to \eqref{seq.exten}, we have $\Hom^*(\cE_3,\cE_2)=\C[-2]$ and $\Hom^*(\cE_3,\cE_1)=\C[-1]$, by Serre duality. We immediately see that $\Hom^i(\cE_3,T_{\cE_1}\cE_2)=0$ for $i\neq1,2$. By Euler characteristic, we have 
		\begin{equation*}
			\hom^1(\cE_3,T_{\cE_1}\cE_2)=\hom^2(\cE_3,T_{\cE_1}\cE_2).
		\end{equation*}
		By Lemma~\ref{lemma.homexten}, we only need to show,
		\[\Hom^1(\cE_3,T_{\cE_1}\cE_2)=H^0(C_{3},\cK)=0.\]
		
		We argue that $\Gamma(C_3,\cK)=0$ by considering an affine open containing the point $p$. Let $U\coloneqq C_3\setminus\{q_1,q_2\}$, with $ q_i\in C_{i3}$ and $q_i\neq p$, for $i=1,2$. Denote also $U_i=C_{i3}\setminus \{q_i\}$, for $i=1,2$. Then there are fibre squares
		\begin{equation}\label{eq.fibre}
			\begin{tikzcd}
				{C_{13}} && {C_3} && {C_{23}} \\
				{U_1} && {U} && {U_2}
				\arrow[hook, from=1-1, to=1-3]
				\arrow[hook', from=1-5, to=1-3]
				\arrow[hook, from=2-1, to=1-1]
				\arrow[hook, from=2-1, to=2-3]
				\arrow[hook, from=2-3, to=1-3]
				\arrow[hook', from=2-5, to=1-5]
				\arrow[hook', from=2-5, to=2-3]
			\end{tikzcd}
		\end{equation}
		The bottom row of (\ref{eq.fibre}) is affine, we can write it as
		\begin{equation*}
			\begin{tikzcd}
				\Spec \mathbf{k}[x] && \Spec \mathbf{k}[x,y]/xy && \Spec \mathbf{k}[y]
				\arrow[hook, from=1-1, to=1-3]
				\arrow[hook', from=1-5, to=1-3]
			\end{tikzcd}
		\end{equation*}
		\par
		Taking global sections of \eqref{eq.fibre} yields the commutative diagram
		\begin{equation*}
			\begin{tikzcd}
				{\mathbf{k}\simeq\Gamma_{C_{13}}\cK|_{C_{13}}} && {\Gamma_{C_{3}}\cK} && {\Gamma_{C_{23}}\cK|_{C_{23}}\simeq 0} \\
				{\mathbf{k}[x]\simeq\Gamma_{U_{1}}\cK|_{U_{1}}} && {\Gamma_{U}\cK|_{U}} && {\Gamma_{U_{2}}\cK|_{U_{2}}\simeq \mathbf{k}[y].}
				\arrow[from=1-1, to=2-1]
				\arrow[from=1-3, to=1-1]
				\arrow[from=1-3, to=1-5]
				\arrow[from=1-3, to=2-3]
				\arrow[from=1-5, to=2-5]
				\arrow[from=2-3, to=2-1]
				\arrow[from=2-3, to=2-5]
			\end{tikzcd}
		\end{equation*}
		The left-hand map is the obvious inclusion. Hence for every section $s\in\Gamma_{C_3}\cK$, we
		get the following relation
		\begin{equation*}
			\begin{tikzcd}
				&& {s} \\
				{\mathbf{k}[x]\supset\mathbf{k}\ni c} && {s|_U} && {0\in\mathbf{k}[y]}
				\arrow[maps to, from=1-3, to=2-3]
				\arrow[maps to, from=2-3, to=2-1]
				\arrow[maps to, from=2-3, to=2-5]
			\end{tikzcd}
		\end{equation*}
		The left-hand arrow gives that $s|_U\in\mathbf{k}+(y)$, and the right-hand arrow gives that $s|_U\in(x)$. But $(x)\cap(\mathbf{k}+(y))=0$, hence $s|_U=0$. Hence $s|_{U_i\setminus\{p\}}=0$ which implies $s|_{C_{i3}\setminus\{p\}}=0$ because $C_{i3}\setminus\{p\}\simeq\mathbb{A}^1$ is integral. But $\{C_{i3}\setminus\{p\}\}_{i=1,2} \cup \{U\}$ is an open cover of $C_3$, so $s=0$ by the sheaf condition.

		Thus $\Hom^1_X(\mathcal{E}_3,T_{\cE_1}\cE_2)=\Gamma(C_3,\cK)=0$.
		Thus $\Hom^*_X(\mathcal{E}_3,T_{\cE_1}\cE_2)=0$. Then by definition $T_{\cE_1}\cE_2\in\cE_3^\perp$.
		
	\end{proof}
	
\section{Basic notions of toric geometry and Hirzebruch surfaces}\label{sec.toric}
	\subsection{Toric varieties and line bundles}
	We recall some useful properties of toric geometry and Hirzebruch surfaces, following~\cite[Section~3]{DZ}.
	
	A \define{toric variety}~\cite[Definition~3.1.1]{CLS} $X$ over $\C$ is a $d$-dimensional irreducible variety containing a dense open torus $T_N\cong (\C^*)^d$ such that the multiplication action on $T_N$ extends to an algebraic action $T_N\times X\to X$. Let $M=\Hom(T_N,\C^*)$ be an abelian group of characters of $T_N$ and $N=M^\vee$.\par 
	
	A \define{fan}~$\Sigma$ in $N_{\R}\coloneqq N\otimes_{\Z}\R$ is a finite collection of strongly convex rational polyhedral cones $\sigma\subset N_{\R}$ satisfying the usual compatibility conditions~\cite[Definition~3.1.2]{CLS}. We denote by $\Sigma(r)$ the set of $r$-dimensional cones in $\Sigma$, and write $\tau\prec \sigma$ if $\tau$ is a face of $\sigma$.\par 
	
	For any cone $\sigma\in \Sigma$, the associated affine variety is 
	\[U_\sigma = \operatorname{Spec}\!\big(\C[\sigma^\vee \cap M]\big),
	\qquad 
	\sigma^\vee = \{\, m\in M_\R \mid \langle m,n\rangle \ge 0 
	\text{ for all } n\in\sigma \,\}.
	\]
	If $\tau=\sigma_1\cap\sigma_2$ is a common face, then 
	$U_\tau = U_{\sigma_1}\cap U_{\sigma_2}$.
	Gluing these affine pieces gives the toric variety $X_\Sigma$, 
	associated with~$\Sigma$~\cite[after Definition~3.1.2]{CLS}. There is an \define{orbit-cone correspondence} \cite[Theorem~3.2.6]{CLS} between cones and $T_N$-orbits in $X_{\Sigma}$ as follows.
	\[
	\{\sigma\in\Sigma\}
	\;\Longleftrightarrow\;
	\{\text{$T_N$-orbits in } X_\Sigma\}
	\qquad
	\sigma \longmapsto O(\sigma)\cong\Hom_\Z(\sigma^\perp\cap M,\C^*)
	\]
	Moreover, $\dim O(\sigma)=d-\dim\sigma$.  
	Write $V(\sigma)\coloneqq\overline{O(\sigma)}$.  
	Then
	\[V(\sigma)=\bigcup_{\sigma\preceq\tau} O(\tau).\]

	When $X_\Sigma$ is smooth, for any ray $\rho\in \Sigma(1)$, we write $D_{\rho}\in \mathbf{Pic}(X_\Sigma)$ for the torus-invariant divisor corresponding to $V(\rho)$. The canonical divisor is given by $K_{X_\Sigma}=-\sum_{\rho\in\Sigma(1)}D_{\rho}$.
	
	For a Cartier divisor $D=\sum_{\rho\in\Sigma(1)} a_\rho D_\rho$
	on $X_\Sigma$, we construct a new fan 
	$\Sigma\times D$ in $N_\R\times\R$ such that $X_{\Sigma\times D}$ is isomorphic to the total space of the line bundle $\mathcal{O}_{X_\Sigma}(D)$ with bundle projection induced by projection away from $\R$~\cite[Chapter~7.3]{CLS}. Namely, for each cone $\sigma\in\Sigma$, let
	\begin{equation}\label{eq.timesD}
		\tilde\sigma
		\coloneqq \big\langle (\underline{0},1),\,(u_\rho,-a_\rho)\big| \rho\preceq\sigma,\,\rho\in\Sigma(1) \big\rangle,
	\end{equation}
	where $u_\rho$ denotes the primitive generator of $\rho$. Then $\Sigma\times D$ is given by the $\tilde\sigma$ for all $\sigma\in\Sigma$, along with all their faces. 
	
	\begin{proposition}[\hspace*{-0.4em}{\cite[Proposition~3.3]{DZ}}]\label{prop.timesD}
		The subvariety $V((\underline{0},1))$
		is the zero section of $X_{\Sigma\times D}$. Writing $i$ for its embedding, $V(\tilde\sigma)=i\,V(\sigma)$ for any $\sigma\in\Sigma$.
	\end{proposition}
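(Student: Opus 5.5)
We work directly with the fan $\Sigma\times D$ defined in \eqref{eq.timesD}, together with the orbit-cone correspondence. The statement has two parts: identifying $V((\underline{0},1))$ with the zero section, and showing $V(\tilde\sigma)=i\,V(\sigma)$ for every $\sigma\in\Sigma$.

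For the first part, we compute on the affine charts. For each $\sigma\in\Sigma$, the chart $U_{\tilde\sigma}$ of $X_{\Sigma\times D}$ is $\Spec\C[\tilde\sigma^\vee\cap(M\times\Z)]$. Following \cite[Chapter~7.3]{CLS}, this chart is identified with $U_\sigma\times\mathbb{A}^1$. The fibre coordinate is a character $\chi^{(m_\sigma,1)}$, where $m_\sigma\in M$ is chosen with $\langle m_\sigma,u_\rho\rangle=a_\rho$ for $\rho\preceq\sigma$. Such an $m_\sigma$ exists because $D$ is Cartier. With this choice, $\chi^{(m_\sigma,1)}$ pairs to $1$ with $(\underline{0},1)$ and to $0$ with each $(u_\rho,-a_\rho)$. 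The divisor $V((\underline{0},1))\cap U_{\tilde\sigma}$ is the vanishing locus of the character dual to the ray $(\underline{0},1)$, which is exactly $\chi^{(m_\sigma,1)}$. So on each chart it is $U_\sigma\times\{0\}$. These local descriptions glue, since the transition functions of the fibre coordinate are the characters $\chi^{m_\sigma-m_{\sigma'}}$, which are linear in the fibre. Hence $V((\underline{0},1))$ is the zero section.

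For the second part, we use $V(\tilde\sigma)=\bigcup_{\tilde\sigma\preceq\tau}O(\tau)$. The cones of $\Sigma\times D$ containing $(\underline{0},1)$ are exactly the $\tilde\tau$ for $\tau\in\Sigma$, so $V(\tilde\sigma)\subset V((\underline{0},1))$. Next, the map $\tau\mapsto\tilde\tau$ is an inclusion-preserving bijection from $\Sigma$ onto the star of the ray $(\underline{0},1)$. Under this bijection, the orbit $O(\tilde\tau)$ in the zero section corresponds to $O(\tau)$ in $X_\Sigma$. To see this, note that $\tilde\tau^\perp\cap(M\times\Z)=\{(m,-\langle m,\cdot\rangle\text{-corrected})\}$ projects isomorphically onto $\tau^\perp\cap M$ via $(m,k)\mapsto m+k\,m_\tau$ (sign as appropriate). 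This gives an equivariant identification $O(\tilde\tau)\cong O(\tau)$ compatible with the projection. Taking unions over $\tau\succeq\sigma$ then gives $V(\tilde\sigma)=i\,V(\sigma)$. Equivalently, the star of the ray $(\underline{0},1)$ projects onto $\Sigma$ in $N_\R$, which is the standard description of the orbit closure of a ray as a toric variety.

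The main obstacle is the lattice bookkeeping in the orbit identification. We must check that the projection $N\times\Z\to N$, composed with the quotient by $(\underline{0},1)$, carries $\tilde\tau$ onto $\tau$. This holds because $(u_\rho,-a_\rho)\mapsto u_\rho$. We must also check that this identification agrees with the bundle projection $X_{\Sigma\times D}\to X_\Sigma$, which is induced by the same lattice map; this compatibility is what makes the image precisely $i\,V(\sigma)$.
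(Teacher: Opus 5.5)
The paper gives no argument here; it imports the statement from \cite[Proposition~3.3]{DZ}. As a self-contained proof, your proposal is correct: the chart computation identifies the zero section, and the orbit--cone correspondence handles $V(\tilde\sigma)$. The first part can be made airtight in one line. Every $(m,k)\in\tilde\sigma^\vee\cap(M\times\Z)$ is uniquely $(m'+k\,m_\sigma,k)$ with $m'\in\sigma^\vee\cap M$ and $k\ge 0$. Hence $\C[\tilde\sigma^\vee\cap(M\times\Z)]=\C[\sigma^\vee\cap M][t]$ with $t=\chi^{(m_\sigma,1)}$. The prime ideal of $V((\underline{0},1))\cap U_{\tilde\sigma}$ is spanned by the characters pairing positively with $(\underline{0},1)$, so it is exactly $(t)$.

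In the second part, three points should be tightened.

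First, your description of $\tilde\tau^\perp$ is garbled, and the correction by $m_\tau$ is unnecessary. Since $(\underline{0},1)\in\tilde\tau$, every $(m,k)\in\tilde\tau^\perp$ has $k=0$, so $\tilde\tau^\perp\cap(M\times\Z)=(\tau^\perp\cap M)\times\{0\}$ on the nose. The map $m\mapsto(m,0)$ from $\tau^\perp\cap M$ to this lattice is the dual of the projection $N\times\Z\to N$. Because that projection carries $\tilde\tau$ onto $\tau$, the bundle projection $\pi$ maps $O(\tilde\tau)$ isomorphically onto $O(\tau)$.

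Second, you need the equality $O(\tilde\tau)=i\,O(\tau)$, not just an abstract isomorphism. Say explicitly that $O(\tilde\tau)\subset V((\underline{0},1))=i(X_\Sigma)$ and $\pi\circ i=\mathrm{id}$, so $O(\tilde\tau)=i\pi(O(\tilde\tau))=i\,O(\tau)$. Taking the union over $\tau\succeq\sigma$ then gives $V(\tilde\sigma)=i\,V(\sigma)$.

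Third, you assert without justification that the cones of $\Sigma\times D$ containing $(\underline{0},1)$ are exactly the $\tilde\tau$, with $\tilde\sigma\preceq\tilde\tau$ when $\sigma\preceq\tau$. This deserves one line. The lattice automorphism $(u,l)\mapsto(u,l+\langle m_\sigma,u\rangle)$ of $N\times\Z$ carries $\tilde\sigma$ onto $\sigma\times\R_{\ge 0}$. The faces of $\sigma\times\R_{\ge 0}$ are $\tau\times\{0\}$ and $\tau\times\R_{\ge 0}$ for $\tau\preceq\sigma$, and the latter pull back to $\tilde\tau$. The converse inclusion of faces follows by projecting to $N_\R$.
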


	\subsection{Hirzebruch surfaces and the blow-up of Hirzebruch surfaces}
	The Hirzebruch surface $\mathbb{F}_e$ is the projective bundle $\mathbb{P}(\cO_{\mathbb{P}^1}(-e)\oplus\cO_{\mathbb{P}^1})$ on ${\mathbb{P}^1}$. Denote by $C_0$ the projectivization of the section~$(0,1)$ in $\mathbb{F}_e$.
	According to~\cite[Example~4.1.8]{CLS}, the fan of $\mathbb{F}_e$ is given by Figure~\ref{fig.hizfan}, whose rays are denoted by $v_l$ for $l=1,\dots,4$. 
		\begin{figure}[htbp]
		\centering
		\begin{tikzpicture}[scale=0.5]
			\coordinate (V0) at (0,0);
			\coordinate (V1) at (2,0);
			\coordinate (V2) at (0,3);
			\coordinate (V3) at (0,-3);
			\coordinate (V4) at (-1,3);
			
			\draw (V1) -- (V0); 
			\draw (V2) -- (V0); 
			\draw (V3) -- (V0); 
			\draw (V4) -- (V0); 
			
			\node[circle,fill,inner sep=1pt,label={-45:$\mathbb{F}_e$}] at (V0) {};
			
			\node[right] at (V1) {$v_3(1,0)$};
			\node[above right] at (V2) {$v_2(0,1)$};
			\node[below right] at (V3) {$v_4(0,-1)$};
			\node[left]        at (V4) {$v_1(-1,e)$};
		\end{tikzpicture}  
		\caption{The fan of $\mathbb{F}_e$}
		\label{fig.hizfan}
	\end{figure}
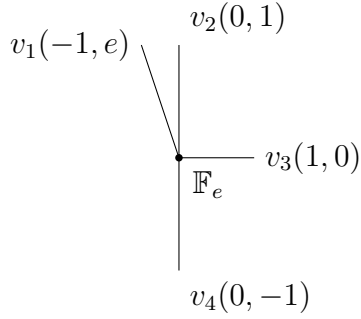
	
	Using \cite[Proposition~6.3.8]{CLS}, the intersection numbers are given as follows:
		\begin{itemize}
			\item $V(v_2)=C_0$ with $C_0^2=-e$ in $\mathbb{F}_e$.
			\item For $l=1,3$, $V(v_l)$ is a fiber of $\mathbb{F}_e$, satisfying $V(v_l)^2=0$ and $V(v_l)\cdot C_0=1$.
			\item $V(v_4)^2=e$, $V(v_4)\cdot C_0=0$, and $V(v_4)\cdot V(v_l)=1$ for $l=1,3$.
		\end{itemize}
		
		Similarly, the fan of $\mathbb{F}_e(1)$, the blow-up of $\mathbb{F}_e$ at one point, is obtained by adding the ray $v_5=v_3+v_4$ of the fan of $\F_e$ as shown in Figure~\ref{fig.hizfan(1)}. The fan of $\mathbb{F}_e(2)$, the blow-up of $\F_e$ at two points, is given by adding the ray $v_6=v_1+v_4$ of the fan of $\F_e(1)$ as shown in Figure~\ref{fig.hizfan(2)}.
	\begin{figure}[htbp]
		\centering
		\begin{minipage}[b]{0.48\textwidth}
			\centering
			\begin{tikzpicture}[scale=0.5]
				\coordinate (V0) at (0,0);
				\coordinate (V1) at (2,0);
				\coordinate (V2) at (0,3);
				\coordinate (V3) at (0,-3);
				\coordinate (V4) at (-1,3);
				\coordinate (V5) at (2,-2);
				
				\draw (V1) -- (V0); 
				\draw (V2) -- (V0); 
				\draw (V3) -- (V0); 
				\draw (V4) -- (V0); 
				\draw (V5) -- (V0);
				
				\node[circle,fill,inner sep=1pt,label={-135:$\mathbb{F}_e(1)$}] at (V0) {};
				
				\node[right] at (V1) {$v_3(1,0)$};
				\node[above right] at (V2) {$v_2(0,1)$};
				\node[below right] at (V3) {$v_4(0,-1)$};
				\node[left]        at (V4) {$v_1(-1,e)$};
				\node[below right] at (V5) {$v_5(1,-1)$};
			\end{tikzpicture} 
			\caption{The fan of $\mathbb{F}_e(1)$}
			\label{fig.hizfan(1)}
		\end{minipage}
		\hfill
		\begin{minipage}[b]{0.48\textwidth}
			\centering
			\begin{tikzpicture}[scale=0.5]
				\coordinate (V0) at (0,0);
				\coordinate (V1) at (2,0);
				\coordinate (V2) at (0,3);
				\coordinate (V3) at (0,-2);
				\coordinate (V4) at (-1,3);
				\coordinate (V5) at (2,-2);
				\coordinate (V6) at (-1,1);
				
				\draw (V1) -- (V0); 
				\draw (V2) -- (V0); 
				\draw (V3) -- (V0); 
				\draw (V4) -- (V0); 
				\draw (V5) -- (V0); 
				\draw (V6) -- (V0); 
				
				\node[circle,fill,inner sep=1pt,label={-135:$\mathbb{F}_e(2)$}] at (V0) {};
				
				\node[right] at (V1) {$v_3(1,0)$};
				\node[above right] at (V2) {$v_2(0,1)$};
				\node[below right] at (V3) {$v_4(0,-1)$};
				\node[left]        at (V4) {$v_1(-1,e)$};
				\node[below right] at (V5) {$v_5(1,-1)$};
				\node[left]        at (V6) {$v_6(-1,e-1)$};
		\end{tikzpicture}  
			\caption{The fan of $\mathbb{F}_e(2)$}
			\label{fig.hizfan(2)}
		\end{minipage}
	\end{figure}
	
	Also using \cite[Proposition~6.3.8]{CLS}, the self-intersection numbers in $\F_e(1)$ are given as follows:
	\begin{itemize}
		\item $V(v_1)^2=0$ in $\mathbb{F}_e(1)$.
		\item $V(v_2)^2=-e$ in $\mathbb{F}_e(1)$.
		\item For $l=3,5$, $V(v_l)^2=-1$ in $\mathbb{F}_e(1)$.
		\item $V(v_4)^2=e-1$ in $\mathbb{F}_e(1)$.
	\end{itemize}
	The self-intersection numbers in $\F_e(2)$ are given as follows:
	\begin{itemize}
		\item $V(v_2)^2=-e$ in $\mathbb{F}_e(2)$.
		\item For $l=1,3,5,6$, $V(v_l)^2=-1$ in $\mathbb{F}_e(2)$.
		\item $V(v_4)^2=e-2$ in $\mathbb{F}_e(2)$.
	\end{itemize}

\section{Exceptional surfaces in $G\text{-Hilb}(\C^3)$ and their intersections}\label{sec.exc}
	
	Let $G\subset \SL(3,\C)$ be a diagonal cyclic subgroup acting on $\C^3$ and write elements of $G$ as $\diag(\xi^{a},\xi^{b},\xi^{c})$ where $\xi$ is a primitive $r^{\text{th}}$ root of unity. The lattice for the fan $\Sigma_{\C^3/G}$ of the quotient $\C^3/G$ is given by the overlattice $L\supset\Z^3$ generated by
	\[
	L = \Z^3 + \Big\langle \tfrac{1}{r}(a,b,c)\;\Big|\; \diag(\xi^a,\xi^b,\xi^c)\in G,\ a,b,c\ge 0 \Big\rangle.
	\]

	Write $G\text{-}\Hilb(\C^3)$ for the $G$-orbit Hilbert scheme over $\C^3$. It parametrizes $G$-invariant smoothable zero-dimensional subschemes of $\C^3$ of length $ |G|$~\cite{Nakamura}. For convenience, we set $X(1,s,r-s-1)=G\text{-}\Hilb(\C^3)$ for $G=\mu_r$ with weights $(1,s,r-s-1)$.\par
	
	According to~\cite{IR,Reid,CR,Craw}, the fan $\Sigma_{X(1,s,r-s-1)}$ of $X(1,s,r-s-1)$ is a refinement of $\Sigma_{\C^3/G}$ by inserting additional rays as follows.
	\begin{equation*}
		\Sigma_{X(1,s,r-s-1)}(1)=\Sigma_{\C^3/G}(1)\:\bigcup\:\big\{\tfrac{1}{r}(a,b,c)\big|
		\diag(\xi^{a},\xi^{b},\xi^{c})
		\in G,\ a+b+c=r\big\}
	\end{equation*}
	By~\cite{Nakamura}, the projective toric resolution 
	\begin{equation*}
		f\colon  X(1,s,r-s-1)\longrightarrow \C^3/G
	\end{equation*}
	given by this refinement is a projective crepant resolution, hence $X(1,s,r-s-1)$ is Calabi-Yau.\par
	
	By~\cite[Proposition~11.1.10]{CLS}, the irreducible compact exceptional surfaces in $X(1,s,r-s-1)$ are precisely the torus-invariant divisors $V(\rho)$ corresponding to the new rays
	\[
	\rho \in \Sigma_{X(1,s,r-s-1)}(1) \setminus \Sigma_{\C^3/G}(1).
	\]
	
	Any three 1-dimensional cones $\rho_1,\rho_2,\rho_3\in \Sigma_{X(1,s,r-s-1)}(1)$ such that $\langle \rho_1,\rho_2,\rho_3\rangle$ forms a 3-dimensional cone generate the lattice $L$. Moreover, for each $k=1,2,3$ there exists a lattice isomorphism $\phi_k\colon L\to \Z^3$ sending $\rho_k$ to $e_3=(0,0,1)$. This yields a local coordinate system in which the three-dimensional cone can be identified with the standard positive orthant in $\Z^3$.
	
	Following~\cite[Section~4]{DZ}, we consider the subfan $\tilde\Sigma_S$ consisting of all cones in $\Sigma_{X(1,s,r-s-1)}$ that have $\rho_S$ as a face. 
	There is a lattice isomorphism $\varphi_S\colon  L\to \Z^3$ such that $\varphi_S((a_S,b_S,c_S))=(0,0,1)$, and $\tilde\Sigma_S$ is compatible with $\Sigma_S\times K_S$ defined in \eqref{eq.timesD} under $\varphi$, where $\Sigma_S$ is the fan of $S$ in $\Z^2$ and $K_S$ is the torus-invariant canonical divisor of $S$.

	 By~\cite[Corollary~1.6]{CR}, there are four types of subfans.
	 \begin{proposition}\label{prop.exceptional types}
	 	Irreducible compact exceptional surfaces in $X(1,s,r-s-1)$ can be determined as follows:
	 	\begin{itemize}
	 		\item[(1)] if there are three 3-dimensional cones in $\Sigma_S$, $S$ is isomorphic to $\PP^2$;
	 		\item[(2)] if there are four 3-dimensional cones in $\Sigma_S$, $S$ is isomorphic to $\F_e$;
	 		\item[(3)] if there are five 3-dimensional cones in $\Sigma_S$, $S$ is isomorphic to $\F_e(1)$;
	 		\item[(4)] if there are six 3-dimensional cones in $\Sigma_S$, $S$ is isomorphic to $\F_e(2)$.
	 	\end{itemize}
	 \end{proposition}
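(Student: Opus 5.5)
The classification is a statement about the star of the ray $\rho_S$ in $\Sigma_{X(1,s,r-s-1)}$. I will therefore pass to the complete two-dimensional fan $\Sigma_S$ and identify the smooth complete toric surface from its number of maximal cones. Here ``3-dimensional cones in $\Sigma_S$'' means the 3-dimensional cones of $\tilde\Sigma_S$, which correspond bijectively to the 2-dimensional cones of $\Sigma_S$.

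\emph{Reduction to $\Sigma_S$.} Apply the lattice isomorphism $\varphi_S\colon L\to\Z^3$ sending $\rho_S$ to $(0,0,1)$. As recalled above, $\tilde\Sigma_S$ is compatible with $\Sigma_S\times K_S$. By Proposition~\ref{prop.timesD} and the orbit-cone correspondence, $S=V(\rho_S)$ is the toric surface whose fan $\Sigma_S$ is the quotient fan $\{(\sigma+\R\rho_S)/\R\rho_S \mid \rho_S\preceq\sigma\}$ in $\R^3/\R\rho_S\cong\R^2$. Thus the 3-dimensional cones of $\tilde\Sigma_S$ correspond bijectively to the 2-dimensional cones of $\Sigma_S$, and the number of these equals the number $n$ of rays of $\Sigma_S$.

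\emph{Properties of $\Sigma_S$.} Every 3-dimensional cone of $\Sigma_{X}$ is generated by a basis of $L$, so $X(1,s,r-s-1)$ is smooth. Hence $\Sigma_S$ is a smooth fan. Since $S$ is compact, $\rho_S$ lies in the interior of the junior simplex, so the star of $\rho_S$ surrounds it. Consequently $\Sigma_S$ is complete and $S$ is a smooth complete toric surface with $n$ rays.

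\emph{Classification by $n$.} I will use the classification of smooth complete toric surfaces \cite[Theorem~10.4.3]{CLS}: every such surface is $\PP^2$, or $\F_e$, or is obtained from one of these by successive torus-invariant blow-ups. Each blow-up adds exactly one ray, namely the sum of two adjacent rays. Therefore $n=3$ forces $S\cong\PP^2$ and $n=4$ forces $S\cong\F_e$.

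\emph{The cases $n=5$ and $n=6$.} For $n=5$ the surface is a single blow-up of some $\F_e$. Up to an $\mathrm{SL}_2(\Z)$-change of coordinates, which re-indexes $e$, I can arrange the new ray to be $v_5=v_3+v_4$ as in Figure~\ref{fig.hizfan(1)}. This gives $S\cong\F_e(1)$. (One also notes that $\PP^2$ blown up once is $\F_1$, and blowing it up once more is an $\F_1(1)$.)

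For $n=6$ the surface is obtained by two blow-ups of some $\F_e$. The second blown-up point could a priori lie on the exceptional curve of the first blow-up, in which case the result need not be of the form in Figure~\ref{fig.hizfan(2)}. This is the step I expect to be the main obstacle. I will try two approaches:
\begin{itemize}
\item First, I will use the constraint from \cite[Corollary~1.6]{CR} on the self-intersection numbers of the curves in compact exceptional surfaces of $G$-$\Hilb$. Equivalently, the junior-simplex geometry in \cite{CR} restricts the local configuration of rays around $\rho_S$ in such a way that the two added rays can be taken to be $v_3+v_4$ and $v_1+v_4$. This yields $\F_e(2)$.
\item Alternatively, I will argue that some self-intersection sequence of $\Sigma_S$ admits a contraction of two disjoint $(-1)$-curves meeting the same section. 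Then reversing the blow-ups gives the standard $\F_e(2)$ fan.
\end{itemize}
Finally, I will invoke \cite[Corollary~1.6]{CR} to rule out $n\ge7$, since it shows that every star has at most six maximal cones. This completes the case list.
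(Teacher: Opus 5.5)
Your proof is correct in outline but is organized differently from the paper, whose entire justification is the citation of \cite[Corollary~1.6]{CR}. You derive the cases $n=3,4,5$ from the classification of smooth complete toric surfaces \cite[Theorem~10.4.3]{CLS}. This is worthwhile: it shows these three cases hold for every smooth complete toric surface and use nothing about $G$-$\Hilb(\C^3)$. The $G$-$\Hilb$-specific content is thereby isolated in the case $n=6$ and in the bound $n\le 6$, and for those you rely on the same citation as the paper.

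One small correction for $n=5$. Blowing up a fixed point of the negative section $C_0\subset\F_e$ gives $\F_{e+1}(1)$, via the shear $(x,y)\mapsto(x,y-x)$. Blowing up $\langle v_4,v_1\rangle$ needs the reflection $(x,y)\mapsto(-x,y+ex)$. So you want $\mathrm{GL}_2(\Z)$ rather than $\mathrm{SL}_2(\Z)$; the conclusion is unaffected.

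Your second approach to $n=6$ cannot succeed from toric generalities alone, and the obstruction is wider than infinitely near points. Blow up $\F_1$ at both fixed points of the fibre $V(v_3)$, i.e.\ add the rays $(1,1)$ and $(1,-1)$. The resulting six-ray surface has cyclic self-intersection sequence $(-2,-1,-2,0,0,-1)$. Since negative curves on a toric surface are torus-invariant, it has only two $(-1)$-curves. Every $\F_e(2)$ has at least four, so this surface is not of that form.

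What you need from \cite{CR} is a pair of opposite rays $\pm u$ in $\Sigma_S$ with exactly two rays of $\Sigma_S$ on each side of the line $\R u$. Smoothness then forces the two rays on one side to be $w$ and $w-u$ (and similarly on the other side), and all four give $(-1)$-curves. Choose $u$ so that $V(u)^2\ge V(-u)^2$; note $V(u)^2+V(-u)^2=-2$. Contracting the two $(-1)$-curves meeting $V(u)$ then exhibits $S$ as $\F_e$ blown up at two points of its positive section, in different fibres. That is exactly $\F_e(2)$, and it is your second approach made rigorous. Without such input, citing \cite[Corollary~1.6]{CR} directly, as the paper does, is the right move.
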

	 
	 For two irreducible compact exceptional surfaces $S_1$ and $S_2$, similar to \cite[Proposition~4.4]{DZ}, by using Proposition~\ref{prop.timesD}, the non-empty intersection $C_{S_1,S_2}\coloneqq V(\rho_{S_1})\cap V(\rho_{S_2})$ is isomorphic to $V(\langle\rho_{S_1},\rho_{S_2}\rangle) \cong V(\tau_{S_1}) \cong V(\tau_{S_2}) \cong \PP^1$. Here, $\tau_{S_1}$ is the 1-dimensional cone in $\Sigma_{S_1}$ such that $\varphi_{S_1}(\langle \rho_{S_1},\rho_{S_2}\rangle)=\widetilde\tau_{S_1}$, and $\tau_{S_2}$ is the 1-dimensional cone in $\Sigma_{S_2}$ such that $\varphi_{S_2}(\langle \rho_{S_1},\rho_{S_2}\rangle)=\widetilde\tau_{S_2}$.
	 In addition, $C_{S_1,S_2}^2$ in $S_k$ is $V(\tau_{S_k})^2$ for $k=1,2$. 
	
	Thus, if we know an irreducible compact exceptional surface in $X(1,s,r-s-1)$, then the remaining exceptional surfaces in $X(1,s,r-s-1)$ are determined by Lemma~\ref{lemma.sum} and Proposition~\ref{prop.exceptional types}.
	
	\subsection{Example}
	For example, the junior simplex for $X(1,3,9)$ is shown in Figure~\ref{fig.js-13cycle} and $\rho_k$ is a 1-dimensional cone in $\Sigma_{X(1,3,9)}$.
	\begin{figure}[htbp]
		\begin{center}
			
			\tdplotsetmaincoords{-145}{90}
			
			\begin{tikzpicture}[tdplot_main_coords,scale=1/2]
				
				\tdplotsetrotatedcoords{-90}{45}{90}
				
				\begin{scope}[tdplot_rotated_coords]
					\coordinate (O) at (0,0,0);
					\coordinate (P1) at (13,0,0); 
					\coordinate (P2) at (0,13,0); 
					\coordinate (P3) at (0,0,13);
					\coordinate (Q1) at (9,1,3);
					\coordinate (Q2) at (3,9,1);
					\coordinate (Q3) at (1,3,9);
					\coordinate (R1) at (5,2,6);
					\coordinate (R2) at (6,5,2);
					\coordinate (R3) at (2,6,5);
				\end{scope}
				
				\draw (P1) -- (P2) -- (P3) -- cycle; 
				\draw (R1) -- (R2) -- (R3) -- cycle; 
				\draw (P1) -- (Q3);
				\draw (P2) -- (Q1);
				\draw (P3) -- (Q2);
				\draw (P1) -- (Q2);
				\draw (P2) -- (Q3);
				\draw (P3) -- (Q1);
				\draw (P1) -- (R2);
				\draw (P2) -- (R3);
				\draw (P3) -- (R1);
				
				\node[circle,fill,inner sep=1pt,label={90:$\rho_7(0,0,1)$}] at (P1) {};
				\node[circle,fill,inner sep=1pt,label={0:$\rho_8(1,0,0)$}] at (P2) {};
				\node[circle,fill,inner sep=1pt,label={180:$\rho_9(0,1,0)$}] at (P3) {};
				\node[circle,fill,inner sep=1pt,label={-90:$\rho_3(\frac{3}{13},\frac{9}{13},\frac{1}{13})$}] at (Q3) {};
				\node[circle,fill,inner sep=1pt,label={0:$\rho_5(\frac{9}{13},\frac{1}{13},\frac{3}{13})$}] at (Q2) {};
				\node[circle,fill,inner sep=1pt,label={0:$\rho_1(\frac{1}{13},\frac{3}{13},\frac{9}{13})$}] at (Q1) {};
				\node[circle,fill,inner sep=1pt,label={-90:$\rho_4(\frac{6}{13},\frac{5}{13},\frac{2}{13})$}] at (R3) {};
				\node[circle,fill,inner sep=1pt,label={-90:$\rho_6(\frac{5}{13},\frac{2}{13},\frac{6}{13})$}] at (R2) {};
				\node[circle,fill,inner sep=1pt,label={-90:$\rho_2(\frac{2}{13},\frac{6}{13},\frac{5}{13})$}] at (R1) {};
				
			\end{tikzpicture}
		\end{center}
		\caption{The junior simplex of $X(1,3,9)$}
		\label{fig.js-13cycle}
	\end{figure}
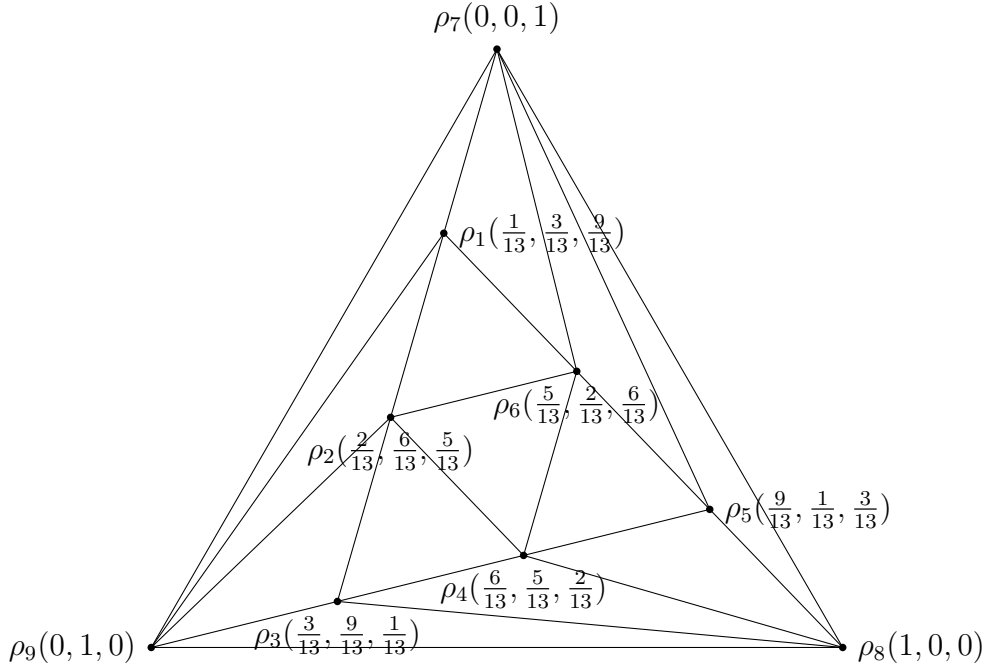
	
	Denote exceptional surfaces in $X(1,3,9)$ by $S_k \coloneqq V(\rho_k)$ for $1\leq k\leq 6$, with closed embeddings $i_k\colon S_k\to X(1,3,9)$. Let $C_{k,l} \coloneqq S_k\cap S_l \cong V(\langle\rho_k,\rho_l\rangle)$ whenever $S_k\cap S_l\neq\varnothing$.
	
	\begin{proposition}\label{prop.excsurfin139}
		The irreducible compact exceptional surfaces in $X(1,3,9)$ are as follows:
		\begin{itemize}
			\item[(1)] $S_{2k}\cong \F_2(1)$, for $k=1,2,3$;
			\item[(2)] $S_{2k-1}\cong \F_3$, for $k=1,2,3$.  
		\end{itemize}
	\end{proposition}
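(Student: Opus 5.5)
By Proposition~\ref{prop.exceptional types}, it suffices to determine for each $\rho_k$, $1\le k\le 6$, the number of $3$-dimensional cones of $\Sigma_{X(1,3,9)}$ containing $\rho_k$, and then the integer $e$ of the fan $\Sigma_{S_k}$. First I would reduce the work by symmetry. Multiplication by $3$ modulo $13$ sends the weights $(1,3,9)$ to $(3,9,1)$. Cyclically permuting the coordinates of $\Z^3$ therefore preserves the group $G$, the lattice $L$ and the junior simplex. This permutation acts on the junior simplex of Figure~\ref{fig.js-13cycle} by $\rho_1\mapsto\rho_3\mapsto\rho_5$ and $\rho_2\mapsto\rho_4\mapsto\rho_6$, since it sends $\tfrac1{13}(1,3,9)\mapsto\tfrac1{13}(3,9,1)$ and so on. It induces an automorphism of $X(1,3,9)$ permuting the $S_k$ accordingly. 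Hence it is enough to treat $S_1$ and $S_2$.

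Next I would read off the link of $\rho_1$ and of $\rho_2$ in the triangulation of the junior simplex, using the edges drawn in Figure~\ref{fig.js-13cycle} together with the edges of the inner triangle and the segments joining the inner and middle points. I expect $\rho_1$ to lie in exactly four triangles and $\rho_2$ in exactly five. Then Proposition~\ref{prop.exceptional types} gives $S_1\cong\F_e$ and $S_2\cong\F_{e'}(1)$ for some $e,e'\ge 0$. The relevant triangles are those with a vertex at a corner $\rho_7,\rho_8,\rho_9$ or at an interior point. To be sure which segments are genuine edges, I would double-check that each triangle is unimodular: its three vertices must generate $L$, so the determinant must equal $1/13$ in the standard coordinates.

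To find $e$, I would use the standard toric relation for a ray $\rho_S$ with a neighbouring ray $\rho$, where $\rho$ is flanked in the link by $\rho'$ and $\rho''$. Writing
\[
\rho'+\rho''=a\,\rho+b\,\rho_S ,
\]
the self-intersection of $C=V(\langle\rho_S,\rho\rangle)$ inside $S$ is $-a$, and $b$ computes the other normal degree. This is equivalent to projecting the link via the lattice isomorphism $\varphi_S$, which sends $\rho_S$ to $(0,0,1)$, to obtain $\Sigma_S\subset\Z^2$ and then comparing with Figures~\ref{fig.hizfan} and~\ref{fig.hizfan(1)}.

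For $S_1$, I would compute these relations for each of its four neighbours. This should give self-intersections $(-3,0,3,0)$, and hence $S_1\cong\F_3$. For $S_2$, it should give $(-2,0,-1,-1,1)$ or a cyclic rotation, hence $S_2\cong\F_2(1)$. As a consistency check, I would verify Lemma~\ref{lemma.sum} on each shared curve. For instance, the curve $C_{1,2}$ should have self-intersections summing to $-2$ in $S_1$ and $S_2$. This also allows the value of $e$ in one surface to be propagated to its neighbours, as remarked after Proposition~\ref{prop.exceptional types}.

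The main obstacle is purely computational. It is to identify correctly the triangulation from the picture and to solve the linear relations in the overlattice $L$ with fractional coordinates. I would first express everything in the $\Z$-basis of $L$ given by one unimodular cone containing $\rho_S$, so that all relations become integral, before reading off $a$.
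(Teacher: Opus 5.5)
Your plan is correct and is essentially the paper's argument. In both, the fan of $S$ is the star of $\rho_S$ taken modulo $\rho_S$. The paper does this once, matching the star of $\rho_4$ with $\Sigma_{\F_2(1)}\times K_{\F_2(1)}$ through the relation $\rho_8-\rho_4=2(\rho_5-\rho_4)-(\rho_6-\rho_4)$. It then gets $S_3\cong\F_3$ from $C_{34}^2=1$ in $S_4$, via Lemma~\ref{lemma.sum} and the cone count. You instead compute the wall relations around $\rho_1$ and $\rho_2$ directly, and your prediction $(-3,0,3,0)$ for $S_1$ is right.

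The one slip is your predicted sequence for $S_2$. The neighbours of $\rho_2$ in cyclic order are $\rho_1,\rho_9,\rho_3,\rho_4,\rho_6$, with wall relations $\rho_9+\rho_6=3\rho_2-\rho_1$, $\rho_1+\rho_3=2\rho_2$, $\rho_9+\rho_4=2\rho_3$, $\rho_3+\rho_6=\rho_2+\rho_4$ and $\rho_4+\rho_1=\rho_2+\rho_6$. These give self-intersections $(1,0,-2,-1,-1)$, which is the $\F_2(1)$ sequence up to rotation and reflection. Your $(-2,0,-1,-1,1)$ is neither a rotation nor a reflection of it, and no smooth complete toric surface has that sequence. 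So the conclusion $S_2\cong\F_2(1)$ has to come from actually carrying out the computation, not from the predicted list.
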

	\begin{proof}
		By rotational symmetry, we only need to show
		$S_4\cong \F_2(1)$ and $S_3\cong\F_3$.
		
		Denote the subfan generated by $\{\rho_2,\rho_3,\rho_4,\rho_5,\rho_6,\rho_8\}$ as $\Sigma_4$. The junior simplex $\triangle_4$ is then given by $\Sigma_4 \cap \{x+y+z=1\}$, as shown in Figure~\ref{fig.js-subfan}. The fan of $\F_2(1)$ is shown in Figure~\ref{fig.H2(1)}.
		\begin{figure}[htbp]
			\centering
			\begin{minipage}[b]{0.45\textwidth}
				\begin{center}
					
					\tdplotsetmaincoords{-145}{90}
					
					\begin{tikzpicture}[tdplot_main_coords,scale=0.5]
						
						\tdplotsetrotatedcoords{-90}{45}{90}
						
						\begin{scope}[tdplot_rotated_coords]
							\coordinate (O) at (0,0,0);
							
							\coordinate (P2) at (0,13,0); 
							\coordinate (Q2) at (3,9,1);
							\coordinate (Q3) at (1,3,9);
							\coordinate (R1) at (5,2,6);
							\coordinate (R2) at (6,5,2);
							\coordinate (R3) at (2,6,5);
						\end{scope}

						\draw (R1) -- (R2) -- (R3) -- cycle; 
						\draw (Q3) -- (Q2);
						\draw (P2) -- (Q3);
						\draw (P2) -- (R3);
						\draw (P2) -- (R2);
						\draw (Q3) -- (R1);


						\node[circle,fill,inner sep=1pt,label={270:$\rho_8(1,0,0)$}] at (P2) {};
						\node[circle,fill,inner sep=1pt,label={-90:$\rho_3(\frac{3}{13},\frac{9}{13},\frac{1}{13})$}] at (Q3) {};
						\node[circle,fill,inner sep=1pt,label={0:$\rho_5(\frac{9}{13},\frac{1}{13},\frac{3}{13})$}] at (Q2) {};
						\node[circle,fill,inner sep=1pt,label={-90:$\rho_4(\frac{6}{13},\frac{5}{13},\frac{2}{13})$}] at (R3) {};
						\node[circle,fill,inner sep=1pt,label={0:$\rho_6(\frac{5}{13},\frac{2}{13},\frac{6}{13})$}] at (R2) {};
						\node[circle,fill,inner sep=1pt,label={90:$\rho_2(\frac{2}{13},\frac{6}{13},\frac{5}{13})$}] at (R1) {};
						
					\end{tikzpicture}
				\end{center}
				\caption{The junior simplex of subfan}
				\label{fig.js-subfan}
			\end{minipage}
			\hfill
			\begin{minipage}[b]{0.48\textwidth}
				\centering
				\begin{tikzpicture}[scale=2]
					\coordinate (V0) at (0,0);
					\coordinate (V1) at (0,1);
					\coordinate (V2) at (-1,0);
					\coordinate (V3) at (1,0);
					\coordinate (V4) at (2,-1);
					\coordinate (V5) at (-1,1);
					
					\draw (V1) -- (V0); 
					\draw (V2) -- (V0); 
					\draw (V3) -- (V0); 
					\draw (V4) -- (V0); 
					\draw (V5) -- (V0); 
					
					\node[above] at (V1) {$v_6(1,0)$};
					\node[below] at (V2) {$v_3(0,1)$};
					\node[right] at (V3) {$v_5(0,-1)$};
					\node[below left]  at (V4) {$v_8(-1,-2)$};
					\node[left]        at (V5) {$v_2(1,1)$};
				\end{tikzpicture}  
				\caption{The fan of $\F_2(1)$}
				\label{fig.H2(1)}
			\end{minipage}
		\end{figure}
		
		Denote $\rho_{4}'\coloneqq(0,0,1)$ and $\rho_{k}'\coloneqq (v_k,1)$ in $\Sigma_{\F_2(1)}\times K_{\F_2(1)}$ for $k=2,3,5,6,8$. Then identifying $\rho_k$ with $\rho_k'$ for $k=2,3,4$, there exists a lattice isomorphism $\phi\colon L_{(1,3,9)}\to \Z^3$ such that $\Sigma_4$ and $\Sigma_{\F_2(1)}\times K_{\F_2(1)}$ are compatible.
		Since
		\[\rho_8-\rho_4=(\frac{7}{13},\frac{-5}{13},\frac{-2}{13}),\quad
		\rho_5-\rho_4=(\frac{3}{13},\frac{-4}{13},\frac{1}{13}),\quad
		\rho_6-\rho_4=(\frac{-1}{13},\frac{-3}{13},\frac{4}{13}),\]
		we have
		\begin{equation*}
			\rho_8-\rho_4=2(\rho_5-\rho_4)-(\rho_6-\rho_4).
		\end{equation*}
		
		Thus, using \cite{DZ}, $S_4=V(\rho_4)\cong V(\rho_4')\cong \F_2(1)$, and $C_{4,k}\cong V(\langle\rho_4,\rho_k\rangle)\cong V(\langle\rho_4',\rho_k'\rangle)\cong V(v_k)$ for $k=2,3,5,6,8$.
		
		By this isomorphism, we have $C_{34}^2=1$ in $S_4$. Then we have $C_{34}^2=-3$ in $S_3$ using Lemma~\ref{lemma.sum}. Thus, $S_3\cong \F_3$ using~\cite[Corollary~1.6]{CR}, since there are $4$ 2-dimensional cones containing $\rho_3$.
	\end{proof}
	
	Using Lemma~\ref{lemma.sum} and  Proposition~\ref{prop.exceptional types}, we get the following.
	
	\begin{proposition}\label{prop.inter.139}
		The following are self-intersection numbers for all non-empty $C_{t,l}$.
		\begin{itemize}
			\item $C_{2k-1,2k}^2=-3$ in $S_{2k-1}$ and $C_{2k-1,2k}^2=1$ in $S_{2k}$, for $k=1,2,3$;
			\item $C_{2k,2l}^2=-1$ in $S_{2k}$ and in $S_{2l}$; for $1\leq k\neq l\leq 3$;
			\item $C_{2k,2k+1}^2=-2$ in $S_{2k}$ and $C_{2k,2k+1}^2=0$ in $S_{2k+1}$ for $k=1,2,3$ with indices taken modulo $6$.
		\end{itemize}
	\end{proposition}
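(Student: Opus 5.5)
The plan is to reduce everything to the two computations already made in the proof of Proposition~\ref{prop.excsurfin139} and then propagate them with Lemma~\ref{lemma.sum} and the order-three rotational symmetry of the junior simplex. That symmetry comes from cyclically permuting coordinates, and it sends $\rho_k\mapsto\rho_{k+2}$ with indices modulo $6$. It therefore suffices to treat $k=2$, i.e.\ the surface $S_4\cong\F_2(1)$ together with its neighbours. First I read off from Figure~\ref{fig.js-13cycle} which pairs $S_t,S_l$ meet. These are exactly the pairs whose rays span a $2$-dimensional cone of $\Sigma_{X(1,3,9)}$ with both rays exceptional. The edges of the triangulation give $\{2,4\},\{4,6\},\{2,6\}$ (inner triangle), $\{3,4\}$, $\{4,5\}$, and their rotations. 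So the non-empty $C_{t,l}$ are precisely those listed in the three bullets.

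Second, I compute the self-intersections inside $S_4$. In the identification $\Sigma_4\cong\Sigma_{\F_2(1)}\times K_{\F_2(1)}$ from the proof of Proposition~\ref{prop.excsurfin139}, each curve $C_{4,k}$ corresponds to $V(v_k)$ in the fan of Figure~\ref{fig.H2(1)}. The rays there are $v_6=(1,0)$, $v_2=(1,1)$, $v_3=(0,1)$, $v_8=(-1,-2)$ and $v_5=(0,-1)$ in cyclic order. For a smooth complete toric surface with consecutive rays $u_{i-1},u_i,u_{i+1}$ satisfying $u_{i-1}+u_{i+1}=-a_i u_i$, we have $V(u_i)^2=a_i$ by \cite[Proposition~6.3.8]{CLS}. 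Applying this gives the following:
\begin{itemize}
\item for $v_3$: $v_2+v_8=(0,-1)=-v_3$, so $C_{3,4}^2=1$;
\item for $v_5$: $v_8+v_6=(0,-2)=2v_5$, so $C_{4,5}^2=-2$;
\item for $v_6$: $v_5+v_2=(1,0)=v_6$, so $C_{4,6}^2=-1$;
\item for $v_2$: $v_6+v_3=(1,1)=v_2$, so $C_{2,4}^2=-1$.
\end{itemize}
These match the first bullet ($C_{3,4}^2=1$ in $S_4$), the third bullet ($C_{4,5}^2=-2$ in $S_4$), and the second bullet ($C_{2,4}^2=C_{4,6}^2=-1$ in $S_4$).

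Third, Lemma~\ref{lemma.sum} gives the self-intersection on the other side of each curve: $-2-1=-3$ in $S_3$, $-2-(-2)=0$ in $S_5$, and $-2-(-1)=-1$ in $S_2$ and in $S_6$. These agree with the statement and with $S_3,S_5\cong\F_3$ from Proposition~\ref{prop.excsurfin139}: in $\F_3$ the negative section has self-intersection $-3$ and a fibre has self-intersection $0$. Rotating by $\rho_k\mapsto\rho_{k+2}$ then yields all the listed cases, with the index conventions ($2k-1,2k$), ($2k,2l$) and ($2k,2k+1$ modulo $6$) matching the rotated pairs.

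The main obstacle is the bookkeeping in the first two steps. One has to confirm that the adjacency of the labels $v_k$ in Figure~\ref{fig.H2(1)} really matches the adjacency of the rays $\rho_k$ around $\rho_4$ in the junior simplex, and that no further pairs of exceptional surfaces meet. I would check this directly with the lattice relation $\rho_8-\rho_4=2(\rho_5-\rho_4)-(\rho_6-\rho_4)$ and the analogous relations obtained from the coordinates in Figure~\ref{fig.js-13cycle}.
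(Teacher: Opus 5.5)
Your proposal is correct and follows essentially the same route as the paper. You read off the self-intersections of the curves $C_{4,k}$ in $S_4\cong\F_2(1)$ from the toric identification in Proposition~\ref{prop.excsurfin139}, and your wall-relation values $1,-2,-1,-1$ are right; Lemma~\ref{lemma.sum} transfers them to the neighbouring surfaces, and the order-three symmetry $\rho_k\mapsto\rho_{k+2}$ gives the remaining cases. Your adjacency bookkeeping also checks out once you note that in Figure~\ref{fig.js-13cycle} the segment from $\rho_9$ to $\rho_5$ passes through $\rho_3$ and $\rho_4$ (and similarly for its rotations), which is what produces the edges $\{3,4\}$ and $\{4,5\}$.
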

	
	Then we can draw the relation among all irreducible compact exceptional surfaces in $X(1,3,9)$, as shown in Figure~\ref{fig.rel139}. 
	\begin{figure}[htbp]
		\centering
		
		\begin{tikzpicture}[scale=1.4,font=\scriptsize]
			\hexsetup
			\hexagon{-\hs}{0}{S_1\cong\F_3}
			\hexagon{-2*\hs}{-1.5}{S_2\cong\F_2(1)}
			\hexagon{0}{-1.5}{S_6\cong\F_2(1)}
			\hexagon{-3*\hs}{-3}{S_3\cong\F_3}
			\hexagon{-\hs}{-3}{S_4\cong\F_2(1)}
			\hexagon{\hs}{-3}{S_5\cong\F_3}
			\node[fill=white, inner sep=1pt, text=red] at (-8/3*\hs,-2.4) {0};  
			\node[fill=white, inner sep=1pt, text=red] at (-7/3*\hs,-2.1) {-2};
			\node[fill=white, inner sep=1pt, text=red] at (-0.15-2*\hs,-3) {-3};  
			\node[fill=white, inner sep=1pt, text=red] at (0.15-2*\hs,-3) {1};
			\node[fill=white, inner sep=1pt, text=red] at (-5/3*\hs,-0.9) {1};  
			\node[fill=white, inner sep=1pt, text=red] at (-4/3*\hs,-0.6) {-3};
			\node[fill=white, inner sep=1pt, text=red] at (-0.15-\hs,-1.5) {-1};  
			\node[fill=white, inner sep=1pt, text=red] at (0.15-\hs,-1.5) {-1};
			\node[fill=white, inner sep=1pt, text=red] at (-4/3*\hs,-2.4) {-1};  
			\node[fill=white, inner sep=1pt, text=red] at (-5/3*\hs,-2.1) {-1};
			\node[fill=white, inner sep=1pt, text=red] at (-1/3*\hs,-0.9) {-2};  
			\node[fill=white, inner sep=1pt, text=red] at (-2/3*\hs,-0.6) {0};
			\node[fill=white, inner sep=1pt, text=red] at (-2/3*\hs,-2.4) {-1};  
			\node[fill=white, inner sep=1pt, text=red] at (-1/3*\hs,-2.1) {-1};
			\node[fill=white, inner sep=1pt, text=red] at (2/3*\hs,-2.4) {-3};  
			\node[fill=white, inner sep=1pt, text=red] at (1/3*\hs,-2.1) {1};
			\node[fill=white, inner sep=1pt, text=red] at (-0.15,-3) {-2};  
			\node[fill=white, inner sep=1pt, text=red] at (0.15,-3) {0};
		\end{tikzpicture}
		\caption{Exceptional surfaces in $X(1,3,9)$ and their intersections}
		\label{fig.rel139}
	\end{figure}
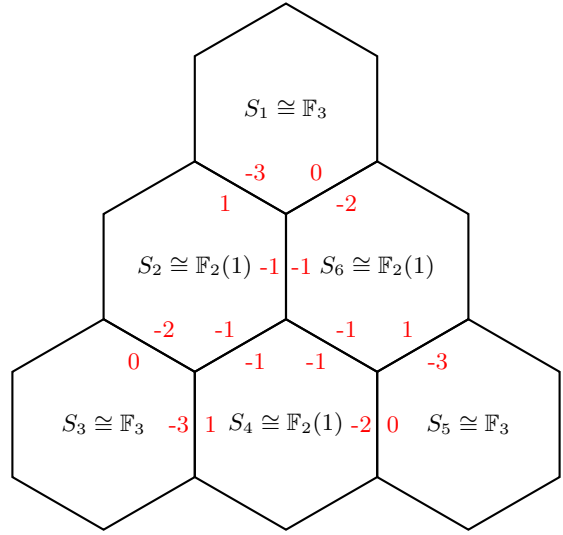
	
	In Figure~\ref{fig.rel139}, each hexagon is an irreducible compact exceptional surface $S_k$, an edge represents a $\mathbb{P}^1$ intersection curve $C_{k,l}$, and the red number adjacent to an edge inside hexagon $S_k$ is $C_{k,l}^2$ in $S_k$. A vertex where three hexagons meet indicates a triple intersection point. For example, the red number ``$-3$" in the hexagon $S_1$ near the hexagon $S_2$ means $C_{12}^2=-3$ in $S_1$. The shared point among three hexagons $S_1$, $S_2$ and $S_6$ means $S_1\cap S_2\cap S_6=\{\text{a point}\}$.

	\section{The faithful algebraic braid twist group action associated with exceptional surfaces in $X(1,3,9)$}\label{sec.139}
			
		Denote $\cE_{2l-1}\coloneqq i_{2l-1*}\cO_{S_{2l-1}}$ and $\cE_{2l}\coloneqq i_{2l*}\cO_{S_{2l}}(-C_{2l-1,2l})$ for $l=1,2,3$. Each $\cE_k$ is spherical in $D(X(1,3,9))$ by Proposition~\ref{prop.sph} and Lemma~\ref{lemma.sph}. Denote $T_k\coloneqq T_{\cE_k}$, the spherical twist associated with $\cE_k$. This yields an algebraic braid twist group action on $D(X(1,3,9))$.
		
	\begin{theorem}\label{thm.D6}
		The algebraic braid twist group $\AT(Q,W)$ associated with the quiver $(Q,W)$ in Figure~\ref{fig.quiver139} acts on $D(X(1,3,9))$ faithfully via $\beta_k\mapsto T_k$.
	\end{theorem}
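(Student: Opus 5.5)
The plan is to show that $\{\cE_1,\dots,\cE_6\}$ is a $(Q,W)$-configuration in $D(X(1,3,9))$ in the sense of Definition~\ref{def.con}. Proposition~\ref{prop.cycle} then gives the $\AT(Q,W)$-action, and Proposition~\ref{prop.D6} turns it into a $D_6$-configuration, at which point faithfulness follows from~\cite[Theorem~1]{NV}.

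\emph{Step 1: Hom dimensions.} For each pair $(k,l)$ with $S_k\cap S_l=\varnothing$ (for instance $(1,3)$, $(1,4)$, $(1,5)$, $(3,5)$, $(2,5)$, $(3,6)$, as read off from Figure~\ref{fig.rel139}), the supports are disjoint, so $\dHomk(\cE_k,\cE_l)=0$. This matches the non-adjacent pairs in $Q$.

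For the adjacent pairs, which are the edges $C_{k,l}\cong\PP^1$ of Figure~\ref{fig.rel139}, I would apply Corollary~\ref{coro.Homspace}. This gives $\dHomk(\cE_k,\cE_l)\cong\Gamma^\bullet_{C_{k,l}}\cO_C(S_k+D_k'-D_l')[-1]$, where $D_k'$ is the twisting divisor ($0$ for odd $k$, and $S_{2l-1}$ for $\cE_{2l}$). Lemma~\ref{lemma.number} computes the degree from the self-intersection numbers of Proposition~\ref{prop.inter.139}. For example, for $(2k-1,2k)$ the degree is $c_{2k}+(-1)\cdot(\text{deg of } S_{2k-1}|_C)$, which reduces via $c_1+c_2=-2$. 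Similarly, $\cO_C(S_{2l-1})$ restricted to $C_{2k,2l}$ is trivial whenever $S_{2l-1}$ misses that curve; otherwise it has degree $1$. In each case I expect a degree of $0$ or $-2$, so the total dimension is $1$, as the arrows of $Q$ require.

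\emph{Step 2: the 3-cycle conditions.} The 3-cycles in $W$ are $(1,2,6)$, $(3,4,2)$, $(5,6,4)$ (from $-a_kb_kc_k$), and $(2,6,4)$ (from $b_3b_2b_1$). By Lemma~\ref{lemma.change}, for each cycle it suffices to check one vertex. For the outer cycles, I would pick $k,l$ so that Lemma~\ref{lemma.extcon} applies: for instance, take $\cE_1=\cO_{S_1}$ and $\cE_2=\cO_{S_2}(-C_{12})$, which gives $T_1\cE_2\cong i_{12*}\cO_{S_1\cup S_2}$ twisted appropriately. Then Lemmas~\ref{lemma.homexten} and~\ref{lemma.perpgen} reduce the orthogonality to checking that $\cK$ restricts to $\cO$ on one branch of $C_3$ and to $\cO(-1)$ on the other. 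These are degree computations on the curves $C_{k,t},C_{l,t}$ via Lemma~\ref{lemma.number}.

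The inner cycle $(2,6,4)$ is the step I expect to be hardest. Here all three surfaces are $\F_2(1)$ meeting pairwise along $(-1)$-curves, and the twisting divisors $C_{2l-1,2l}$ are not of the form required by Lemma~\ref{lemma.extcon}. These are curves of self-intersection $1$ rather than fibres, so $D_2'\sim D_1$ fails. My first attempt would be to compute $T_2\cE_6$ directly as the unique non-split extension $\cE_6\to T_2\cE_6\to\cE_2$ (up to shift). I would then identify it with a line bundle $\cM$ on $S_2\cup S_6$ by gluing $\cO_{S_2}(-C_{12})$ with $\cO_{S_6}(-C_{56})$ twisted along $C_{26}$, so that Lemma~\ref{lemma.homexten} still applies. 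The remaining checks are then $\deg\cK|_{C_{24}}$ and $\deg\cK|_{C_{46}}$, which must equal $0$ and $-1$ in some order. If the orders come out wrong, I would replace the chosen vertex using Lemma~\ref{lemma.change}.

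\emph{Step 3: faithfulness.} With the configuration established, Proposition~\ref{prop.cycle} yields the action $\beta_k\mapsto T_k$, and Proposition~\ref{prop.D6} produces the $D_6$-configuration $\{\cF_i\}$. By Lemma~\ref{lemma.equivalence}(2), the twists $T_{\cF_i}$ are conjugates of words in the $T_k$, so the corresponding elements of $\AT(Q,W)$ generate it: $\beta_k$ is recovered via Lemma~\ref{lemma.self}, and the cycle relations hold. This defines a surjection $\Br(D_6)\to\AT(Q,W)$ whose composite with the action is faithful by~\cite[Theorem~1]{NV}, so the surjection is an isomorphism and the $\AT(Q,W)$-action is faithful.
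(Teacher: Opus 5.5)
Your proposal is correct and follows the paper's proof. It uses the same Hom computations via Corollary~\ref{coro.Homspace}, the same treatment of the outer 3-cycles via Lemmas~\ref{lemma.extcon}, \ref{lemma.homexten} and~\ref{lemma.perpgen}, and the same faithfulness argument via Proposition~\ref{prop.D6} and \cite[Theorem~1]{NV}; that last step relies on the same unverified group-theoretic check the paper also omits, namely that the conjugated generators satisfy the $D_6$ relations inside $\AT(Q,W)$ itself. The only local difference is the inner cycle: you test $T_2\cE_6\in\cE_4^\perp$ by gluing $\cO_{S_2}(-C_{12})$ and $\cO_{S_6}(C_{26}-C_{56})$ along $C_{26}$ (both have degree $-1$ there), which does give $\cK|_{C_{24}}\cong\cO$ and $\cK|_{C_{46}}\cong\cO(-1)$, whereas the paper tests the equivalent condition $T_4\cE_2\in\cE_6^\perp$ (Lemma~\ref{lemma.change}) using the toric linear equivalence on $S_4\cong\F_2(1)$ from Lemmas~\ref{lemma.divisorsim} and~\ref{lemma.F2F1}.
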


	\begin{proposition}\label{prop.139A2}
		We have
		\begin{itemize}
			\item $\dHomk(\cE_{2k-1},\cE_{2k})=\C[-1]$ for $k=1,2,3$;
			\item $\dHomk(\cE_{2k+2},\cE_{2k})=\C[-1]$ for $k=1,2,3$ with indices taken modulo $6$;
			\item $\dHomk(\cE_{2k},\cE_{2k+1})=\C[-1]$ for $k=1,2,3$ with indices taken modulo $6$.
			\end{itemize}
		\end{proposition}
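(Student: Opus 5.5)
The plan is to reduce every item to a degree computation on a $\PP^1$ via Corollary~\ref{coro.Homspace}, and to use the rotational symmetry $\rho_k\mapsto\rho_{k+2}$ (indices modulo $6$) of the junior simplex in Figure~\ref{fig.js-13cycle} to treat only $k=2$ in each item. For each pair I will put $\cE=i_{1*}\cO_{D_1}(-C_1)$ and $\cF=i_{2*}\cO_{D_2}(-C_2)$, with $D_k'$ the surface cutting out the twisting curve, or $D_k'=0$ when the object is an untwisted structure sheaf. Then
\[
\dHomk(\cE,\cF)\cong\Gamma^\bullet_C\,\cO_C(D_1+D_1'-D_2')[-1],
\]
where $C=D_1\cap D_2\cong\PP^1$. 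So it suffices to show that $\deg \cO_C(D_1+D_1'-D_2')=0$, since then the answer is $\C[-1]$. I will compute the three contributions separately:
\begin{itemize}
\item $\deg\cO_C(D_1)$ equals the self-intersection $C^2$ in $D_2$, by Lemma~\ref{lemma.number}, read off from Proposition~\ref{prop.inter.139};
\item $\deg\cO_C(D_1')$ and $\deg\cO_C(D_2')$ are intersection numbers of torus-invariant curves inside $D_2$. I will read these off the fan of $S_4\cong\F_2(1)$ in Figure~\ref{fig.H2(1)}, where $C_{4,l}=V(v_l)$, using that two distinct invariant curves meet in one point exactly when their rays are adjacent, and are disjoint otherwise.
\end{itemize}

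\emph{First item} ($k=2$): take $D_1=S_3$, $D_1'=0$, $D_2=S_4$, $D_2'=S_3$. The degree is $\deg\cO_C(S_3-S_3)=0$ immediately, with no intersection numbers needed.

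\emph{Second item} ($k=2$): take $D_1=S_6$, $D_1'=S_5$, $D_2=S_4$, $D_2'=S_3$ and $C=C_{4,6}=V(v_6)$. The contributions are:
\begin{itemize}
\item $C_{4,6}^2=-1$ in $S_4$;
\item $\deg\cO_C(S_5)=V(v_6)\cdot V(v_5)$;
\item $\deg\cO_C(S_3)=V(v_6)\cdot V(v_3)$.
\end{itemize}
Ordering the rays $v_6=(1,0)$, $v_2=(1,1)$, $v_3=(0,1)$, $v_8=(-1,-2)$, $v_5=(0,-1)$ by angle, the neighbours of $v_6$ are $v_2$ and $v_5$. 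This gives $-1+1-0=0$.

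\emph{Third item} ($k=2$): take $D_1=S_4$, $D_1'=S_3$, $D_2=S_5$, $D_2'=0$ and $C=C_{4,5}$. The contributions are:
\begin{itemize}
\item $C_{4,5}^2=0$ in $S_5$;
\item $\deg\cO_C(S_3)=V(v_5)\cdot V(v_3)$, computed in $S_4$.
\end{itemize}
The rays $v_5$ and $v_3$ are not adjacent, so $S_3\cap C_{4,5}=\varnothing$ and the total degree is $0$.

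The main obstacle is the bookkeeping. I must check that the ray labels in Figure~\ref{fig.H2(1)} really match the curves $C_{4,l}$ under the lattice isomorphism in Proposition~\ref{prop.excsurfin139}, and that the fan's cyclic adjacency (equivalently, which triple intersection points exist) is as I read it. I must also confirm the hypotheses of Corollary~\ref{coro.Homspace}: $C$ is Cartier in both surfaces, and the restriction $\cO_X(-D_k')|_{D_k}$ really equals $\cO_{D_k}(-C_k)$. Note this equality needs $D_k'\cap D_k$ to be exactly the single curve $C_k$, which holds here because the surfaces meet transversally along torus-invariant curves.

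As a consistency check I will verify that the adjacencies match Figure~\ref{fig.rel139}, and that Serre duality gives the reversed $\dHomk$ as $\C[-2]$.
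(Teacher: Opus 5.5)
Your proposal is correct and is essentially the paper's proof. You reduce by the rotational symmetry $\rho_k\mapsto\rho_{k+2}$ to one case per bullet, then use Corollary~\ref{coro.Homspace} (equivalently Corollary~\ref{coro.Hom} for the first bullet), Lemma~\ref{lemma.number} and the intersection data to show the relevant line bundle on $C\cong\PP^1$ has degree $0$; the only difference is that the paper treats $\dHomk(\cE_1,\cE_2)$, $\dHomk(\cE_2,\cE_6)$, $\dHomk(\cE_6,\cE_1)$ and asserts the incidences, while your choice $k=2$ puts every curve in $S_4$, so the incidences ($C_{46}$ meets $C_{45}$ but not $C_{34}$, and $C_{45}$ misses $C_{34}$) can be read directly off the explicit fan of $\F_2(1)$, and they check out.
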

	\begin{proof}
		By rotational symmetry, we need to show 
		\begin{itemize}
			\item[(1)] $\dHomk(\cE_1,\cE_{2})=\C[-1]$;
			\item[(2)] $\dHomk(\cE_{2},\cE_6)=\C[-1]$;
			\item[(3)] $\dHomk(\cE_6,\cE_1)=\C[-1]$;
		\end{itemize}
		Since $\cE_2\cong i_{2*}\cO_{S_2}\otimes\cO_{X(1,3,9)}(-S_1)$, the statement (1) follows from Corollary~\ref{coro.Hom}.
			
		Using Corollary~\ref{coro.Homspace}, we have 
		\[\dHomk(\cE_{2},\cE_6)\cong \Gamma_{C_{26}}^\bullet\cO_{C_{26}}(S_2+S_1-S_5)[-1]
		\cong \Gamma_{C_{26}}^\bullet\cO_{C_{26}}[-1]=\C[-1],\]
		where the last isomorphism uses $S_5\cap C_{26}=\varnothing$, $S_1\cap C_{26}=\{\text{a point}\}$, Lemma~\ref{lemma.number}, and Proposition~\ref{prop.inter.139}. 
			
		Again by Corollary~\ref{coro.Homspace}, we have
		\begin{align*}
			\dHomk(\cE_6,\cE_1)\cong \Gamma_{C_{16}}^\bullet\cO_{C_{16}}(S_6+S_5)[-1]\cong \Gamma_{C_{16}}^\bullet\cO_{C_{16}}[-1]=\C[-1].
		\end{align*}
		where the last isomorphism uses $ S_5\cap C_{16}=\varnothing$, Lemma~\ref{lemma.number}, and Proposition~\ref{prop.inter.139}.
		\end{proof}
		
		\begin{proposition}\label{prop.139xzyperp}
			$T_{k}\cE_{k+1}\in \cE_{k-1}^\perp$ for $k=1,3,5$ with indices taken modulo $6$.
		\end{proposition}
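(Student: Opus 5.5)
By the rotational symmetry of $X(1,3,9)$ (the order-three symmetry $\rho_k\mapsto\rho_{k+2}$ of the junior simplex, which permutes the $S_k$ and the $\cE_k$ compatibly), it suffices to treat $k=1$, i.e.\ to show $T_1\cE_2\in\cE_6^\perp$. This corresponds to the $3$-cycle $1\xrightarrow{a_1}2\xrightarrow{b_1}6\xrightarrow{c_1}1$ in $W$. The plan is to apply Lemma~\ref{lemma.extcon}, then Lemma~\ref{lemma.homexten}, and finally Lemma~\ref{lemma.perpgen}, with
\[
D_1=S_1,\quad D_2=S_2,\quad D_3=S_6,\qquad D_1'=0,\quad D_2'=S_1,\quad D_3'=S_5,
\]
so that $\cE_1=i_{1*}\cO_{S_1}$, $\cE_2=i_{2*}\cO_{S_2}(-C_{12})$ and $\cE_6=i_{6*}\cO_{S_6}(-C_{56})$.

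\textbf{Step 1.} Since $D_2'=S_1=D_1$ and $D_1'=0$ (so trivially $D_1'\cap S_2=\varnothing$), and $C_{12}\cong\PP^1$, Lemma~\ref{lemma.extcon} gives
\[
T_1\cE_2\cong i_{12*}\cO_{S_1\cup S_2},
\]
so $\cM=\cO_{S_1\cup S_2}$ is a line bundle on $D_{12}$. One must check that the proof of Lemma~\ref{lemma.extcon} (in particular Corollary~\ref{coro.Hom}) goes through with $D_1'=0$; this reduces to statement~(1) of Proposition~\ref{prop.139A2}, namely $\dHomk(\cE_1,\cE_2)=\C[-1]$.

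\textbf{Step 2.} Verify the hypotheses of Lemma~\ref{lemma.homexten}. The curves $C_{16},C_{26}\cong\PP^1$ by Section~\ref{sec.exc}. Proposition~\ref{prop.139A2} gives $\dHomk(\cE_2,\cE_6)=\C[-1]$ and $\dHomk(\cE_6,\cE_1)=\C[-1]$. Serre duality then gives $\dHomk(\cE_6,\cE_2)=\C[-2]$. The lemma therefore yields
\[
\cK|_{C_{16}}\cong\cO_{C_{16}}(D_1')=\cO_{C_{16}},\qquad
\cK|_{C_{26}}\cong\cO_{C_{26}}(S_1)\otimes\cO_{C_{26}}(-2).
\]

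\textbf{Step 3.} Show $\deg\cO_{C_{26}}(S_1)=1$. The curve $C_{26}$ meets $S_1$ only in the triple point $p=S_1\cap S_2\cap S_6$ visible in Figure~\ref{fig.rel139}. In the smooth toric chart of the 3-dimensional cone $\langle\rho_1,\rho_2,\rho_6\rangle$, which is identified with the standard orthant, the three divisors are coordinate hyperplanes, so the intersection is transverse and of multiplicity one. This gives $\cK|_{C_{26}}\cong\cO_{C_{26}}(-1)$.

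\textbf{Step 4.} With $\dHomk(\cE_1,\cE_2)=\C[-1]$, $\cK|_{C_{16}}=\cO$ and $\cK|_{C_{26}}=\cO(-1)$, Lemma~\ref{lemma.perpgen} gives $T_1\cE_2\in\cE_6^\perp$.

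The main obstacle is Step 3, together with the bookkeeping of the degenerate choice $D_1'=0$ in Lemmas~\ref{lemma.extcon} and~\ref{lemma.homexten}. If the local transversality argument is unconvincing, I would instead compute $\deg\cO_{C_{26}}(S_1)$ by a linear equivalence. Write $S_1\sim-\sum_{k\neq1}a_kS_k$ using a character $m\in M$ with $\langle m,\rho_1\rangle=1$. Then evaluate the restriction to $C_{26}$ using Lemma~\ref{lemma.number}, the self-intersection numbers of Proposition~\ref{prop.inter.139}, and the fact that $C_{26}$ is disjoint from the surfaces other than $S_1,S_2,S_6$ that could appear.
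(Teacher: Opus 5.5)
Your proposal is correct and is essentially the paper's proof. The paper treats the rotated case $T_3\cE_4\in\cE_2^\perp$ by applying Lemma~\ref{lemma.extcon} to get $T_3\cE_4\cong i_{34*}\cO_{S_{34}}$, then Lemma~\ref{lemma.homexten}(1)--(3) to get $\cK|_{C_{23}}\cong\cO$ and $\cK|_{C_{24}}\cong\cO_{C_{24}}(S_3-2)\cong\cO_{C_{24}}(-1)$ (using that $S_3$ meets $C_{24}$ in one point), and finally Lemma~\ref{lemma.perpgen}; your Steps 1--4 are exactly this transported by $\rho_k\mapsto\rho_{k+2}$, and your toric-chart transversality argument in Step 3 simply makes explicit what the paper asserts, so the fallback linear-equivalence computation is not needed.
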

		\begin{proof}
			Again by symmetry, we only need to show $T_{3}\cE_{4}\in \cE_{2}^\perp$.
			
			Denote $S_{34}\coloneqq S_3\cup S_4$ with $i_{34}\colon S_{34}\to X(1,3,9)$. Write  $C_{2}\coloneqq C_{23}\cup C_{24}$, with $j_{2}\colon C_{2}\to S_{34}$.

			Using Proposition~\ref{prop.inter.139} and Lemma~\ref{lemma.extcon}, we obtain
			\[T_{3}\cE_{4}\cong {i_{34*}}\cO_{S_{34}}.\]
			
			By Proposition~\ref{prop.139A2} and Lemma~\ref{lemma.homexten}(1),
			\[\mathcal{H}om_{X(1,3,9)}(\mathcal{F}_{2}, T_{3}\cE_{4})
			\cong  i_{{34}*}j_{2*}\cK[-1].\]
			where $\cK=\mathcal{O}_{C_{2}}(S_2+S_1)\otimes j_{2}^*\cM$ and $\cM=\cO_{S_{34}}$. 
			
			Using Lemma~\ref{lemma.homexten}(2), we have \(\cK|_{C_{23}}\cong \mathcal{O}_{C_{23}}\).
			
			Using Lemma~\ref{lemma.homexten}(3), we have 
			$\cK|_{C_{24}}
			\cong \cO_{C_{24}}(S_3-2)
			\cong \cO_{C_{24}}(-1)$,
			where the last isomorphism uses $S_3\cap C_{24}=\{\text{a point}\}$. 
			
			Thus, the claim follows from Lemma~\ref{lemma.perpgen}.
		\end{proof} 
		
		Denote $S_{24}\coloneqq S_2\cup S_4$ and the closed embedding by $i_{24}\colon S_{24}\to X(1,3,9)$. Denote $C_6=C_{46}\cup C_{26}$ and the closed embedding by $j_6\colon  C_6\to  S_{24}$. Denote $F_4\coloneqq \{x=0\}\cap S_4$.
		
		\begin{lemma}\label{lemma.divisorsim}
			In $S_4$, $C_{24}$ is linearly equivalent to $2F_{4}+C_{45}-C_{46}$ as divisors.
		\end{lemma}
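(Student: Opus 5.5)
The plan is to prove the linear equivalence torically on $S_4\cong\F_2(1)$, using the identification of $\Sigma_4$ with $\Sigma_{\F_2(1)}\times K_{\F_2(1)}$ from the proof of Proposition~\ref{prop.excsurfin139}. Under this identification, the torus-invariant prime divisors of $S_4$ are the curves $V(v_k)\cong C_{4,k}$ for $k=2,3,5,6$, together with $V(v_8)=S_4\cap V(\rho_8)$. Since $\rho_8=(1,0,0)$ is the ray of $\Sigma_{\C^3/G}$ whose divisor is the strict transform of the coordinate hyperplane $\{x=0\}$, the first step is to check that $F_4=\{x=0\}\cap S_4$ is exactly the invariant curve $V(v_8)$. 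I will also check, via Proposition~\ref{prop.timesD}, that this intersection is reduced.

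Next I will use the standard relation on a smooth complete toric surface: for every $m\in M$,
\[
\sum_{k}\langle m,v_k\rangle V(v_k)\sim 0 .
\]
Taking $m=(1,0)$ and $m=(0,1)$ in the coordinates of Figure~\ref{fig.H2(1)} gives two relations among $C_{24},C_{34},C_{45},C_{46},F_4$. I will eliminate $C_{34}$, which does not appear in the claim, by a suitable integer combination of the two characters. The resulting relation should read $C_{24}-2F_4-C_{45}+C_{46}\sim 0$.

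As a safer cross-check, and to avoid trusting the two-dimensional coordinates blindly, I will redo the computation on the threefold. Choose $m$ in the dual lattice $L^\vee$ and take the principal divisor
\[
\operatorname{div}(\chi^m)=\sum_{\rho}\langle m,u_\rho\rangle V(\rho)
\]
on $X(1,3,9)$. I then restrict it to $S_4$. Rays not adjacent to $\rho_4$ restrict to $0$; $V(\rho_k)$ restricts to $C_{4,k}$ for $k=2,3,5,6$; $V(\rho_8)$ restricts to $F_4$; and $V(\rho_4)|_{S_4}$ is the normal bundle, which is $K_{S_4}$ by the Calabi--Yau condition. Choosing $m$ with $\langle m,\rho_4\rangle=0$ removes the $K_{S_4}$ term. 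Choosing $\langle m,\rho_3\rangle=0$ as well removes $C_{34}$. The remaining pairings, computed from the coordinates in Figure~\ref{fig.js-subfan} (for example with $m$ a suitable integral vector pairing with $\rho_k$ to give the numerators of $\rho_k$ divided by $13$), should yield the coefficients $1,-2,-1,1$ on $C_{24},F_4,C_{45},C_{46}$.

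The main obstacle is bookkeeping of orientations and signs. The matching of rays in Figure~\ref{fig.H2(1)} to the $\rho_k$, and the sign convention in \eqref{eq.timesD}, can easily flip a coefficient. A careless first attempt with $m=(1,0)$ alone gives $C_{24}\sim F_4-C_{46}$, which differs from the claim by the relation $F_4+C_{45}\sim 0$. This shows that the elimination of $C_{34}$ and the correct assignment of $F_4$ must be handled carefully. I would therefore do the threefold computation with explicit vectors in $L^\vee$ first. Finally, I will confirm the answer numerically by intersecting both sides with $C_{24}$, $C_{46}$ and $C_{45}$, using the self-intersection numbers of Proposition~\ref{prop.inter.139} and the intersection pattern of Figure~\ref{fig.rel139}. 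Since $\operatorname{Pic}(S_4)$ is free and detected by intersection numbers with the invariant curves, matching all these intersection numbers also proves the equivalence.
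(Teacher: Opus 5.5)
There is a genuine gap: the statement as printed is false, so bookkeeping cannot close it. The step that fails is your prediction that eliminating $C_{34}$ yields $C_{24}-2F_4-C_{45}+C_{46}\sim 0$.

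In the coordinates of Figure~\ref{fig.H2(1)}, a character $m=(a,b)$ gives $\operatorname{div}(\chi^m)=aC_{46}+(a+b)C_{24}+bC_{34}-(a+2b)F_4-bC_{45}$. On a smooth complete toric surface these are the only principal torus-invariant divisors. Killing the $C_{34}$ coefficient forces $b=0$. That also kills $C_{45}$ and leaves $a(C_{24}+C_{46}-F_4)\sim 0$. Your threefold version gives the same result: $m\perp\rho_3,\rho_4$ forces $m\in\Z\cdot(-1,0,3)\subset L^\vee$, with pairings $1,1,0,-1$ against $\rho_2,\rho_6,\rho_5,\rho_8$.

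So your ``careless first attempt'' $C_{24}\sim F_4-C_{46}$ is the correct outcome. The discrepancy $F_4+C_{45}\sim 0$ you noticed is impossible, because $F_4+C_{45}$ is a nonzero effective divisor on the projective surface $S_4$. Your own intersection check exposes this as well: $C_{24}\cdot C_{46}=1$, whereas $(2F_4+C_{45}-C_{46})\cdot C_{46}=0+1+1=2$. The lemma, read literally, cannot be proved.

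The $C_{46}$ in the statement is a misprint for $C_{34}$. The paper's proof uses the character that pairs to zero with $v_6$, namely $V(v_3)+V(v_2)-2V(v_8)-V(v_5)\sim 0$. This gives $C_{24}\sim 2F_4+C_{45}-C_{34}$, which is exactly the form invoked in Lemma~\ref{lemma.F2F1}.

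Your method proves this corrected statement once you eliminate $C_{46}$ rather than $C_{34}$. On the threefold take $m=(2,-2,-1)$. It lies in $L^\vee$ since $2-6-9\equiv 0 \bmod 13$, and it is orthogonal to $\rho_4$ and $\rho_6$. Its pairings with $\rho_2,\rho_3,\rho_5,\rho_8$ are $-1,-1,1,2$, giving $2F_4+C_{45}-C_{24}-C_{34}\sim 0$. The rest of your setup is sound: identifying $F_4$ with $V(v_8)$, restricting $\operatorname{div}(\chi^m)$ to $S_4$, and removing the $K_{S_4}$ term via $\langle m,\rho_4\rangle=0$. With the corrected target, the intersection-number cross-check also succeeds.
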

		\begin{proof}
			By \cite[Proposition~4.1.2]{CLS}, in $\F_2(1)$ as shown in Figure~\ref{fig.H2(1)}, we have
			\begin{equation*}
				0\sim \mathrm{div}(\chi^{e_1})=V(v_3)+V(v_2)-2V(v_8)-V(v_5).
			\end{equation*}
			
			Using the definition of $\Sigma_{\F_2(1)}\times K_{\F_2(1)}$, we have
			\[0\sim \mathrm{div}(\chi^{e_1})=V(\tilde v_3)+V(\tilde v_2)-2V(\tilde v_8)-V(\tilde v_5).\]
			
			Using the toric isomorphism induced by the lattice isomorphism in Proposition~\ref{prop.excsurfin139}, we have
			\begin{equation*}
				0\sim V(\langle\rho_3,\rho_4\rangle)+V(\langle\rho_2,\rho_4\rangle)-2V(\langle\rho_4,\rho_8\rangle)-V(\langle\rho_4,\rho_5\rangle).
			\end{equation*}
			The claim then follows from the identifications $V(\langle\rho_k,\rho_l\rangle)=C_{k,l}$ and $V(\rho_8)=\{x=0\}$. 
		\end{proof}
		
		\begin{lemma}\label{lemma.F2F1}
			$T_{4}\cE_{2}$ is isomorphic to $i_{24*}\cO_{ S_{24}}(C_{24}-C_{12}-2F_4-C_{45})$.
		\end{lemma}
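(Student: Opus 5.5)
The plan is to identify $T_4\cE_2$ through its defining triangle, following the proof of Lemma~\ref{lemma.extcon}. The difference is that neither twisting divisor here is disjoint from the other surface, so the gluing line bundle on $S_{24}$ has to be found by hand.

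By Proposition~\ref{prop.139A2} we have $\dHomk(\cE_4,\cE_2)=\C[-1]$. By \cite[Lemma~2.3(2)]{DZ}, $T_4\cE_2$ is then the unique non-split extension
\[
\cE_2\longrightarrow T_4\cE_2\longrightarrow \cE_4 ,\qquad \cE_2=i_{2*}\cO_{S_2}(-C_{12}),\quad \cE_4=i_{4*}\cO_{S_4}(-C_{34}).
\]
So it suffices to produce a line bundle $\cL$ on $S_{24}$ together with a non-split triangle $i_{2*}\cO_{S_2}(-C_{12})\to i_{24*}\cL\to i_{4*}\cO_{S_4}(-C_{34})$, and then to show that $\cL\cong\cO_{S_{24}}(C_{24}-C_{12}-2F_4-C_{45})$.

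\emph{Step 1: the basic sequence.} Since $S_2,S_4$ are smooth and meet transversally along $C_{24}\cong\PP^1$, we have the sequence
\[
0\to\cO_{S_2}(-C_{24})\to\cO_{S_{24}}\to\cO_{S_4}\to0,
\]
exactly as in \eqref{ext.con1}. It is non-split because $C_{24}\neq\varnothing$. Tensoring with any line bundle $\cL$ on $S_{24}$ gives
\[
0\to\cL|_{S_2}(-C_{24})\to\cL\to\cL|_{S_4}\to0 .
\]
Tensoring with a line bundle preserves non-splitness, so we only need $\cL$ to satisfy
\[
\cL|_{S_4}\cong\cO_{S_4}(-C_{34}),\qquad \cL|_{S_2}\cong\cO_{S_2}(C_{24}-C_{12}).
\]

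\emph{Step 2: a meaning for the formula.} To make sense of $\cO_{S_{24}}(C_{24}-C_{12}-2F_4-C_{45})$ on the non-normal surface $S_{24}$, I will read every term as the restriction of a Cartier divisor on $X(1,3,9)$. Here $C_{12}$ comes from $S_1$, $C_{45}$ from $S_5$, and $F_4$ from $\{x=0\}=V(\rho_8)$. The term $C_{24}$ is the delicate one, since it is the double curve. On $S_2$ I will take $\cO_{S_2}(C_{24})$ to be $\cO_X(S_4)|_{S_2}$. On $S_4$ I will take $\cO_{S_4}(C_{24})$ to be the class given by Lemma~\ref{lemma.divisorsim}, namely $2F_4+C_{45}-C_{46}$, which is itself cut out by ambient divisors.

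\emph{Step 3: checking the restrictions.} On $S_2$ the curves $F_4$ and $C_{45}$ do not appear, because $V(\rho_8)$ and $S_5$ miss $S_2$, as the junior simplex in Figure~\ref{fig.js-13cycle} shows. So the restriction is $\cO_{S_2}(C_{24}-C_{12})$, as required. On $S_4$ the curve $C_{12}$ is absent, and Lemma~\ref{lemma.divisorsim} turns $C_{24}-2F_4-C_{45}$ into $-C_{46}$. It then remains to show $\cO_{S_4}(-C_{46})\cong\cO_{S_4}(-C_{34})$, up to correcting the chosen ambient representative. For this I will use the second toric relation $\mathrm{div}(\chi^{e_2})$ in the fan of Figure~\ref{fig.H2(1)}, transported through the lattice isomorphism of Proposition~\ref{prop.excsurfin139}. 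If the two classes do not agree on the nose, I will change the ambient representative of the $C_{24}$ term by a principal divisor, using $\mathrm{div}(\chi^{m})$ on $X$ restricted to both components. This keeps the $S_2$-restriction unchanged while fixing the $S_4$-restriction.

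\emph{Step 4: the main obstacle.} The hardest part is the gluing. One must check that the two restrictions actually come from a single line bundle on $S_{24}$, i.e.\ that they agree on $C_{24}$. Via Lemma~\ref{lemma.number} and Proposition~\ref{prop.inter.139}, both restrictions to $C_{24}$ should have degree consistent with the self-intersection numbers $-1$ and $-1$. Once the gluing holds, the uniqueness of the non-split extension (since $\dim\Hom^1=1$) gives the stated isomorphism.
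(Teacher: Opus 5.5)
Your overall route is the paper's. You identify $T_4\cE_2$ as the unique non-split extension of $\cE_4$ by $\cE_2$ via \cite[Lemma~2.3(2)]{DZ}, and realise it as $i_{24*}$ of a line bundle on $S_{24}$ with restrictions $\cO_{S_2}(C_{24}-C_{12})$ and $\cO_{S_4}(-C_{34})$. The paper does the same by tensoring the structure sequence of $S_{24}$ with $\cO_{S_{24}}(C_{24}-C_{12})$ and then with the ambient $\cO(-2F_4-C_{45})$.

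The gap is in Steps~2--3. You take $\cO_{S_4}(C_{24})$ to be the class $2F_4+C_{45}-C_{46}$ from the statement of Lemma~\ref{lemma.divisorsim}, and so you need $\cO_{S_4}(-C_{46})\cong\cO_{S_4}(-C_{34})$. This is false: in $S_4\cong\F_2(1)$ one has $C_{46}^2=-1$ and $C_{34}^2=1$. Your fallback cannot repair it. Since $\cO_X(\mathrm{div}\,\chi^m)\cong\cO_X$ restricts to the trivial bundle on $S_4$, changing an ambient representative by a principal divisor never changes a class on $S_4$. There is also nothing to choose here: $\cO_{S_4}(C_{24})$ is $\cO_X(S_2)|_{S_4}$, the class of the actual curve $C_{24}$.

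What you are missing is that the statement of Lemma~\ref{lemma.divisorsim} has a misprint.
\begin{itemize}
\item The relation computed in its proof is $C_{34}+C_{24}-2F_4-C_{45}\sim 0$. For the ray labels in Figure~\ref{fig.H2(1)} this is $\mathrm{div}(\chi^{e_2})$, and it gives $C_{24}\sim 2F_4+C_{45}-C_{34}$.
\item The printed version with $C_{46}$ fails already against $C_{46}$: $C_{24}\cdot C_{46}=1$, whereas $(2F_4+C_{45}-C_{46})\cdot C_{46}=2$.
\item The corrected relation is exactly what the paper's proof uses (``$\cO_{S_4}(C_{24})\cong\cO_{S_4}(-C_{34}+2F_4+C_{45})$''). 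With it, $\cO_{S_4}(C_{24}-2F_4-C_{45})\cong\cO_{S_4}(-C_{34})$ immediately.
\end{itemize}

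With this fix your Step~4 goes through, but you should actually check the degrees on $C_{24}$.
\begin{itemize}
\item In $S_2$ the curves $C_{12}$ and $C_{24}$ are disjoint, since there is no cone $\langle\rho_1,\rho_2,\rho_4\rangle$. So $\cO_{S_2}(C_{24}-C_{12})|_{C_{24}}$ has degree $C_{24}^2=-1$.
\item In $S_4$, $\cO_{S_4}(-C_{34})|_{C_{24}}$ has degree $-C_{34}\cdot C_{24}=-1$.
\end{itemize}
Since $H^0(C_{24},\cO_{C_{24}}^*)=\C^*$ is hit by constants, these two restrictions glue to a unique line bundle on $S_{24}$. Uniqueness of the non-split extension then finishes the proof.
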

		\begin{proof}
			In fact, there is an exact sequence
			\begin{equation}\label{ext.s24}
				0\longrightarrow\mathcal{I}_{ S_2|C_{24}}\longrightarrow\cO_{ S_{24}}\longrightarrow\cO_{ S_4}\longrightarrow0,
			\end{equation}
			on $ S_{24}$, where the extension is non-split since $ S_2\cap  S_4$ is non-empty.
			
			Tensoring \eqref{ext.s24} with $\cO_{ S_{24}}(C_{24}-C_{12})$, we get the short exact sequence
			\begin{equation}\label{ext.s24c24}
				0\longrightarrow\cO_{ S_2}(-C_{12})\longrightarrow\cO_{ S_{24}}(C_{24}-C_{12})\longrightarrow\cO_{ S_4}(C_{24})\longrightarrow0,
			\end{equation}
			where the right-hand term follows from $C_{12}\cap S_4=\emptyset$.
			
			By Lemma~\ref{lemma.divisorsim}, $\cO_{ S_4}(C_{24})\cong\cO_{ S_4}(-C_{34}+2F_4+C_{45})$ on $ S_4$. Then the right part of \eqref{ext.s24c24}  can be replaced.
			\begin{equation}\label{ext.s24c34}
				0\longrightarrow\cO_{ S_2}(-C_{12})\longrightarrow\cO_{ S_{24}}(C_{24}-C_{12})\longrightarrow\cO_{ S_4}(-C_{34}+2F_4+C_{45})\longrightarrow0.
			\end{equation}
			Applying $\cO_{ S_{24}}(-2F_4-C_{45})$ to \eqref{ext.s24c34}, we get the short exact sequence
			\begin{equation}\label{ext.F1F2(1)}
				0\longrightarrow\cO_{ S_2}(-C_{12})\longrightarrow\cO_{ S_{24}}(C_{24}-C_{12}-2F_4-C_{45})\longrightarrow\cO_{ S_4}(-C_{34})\longrightarrow0,
			\end{equation}
			where the left-hand term follows from $C_{45}\cap S_2=2F_4\cap S_2=\emptyset$. After applying $i_{{24}*}$ on \eqref{ext.F1F2(1)}, we  get the exact triangle in $D(X(1,3,9))$
			\begin{equation*}
				\cE_{2}\longrightarrow i_{24*}\cO_{ S_{24}}(C_{24}-C_{12}-2F_4-C_{45})\longrightarrow \cE_{4}.
			\end{equation*}
			Comparing with~\cite[Theorem~2.3(2)]{DZ}, the claim is proved. 
		\end{proof}

		\begin{proposition}\label{prop.139xyzperp}
			$T_4\cE_2$ lies in the right orthogonal of $\cE_6$, i.e.,~
			$T_{4}\cE_{2}\in\cE_6^\perp$.
		\end{proposition}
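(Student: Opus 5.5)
We follow the pattern of the proof of Proposition~\ref{prop.139xzyperp}. Write $\cM\coloneqq\cO_{S_{24}}(C_{24}-C_{12}-2F_4-C_{45})$, so that $T_4\cE_2\cong i_{24*}\cM$ by Lemma~\ref{lemma.F2F1}. The third surface $S_6$ meets $S_2$ and $S_4$ along $C_{26}$ and $C_{46}$, and these meet at the triple point $S_2\cap S_4\cap S_6$. Set $C_6=C_{26}\cup C_{46}$ with $j_6\colon C_6\to S_{24}$.

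Since $\cE_6=i_{6*}\cO_{S_6}\otimes\cO_X(-S_5)$, Corollary~\ref{coro.Homspace} (in the form of Lemma~\ref{lemma.homexten}(1)) gives
\[\mathcal{H}om_X(\cE_6,T_4\cE_2)\cong i_{24*}j_{6*}\cK[-1],\qquad \cK=\cO_{C_6}(S_6+S_5)\otimes j_6^*\cM.\]
The hypotheses of Lemma~\ref{lemma.homexten} are supplied by Proposition~\ref{prop.139A2} together with Serre duality:
\begin{itemize}
\item $\dHomk(\cE_6,\cE_4)=\C[-2]$, dual to $\dHomk(\cE_4,\cE_6)=\C[-1]$;
\item $\dHomk(\cE_6,\cE_2)=\C[-1]$.
\end{itemize}
Note that the roles of the two surfaces are swapped compared to Lemma~\ref{lemma.homexten}: here $\cE_4$ is the twisting object and $\cE_2$ the twisted one. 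I will therefore recompute the restrictions directly rather than quote (2) and (3) verbatim.

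\textbf{Step 1: the restriction to $C_{26}$.} From $\dHomk(\cE_6,\cE_2)=\C[-1]$ and Corollary~\ref{coro.Homspace}, we get $\cO_{C_{26}}(S_6+S_5)\cong\cO_{C_{26}}(S_1)$. Hence
\[\cK|_{C_{26}}\cong\cO_{C_{26}}(S_1+C_{24}-C_{12}-2F_4-C_{45}).\]
Each term is a degree on $C_{26}$, read off from Figure~\ref{fig.rel139} and Proposition~\ref{prop.inter.139}:
\begin{itemize}
\item $S_1\cdot C_{26}=1$, since $C_{26}$ meets $C_{12}$ at the triple point;
\item $C_{12}\cdot C_{26}=1$ inside $S_2$;
\item $C_{24}\cdot C_{26}=1$, computed in $S_2$;
\item $F_4$ and $C_{45}$ do not meet $S_2$, as already used in Lemma~\ref{lemma.F2F1}.
\end{itemize}
This yields degree $1+1-1=1$, or possibly $0$ after a careful count. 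I will check whether the intended value is $\cO$ or $\cO(-1)$.

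\textbf{Step 2: the restriction to $C_{46}$.} From $\dHomk(\cE_6,\cE_4)=\C[-2]$ we get $\cO_{C_{46}}(S_6+S_5-S_3)\cong\cO(-2)$. Hence
\[\cK|_{C_{46}}\cong\cO_{C_{46}}(S_3-2+C_{24}-2F_4-C_{45}).\]
The degrees are computed in $S_4\cong\F_2(1)$ using the toric intersection numbers of Section~\ref{sec.toric}, via the identification of Figure~\ref{fig.H2(1)}:
\[S_3\cdot C_{46}=0,\quad C_{24}\cdot C_{46}=1\ (v_2,v_6\text{ adjacent}),\quad C_{45}\cdot C_{46}=0,\quad F_4\cdot C_{46}=V(v_8)\cdot V(v_6)=0.\]
Alternatively, Lemma~\ref{lemma.divisorsim} can be used to rewrite $C_{24}-2F_4-C_{45}\sim -C_{34}$ on $S_4$, which gives degree $-2+0=-2$ up to the Step~1 correction.

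\textbf{Main obstacle.} The hard part is getting the two degrees to be exactly $0$ on one component and $-1$ on the other. Lemma~\ref{lemma.perpgen} requires $\cK|$ trivial on the curve lying in the surface with $\dHomk=\C[-1]$ and $\cO(-1)$ on the other curve. Its proof also needs $\hom^1=\hom^2$, which comes from the triangle $\cE_2\to T_4\cE_2\to\cE_4$ and the Euler characteristic.

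The sign bookkeeping is delicate because $S_4$ is not an $\F_e$. If the degrees come out as $(0,-1)$, I will run the gluing argument of Lemma~\ref{lemma.perpgen} at the triple point: a section must be constant on $C_{26}$ and vanish on $C_{46}$, which forces it to be zero. If they come out as $(1,-2)$, I will instead apply the same argument directly: $h^0$ on the chain is at most the sections on $C_{26}$ vanishing at the node, and this must be compared with the Euler characteristic.

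If this fails, the fallback is Lemma~\ref{lemma.change}. It suffices to prove the orthogonality for a different rotation of the cycle $2\to6\to4\to2$, for example $T_2\cE_6\in\cE_4^\perp$. For that rotation Lemma~\ref{lemma.extcon} may apply directly, using $C_{26}^2=-1$ on both sides and adapting the twisting divisor.
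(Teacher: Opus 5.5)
Your strategy is the paper's: compute $\mathcal{H}om_X(\cE_6,T_4\cE_2)$ via Lemma~\ref{lemma.homexten}(1) with $\cM$ from Lemma~\ref{lemma.F2F1}, find the degrees of $\cK$ on $C_{26}$ and $C_{46}$, and finish with Lemma~\ref{lemma.perpgen}. However, you misread the inputs, and the computation never closes. Proposition~\ref{prop.139A2} gives $\dHomk(\cE_6,\cE_4)=\C[-1]$ (the case $k=2$). It also gives $\dHomk(\cE_2,\cE_6)=\C[-1]$ (the case $k=3$, since $\cE_8=\cE_2$; this is item (2) of its proof), hence $\dHomk(\cE_6,\cE_2)=\C[-2]$ by Serre duality. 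That is the opposite of what you wrote. Corollary~\ref{coro.Homspace} confirms it directly: $\deg\cO_{C_{46}}(S_6+S_5-S_3)=-1+1-0=0$ and $\deg\cO_{C_{26}}(S_6+S_5-S_1)=-1+0-1=-2$. Your version is not a harmless relabelling. Suppose $\dHomk(\cE_6,\cE_2)=\C[-1]$ and $\dHomk(\cE_6,\cE_4)=\C[-2]$. Applying $\dHomk(\cE_6,-)$ to $\cE_2\to T_4\cE_2\to\cE_4$ then gives an injection $\Hom^1(\cE_6,\cE_2)\hookrightarrow\Hom^1(\cE_6,T_4\cE_2)$, because $\Hom^0(\cE_6,\cE_4)=0$. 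So the proposition would be false. With the correct directions there is no role swap at all. Lemma~\ref{lemma.homexten} applies verbatim, with the lemma's $\cE_1,\cE_2,\cE_3$ taken to be $\cE_4,\cE_2,\cE_6$, $D_1'=S_3$ and $D_2'=S_1$. The correction factors are therefore $\cO(S_3)$ on $C_{46}$ and $\cO(S_1-2)$ on $C_{26}$, not $\cO(S_3-2)$ and $\cO(S_1)$.

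There is a second error: you take $C_{45}\cdot C_{46}=0$ in $S_4$. In fact $v_5$ and $v_6$ span a cone of the fan of $\F_2(1)$ (equivalently, $S_4\cap S_5\cap S_6$ is a point), so $C_{45}\cdot C_{46}=1$. This is why your two computations on $C_{46}$ disagree. The correct count runs as follows. On $S_4$, $\cM|_{S_4}\cong\cO_{S_4}(-C_{34})\cong\cO_{S_4}(C_{24}-2F_4-C_{45})$ has degree $0$ on $C_{46}$, and $S_3\cap C_{46}=\varnothing$, so $\cK|_{C_{46}}\cong\cO$. On $S_2$, $\cM|_{S_2}\cong\cO_{S_2}(C_{24}-C_{12})$ has degree $1-1=0$ on $C_{26}$, so $\deg\cK|_{C_{26}}=0+1-2=-1$. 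Lemma~\ref{lemma.perpgen} then finishes, with $\cO$ on the curve in $S_4$, the surface for which $\dHomk(\cE_6,\cE_4)=\C[-1]$.

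Your contingencies would not rescue the argument. With the degrees you actually obtain ($1$ on $C_{26}$, and $-1$ or $-2$ on $C_{46}$), a global section of $\cK$ is any section of $\cO_{\PP^1}(1)$ on $C_{26}$ vanishing at the node, so $h^0(\cK)=1\neq 0$. The fallback to $T_2\cE_6\in\cE_4^\perp$ via Lemma~\ref{lemma.change} cannot use Lemma~\ref{lemma.extcon}. Its hypotheses fail: $S_1\cap S_6\neq\varnothing$, and $\cO_{S_6}(-C_{56})\not\cong\cO_{S_6}(-C_{26})$, since their self-intersections in $S_6$ are $1$ and $-1$. This failure is exactly why the paper needed Lemmas~\ref{lemma.divisorsim} and~\ref{lemma.F2F1}.
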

		\begin{proof}
			By Proposition~\ref{prop.139A2}, Lemma~\ref{lemma.homexten}(1) and~\ref{lemma.F2F1},
			\[\mathcal{H}om_{X(1,3,9)}(\cE_{6}, T_{4}\cE_{2})
			\cong  i_{{24}*}j_{6*}\cK[-1].\]
			where $\cK=\mathcal{O}_{C_{6}}(S_6+S_5)\otimes j_{6}^*\cM$ and $\cM=\cO_{ S_{24}}(C_{24}-C_{12}-2F_4-C_{45})$.
			
			Using Lemma~\ref{lemma.homexten}(2), we have \[\cK|_{C_{46}}\cong \mathcal{O}_{C_{46}}(C_{24}\cdot C_{46}-C_{24}\cdot C_{45}-2F_4\cdot C_{46})\cong\mathcal{O}_{C_{46}},\]
			 because $C_{24}\cdot C_{46}=1=C_{46}\cdot C_{45}$ and $F_4\cdot C_{46}=0$ in $S_4$ and $C_{46}\cap S_3=\varnothing$, and $C_{12}\cap C_{46}=\varnothing$.

			Also using Lemma~\ref{lemma.homexten}(3), we have \[\cK|_{C_{26}}\cong \cO_{C_{26}}(C_{26}\cdot C_{12}-C_{26}\cdot C_{24})(S_1-2)
			\cong \cO_{C_{26}}(-1),\] since $C_{26}\cdot C_{12}=C_{26}\cdot C_{24}=1$ in $S_2$ and $C_{26}\cap S_1=\{\text{a point}\}$. 
			
			Thus, the claim is proved by Lemma~\ref{lemma.perpgen}.
		\end{proof}
		
		\begin{proof}[Proof of Theorem~\ref{thm.D6}]
			By Proposition~\ref{prop.139A2},~\ref{prop.139xyzperp} and~\ref{prop.139xzyperp}, $\{\cE_k\}_{1\leq k\leq 6}$ is a $(Q,W)$-configuration in $D(X(1,3,9))$, since
			$\dHomk(\cE_k,\cE_l)=0$ for $S_k\cap S_l=\varnothing$.
			Thus, the action via $\beta_k\mapsto T_k$ is given by Proposition~\ref{prop.cycle}.
			
			To prove faithfulness, by Proposition~\ref{prop.D6}, \[\{\cF_1\coloneqq\cE_5,\,\cF_2\coloneqq T_4T_5T_2(\cE_6),\,\cF_3\coloneqq\cE_4,\,\cF_4\coloneqq\cE_3,\,\cF_5\coloneqq T_2(\cE_3),\, \cF_6\coloneqq \cE_1\}\] is a $D_6$-configuration in $D(X(1,3,9))$ as shown in Figure~\ref{fig.D6con}. Using \cite[Theorem~1]{NV}, the $D_6$-configuration induces a faithful $\Br(D_6)$-action via $T'_k\coloneqq T_{\cF_k}$. Using Lemma~\ref{lemma.equivalence}(2), we have 
			\[T'_1\cong T_5,\quad
			T'_2\cong (T_4T_5T_2)T_6(T_4T_5T_2)^{-1},\quad
			T'_3\cong T_4,\quad
			T'_4\cong T_3,\quad
			T'_5\cong T_2T_3T_2^{-1},\quad
			T'_6\cong T_1.\]
			Conversely, we have
			\[T_1\cong T'_6,\quad
			T_2\cong T'_4T'_5{T'_4}^{-1},\quad
			T_3\cong T'_4,\quad
			T_4\cong T'_3,\quad
			T_5\cong T'_1,\quad
			T_6\cong (T'_3T'_4T'_5)^{-1}T'_2(T'_3T'_4T'_5).\]
			Thus, the action of $\AT(Q,W)$ is also faithful.
		\end{proof}
		
	\section{The faithful algebraic braid twist group action associated with exceptional surfaces in $X(1,3,13)$}\label{sec.1313}
	
		\subsection{Exceptional surfaces in $X(1,3,13)$}
		
		The junior simplex for $X(1,3,13)$ is shown in Figure~\ref{fig.js1313}.
		\begin{figure}[htbp]
			\begin{center}
				
				\tdplotsetmaincoords{-145}{90}
				
				\begin{tikzpicture}[tdplot_main_coords,scale=0.4]
					
					\tdplotsetrotatedcoords{-90}{45}{90}
					
					\begin{scope}[tdplot_rotated_coords]
						\coordinate (O) at (0,0,0);
						\coordinate (P1) at (17,0,0); 
						\coordinate (P2) at (0,17,0); 
						\coordinate (P3) at (0,0,17);
						\coordinate (Q1) at (13,1,3);
						\coordinate (Q2) at (9,2,6);
						\coordinate (Q3) at (5,3,9);
						\coordinate (Q4) at (1,4,12);
						\coordinate (Q5) at (10,6,1);
						\coordinate (Q6) at (6,7,4);
						\coordinate (Q7) at (2,8,7);
						\coordinate (Q8) at (3,12,2);
						
					\end{scope}
					
					\draw (P1) -- (P2) -- (P3) -- cycle; 
					
					\draw (P1) -- (Q4);
					\draw (P1) -- (Q8);
					
					\draw (P3) -- (Q1);
					\draw (P3) -- (Q2);
					\draw (P3) -- (Q3);
					\draw (P3) -- (Q8);
					
					\draw (P2) -- (Q4);
					\draw (P2) -- (Q5);
					\draw (P2) -- (Q7);
					\draw (P2) -- (Q2);
					
					\draw (Q1) -- (Q5);
					\draw (Q1) -- (Q6);
					
					\draw (Q3) -- (Q6);
					\draw (Q3) -- (Q7);
					
					\draw (Q5) -- (Q7);

					
					\node[circle,fill,inner sep=1pt,label={90:$\rho_9(0,0,1)$}] at (P1) {};
					\node[circle,fill,inner sep=1pt,label={0:$\rho_{10}(1,0,0)$}] at (P2) {};
					\node[circle,fill,inner sep=1pt,label={180:$\rho_{11}(0,1,0)$}] at (P3) {};
					\node[circle,fill,inner sep=1pt,label={-90:$\rho_2(\frac{3}{17},\frac{9}{17},\frac{5}{17})$}] at (Q3) {};
					\node[circle,fill,inner sep=1pt,label={-90:$\rho_1(\frac{2}{17},\frac{6}{17},\frac{9}{17})$}] at (Q2) {};
					\node[circle,fill,inner sep=1pt,label={180:$\rho_8(\frac{1}{17},\frac{3}{17},\frac{13}{17})$}] at (Q1) {};
					\node[circle,fill,inner sep=1pt,label={-90:$\rho_3(\frac{4}{17},\frac{12}{17},\frac{1}{17})$}] at (Q4) {};
					\node[circle,fill,inner sep=1pt,label={0:$\rho_7(\frac{6}{17},\frac{1}{17},\frac{10}{17})$}] at (Q5) {};
					\node[circle,fill,inner sep=1pt,label={-90:$\rho_6(\frac{7}{17},\frac{4}{17},\frac{6}{17})$}] at (Q6) {};
					\node[circle,fill,inner sep=1pt,label={-90:$\rho_4(\frac{8}{17},\frac{7}{17},\frac{2}{17})$}] at (Q7) {};
					\node[circle,fill,inner sep=1pt,label={-90:$\rho_5(\frac{12}{17},\frac{2}{17},\frac{3}{17})$}] at (Q8) {};
				\end{tikzpicture}
			\end{center}
			\caption{The junior simplex of $X(1,3,13)$}
			\label{fig.js1313}
		\end{figure}
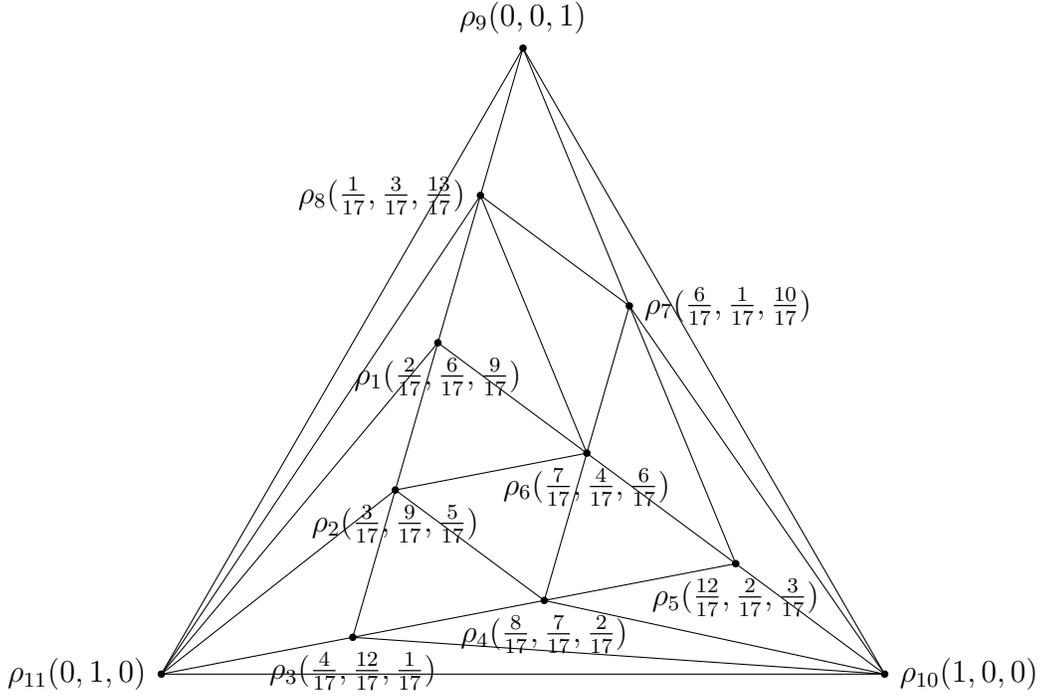
		
		Denote the irreducible compact exceptional surfaces by $S_k\coloneqq V(\rho_k)$ with the closed embedding $i_k\colon S_k\to X(1,3,13)$. Write $C_{k,l}\coloneqq S_k\cap S_l$ with the closed embeddings $j_{k,l}\colon C_{k,l}\to S_l$ and $j_{l,k}\colon C_{k,l}\to S_k$.
		
		Similar to the case of $X(1,3,9)$, using Proposition~\ref{prop.exceptional types}, Lemma~\ref{lemma.sum}, and~\cite[Corollary~1.6]{CR}, we have
		\begin{proposition}\label{prop.exceptionalsurfaces1313}
			All the irreducible compact exceptional surfaces in $X(1,3,13)$ and the self-intersection numbers of their intersection curves are shown in Figure~\ref{fig.rel1313}. In the figure, each hexagon represents an exceptional surface $S_k$, each edge represents a $\mathbb{P}^1$ intersection curve $C_{k,l}$, and the red number adjacent to an edge inside hexagon $S_k$ is $C_{k,l}^2$ in $S_k$. A vertex where three hexagons meet indicates a triple intersection point, as shown in Figure~\ref{fig.rel139}.
			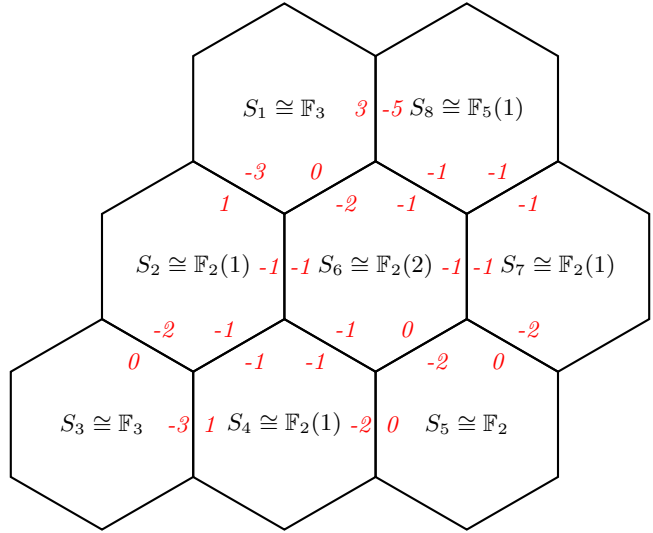
\begin{figure}[htbp]
				\centering
				\begin{tikzpicture}[scale=1.4,font=\scriptsize]
					\hexsetup
					\hexagon{-\hs}{0}{S_1\cong\F_3}
					\hexagon{\hs}{0}{S_8\cong\F_5(1)}
					\hexagon{-2*\hs}{-1.5}{S_2\cong\F_2(1)}
					\hexagon{0}{-1.5}{S_6\cong\F_2(2)}
					\hexagon{2*\hs}{-1.5}{S_7\cong\F_2(1)}
					\hexagon{-3*\hs}{-3}{S_3\cong\F_3}
					\hexagon{-\hs}{-3}{S_4\cong\F_2(1)}
					\hexagon{\hs}{-3}{S_5\cong\F_2}
					\node[fill=white, inner sep=1pt, text=red] at (-8/3*\hs,-2.4) {0};  
					\node[fill=white, inner sep=1pt, text=red] at (-7/3*\hs,-2.1) {-2};
					\node[fill=white, inner sep=1pt, text=red] at (-0.15-2*\hs,-3) {-3};  
					\node[fill=white, inner sep=1pt, text=red] at (0.15-2*\hs,-3) {1};
					\node[fill=white, inner sep=1pt, text=red] at (-5/3*\hs,-0.9) {1};  
					\node[fill=white, inner sep=1pt, text=red] at (-4/3*\hs,-0.6) {-3};
					\node[fill=white, inner sep=1pt, text=red] at (-0.15-\hs,-1.5) {-1};  
					\node[fill=white, inner sep=1pt, text=red] at (0.15-\hs,-1.5) {-1};
					\node[fill=white, inner sep=1pt, text=red] at (-4/3*\hs,-2.4) {-1};  
					\node[fill=white, inner sep=1pt, text=red] at (-5/3*\hs,-2.1) {-1};
					\node[fill=white, inner sep=1pt, text=red] at (-1/3*\hs,-0.9) {-2};  
					\node[fill=white, inner sep=1pt, text=red] at (-2/3*\hs,-0.6) {0};
					\node[fill=white, inner sep=1pt, text=red] at (-2/3*\hs,-2.4) {-1};  
					\node[fill=white, inner sep=1pt, text=red] at (-1/3*\hs,-2.1) {-1};
					\node[fill=white, inner sep=1pt, text=red] at (2/3*\hs,-2.4) {-2};  
					\node[fill=white, inner sep=1pt, text=red] at (1/3*\hs,-2.1) {0};
					\node[fill=white, inner sep=1pt, text=red] at (-0.15,-3) {-2};  
					\node[fill=white, inner sep=1pt, text=red] at (0.15,-3) {0};
					\node[fill=white, inner sep=1pt, text=red] at (-0.15,0) {3};  
					\node[fill=white, inner sep=1pt, text=red] at (0.15,0) {-5};
					\node[fill=white, inner sep=1pt, text=red] at (1/3*\hs,-0.9) {-1};  
					\node[fill=white, inner sep=1pt, text=red] at (2/3*\hs,-0.6) {-1};
					\node[fill=white, inner sep=1pt, text=red] at (-0.15+\hs,-1.5) {-1};  
					\node[fill=white, inner sep=1pt, text=red] at (0.15+\hs,-1.5) {-1};
					\node[fill=white, inner sep=1pt, text=red] at (4/3*\hs,-2.4) {0};  
					\node[fill=white, inner sep=1pt, text=red] at (5/3*\hs,-2.1) {-2};
					\node[fill=white, inner sep=1pt, text=red] at (4/3*\hs,-0.6) {-1};  
					\node[fill=white, inner sep=1pt, text=red] at (5/3*\hs,-0.9) {-1};
				\end{tikzpicture}
				\caption{Exceptional surfaces in $X(1,3,13)$ and their intersections}
				\label{fig.rel1313}
			\end{figure}
		\end{proposition}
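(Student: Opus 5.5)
We determine the toric geometry of $X(1,3,13)$ one surface at a time, following the pattern of Proposition~\ref{prop.excsurfin139}. First we read the combinatorics off the junior simplex in Figure~\ref{fig.js1313}. For each new ray $\rho_k$, $1\le k\le 8$, we list the 2-dimensional cones $\langle\rho_k,\rho_l\rangle$ and the 3-dimensional cones of the star subfan $\tilde\Sigma_{S_k}$. Counting the 3-dimensional cones and applying Proposition~\ref{prop.exceptional types} (that is, \cite[Corollary~1.6]{CR}) tells us whether $S_k$ is $\PP^2$, $\F_e$, $\F_e(1)$ or $\F_e(2)$. From the picture, $S_1,S_3,S_5$ have four neighbours, $S_2,S_4,S_7,S_8$ have five, and $S_6$ has six. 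The incidences $C_{k,l}\neq\varnothing$ and the triple points also come directly from the triangles of the simplex.

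Next we pin down the integer $e$ and the self-intersection numbers. For a chosen surface we use the lattice computation of Proposition~\ref{prop.excsurfin139}. We pick $\rho_k$ together with two neighbours spanning a 3-cone with it, apply a lattice isomorphism $\varphi_{S_k}\colon L\to\Z^3$ sending $\rho_k\mapsto(0,0,1)$, and express each neighbour $\rho_l-\rho_k$ as an integral combination of two adjacent ones. These linear relations, e.g.\ $\rho_l-\rho_k=a(\rho_m-\rho_k)-(\rho_n-\rho_k)$, give exactly the coefficients $V(\tau)^2=-a$ in the 2-dimensional fan of $S_k$. This identifies $S_k$ with the standard fans of Figures~\ref{fig.hizfan}--\ref{fig.hizfan(2)}, and Proposition~\ref{prop.timesD} gives $C_{k,l}^2$ in $S_k$ as $V(\tau_{S_k})^2$.

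It suffices to do this explicitly for a few well-chosen surfaces, namely $S_6\cong\F_2(2)$, $S_8$, and one of $S_4$ or $S_7$. Lemma~\ref{lemma.sum}, $c_1+c_2=-2$, then gives the self-intersection of each shared curve in the other surface. The surfaces $S_1,\dots,S_4$ around the corner $\rho_{11}$ and $\rho_9$ look locally like those in $X(1,3,9)$, and the same kind of relation computation confirms that their numbers coincide with Proposition~\ref{prop.inter.139}. For any neighbouring surface whose full set of self-intersections is obtained this way, we identify it through the numbers it carries. In a toric surface, the cyclic sequence of self-intersections determines $\F_e$, $\F_e(1)$ or $\F_e(2)$ together with $e$. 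For example, an intersection of $3$ on the $S_1$ side forces $-5$ in $S_8$, which together with its five cones gives $\F_5(1)$. When a surface has some edges undetermined by propagation, we still compute one relation directly.

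The main obstacle is bookkeeping the lattice relations in $L=\Z^3+\Z\tfrac1{17}(1,3,13)$ correctly, especially for $S_6$ with six neighbours and for the surfaces adjacent to the non-compact divisors $V(\rho_9)$, $V(\rho_{10})$, $V(\rho_{11})$. For these, the compact curves are only a subset of the edges, so Lemma~\ref{lemma.sum} cannot close the cycle and the relations must be computed directly. As a consistency check, we verify in each fan that the self-intersections satisfy the toric relation $\sum_i D_i^2=12-3n$, where $n$ is the number of rays.
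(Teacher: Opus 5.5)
Your proposal is correct and follows essentially the paper's route: the paper computes only $S_4\cong\F_2(1)$ directly from a lattice relation, as in Proposition~\ref{prop.excsurfin139}, and then obtains every other surface type and self-intersection number by propagating with Lemma~\ref{lemma.sum} and Proposition~\ref{prop.exceptional types}. Your extra direct computations for $S_6$ and $S_8$ are consistent with this (for example, $(\rho_8-\rho_6)+(\rho_2-\rho_6)=2(\rho_1-\rho_6)$ gives $C_{16}^2=-2$ in $S_6$). They are not strictly needed, though: the type classification, together with the adjacency and triple-point data from the triangulation, already fixes the remaining numbers even on surfaces that meet the non-compact divisors.
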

		\begin{proof}
			As in the proof of Proposition~\ref{prop.excsurfin139}, denote the subfan generated by $\{\rho_2,\rho_3,\rho_4,\rho_5,\rho_6,\rho_8\}$ as $\Sigma_4$. Then there is a lattice isomorphism $\phi\colon L_{(1,3,13)}\to \Z^3$, such that $\Sigma_4$ and $\Sigma_{\F_2(1)}\times K_{\F_2(1)}$ are compatible. Thus, $S_4\cong\F_2(1)$ and $C_{4,k}\cong V(v_k)$ for $k=2,3,5,6,8$, where $v_k$ is the 1-dimensional cone in $\Sigma_{\F_2(1)}$ as shown in Figure~\ref{fig.H2(1)}. 
			Thus 
			\[C_{24}^2=C_{46}^2=-1,\qquad
			C_{45}^2=-2,\qquad
			C_{34}^2=1,\]
			in $S_4$.
			Then the remaining exceptional surfaces and intersection curves follow from Lemma~\ref{lemma.sum} and Proposition~\ref{prop.exceptional types}.
		\end{proof}
		\subsection{The group action on $D(X(1,3,13))$}
		
		Denote $\cE_{2l-1}\coloneqq i_{2l-1*}\cO_{S_{2l-1}}$ and $\cE_{2l}\coloneqq i_{2l*}\cO_{S_{2l}}(-C_{2l-1,2l})$ for $l=1,2,3$. Denote $\cE_t=i_{t*}\cO_{S_t}\otimes \cO_{X(1,3,13)}(S_7+S_8)$ for $t=7,8$. Thus, using Proposition~\ref{prop.sph} and Lemma~\ref{lemma.sph}, each $\cE_k$ is spherical in $D(X(1,3,13))$. Denote $T_k\coloneqq T_{\cE_k}$, the spherical twist associated with $\cE_k$. This yields an algebraic braid twist group action on $D(X(1,3,13))$.
		
		\begin{theorem}\label{thm.E8}
			The algebraic braid twist group $\AT(Q',W')$ associated with the quiver $(Q',W')$ in Figure~\ref{fig.quiver1313} acts on $D(X(1,3,13))$ faithfully via $\beta_k\mapsto T_k$.
			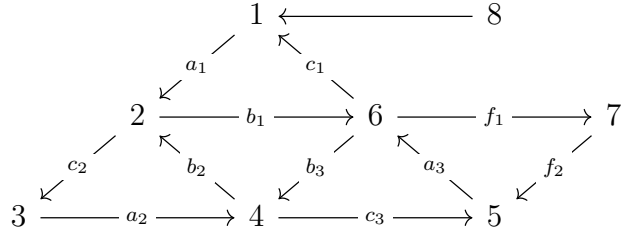
\begin{figure}[htbp]
				\centering
				\begin{tikzcd}
					&& 1 && 8 \\
					& 2 && 6 && 7 \\
					3 && 4 && 5
					\arrow["{a_1}"{description}, from=1-3, to=2-2]
					\arrow[from=1-5, to=1-3]
					\arrow["{b_1}"{description}, from=2-2, to=2-4]
					\arrow["{c_2}"{description}, from=2-2, to=3-1]
					\arrow["{c_1}"{description}, from=2-4, to=1-3]
					\arrow["{f_1}"{description}, from=2-4, to=2-6]
					\arrow["{b_3}"{description}, from=2-4, to=3-3]
					\arrow["{f_2}"{description}, from=2-6, to=3-5]
					\arrow["{a_2}"{description}, from=3-1, to=3-3]
					\arrow["{b_2}"{description}, from=3-3, to=2-2]
					\arrow["{c_3}"{description}, from=3-3, to=3-5]
					\arrow["{a_3}"{description}, from=3-5, to=2-4]
				\end{tikzcd}
				\caption{$(Q',W'=\sum_1^3(-a_kb_kc_k)+b_3b_2b_1+a_3f_1f_2)$}
				\label{fig.quiver1313}
			\end{figure}
		\end{theorem}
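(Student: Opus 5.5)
The proof will follow the same two-stage pattern as Theorem~\ref{thm.D6}. First I show that $\{\cE_k\}_{1\le k\le 8}$ is a $(Q',W')$-configuration, so that Proposition~\ref{prop.cycle} gives the action $\beta_k\mapsto T_k$. Then I transform this configuration, using elements of $\AT(Q',W')$, into an $E_8$-configuration and invoke \cite[Theorem~1]{NV}.

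\textbf{Stage 1: the configuration.} The subquiver on vertices $1,\dots,6$ and the corresponding surfaces and self-intersection numbers in Figure~\ref{fig.rel1313} coincide with those of $X(1,3,9)$, except near $S_5\cong\F_2$ and $S_6\cong\F_2(2)$. So I will rerun Propositions~\ref{prop.139A2}, \ref{prop.139xzyperp} and \ref{prop.139xyzperp} essentially verbatim, each time using Corollary~\ref{coro.Homspace} and Lemma~\ref{lemma.number}. I must recheck the cases in which $S_5$, $S_6$, $S_7$ or $S_8$ meet the relevant curves. In particular $C_{16}$ now meets $S_8$, so the twisting divisor computation for $\dHomk(\cE_6,\cE_1)$ has to be redone.

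For the new vertices I need the following:
\begin{itemize}
\item $\dHomk(\cE_8,\cE_1)$, $\dHomk(\cE_6,\cE_7)$ and $\dHomk(\cE_7,\cE_5)$ are one-dimensional.
\item $\dHomk(\cE_8,\cE_6)$ and $\dHomk(\cE_7,\cE_8)$ vanish, even though $S_6\cap S_8$ and $S_7\cap S_8$ are curves.
\end{itemize}
The quiver has no arrows $8$--$6$ and $7$--$8$, so the second item is genuinely required. This is where the twist $\cO(S_7+S_8)$ in the definition of $\cE_7,\cE_8$ should enter. For example, $\dHomk(\cE_8,\cE_6)\cong\Gamma^\bullet_{C_{68}}\cO_{C_{68}}(S_8+S_7+S_8-S_5)[-1]$, and degree $-1$ on $\PP^1$ kills it. I will compute these degrees from Figure~\ref{fig.rel1313} via Lemma~\ref{lemma.number}, counting which surfaces meet each curve in a point. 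Pairs whose surfaces are disjoint give zero automatically.

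Next comes the new $3$-cycle $5\to6\to7\to5$ coming from $a_3f_1f_2$. By Lemma~\ref{lemma.change} it suffices to prove a single orthogonality, say $T_7\cE_5\in\cE_6^\perp$ or $T_6\cE_7\in\cE_5^\perp$, choosing whichever pair meets along a fibre, as in Lemma~\ref{lemma.extcon}. I will identify the twist with $i_{*}$ of a line bundle $\cM$ on the union of the two surfaces. Where the relevant intersection curve is not a fibre, I will write $\cM$ via a toric linear equivalence on $S_6\cong\F_2(2)$ or $S_7\cong\F_2(1)$, analogous to Lemmas~\ref{lemma.divisorsim} and \ref{lemma.F2F1}. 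I then compute $\cK$ from Lemma~\ref{lemma.homexten} and check that its restrictions are $\cO$ and $\cO(-1)$, so that Lemma~\ref{lemma.perpgen} applies. I expect this stage to be the main obstacle: the $\F_2(2)$ surface $S_6$ carries curves of nonzero self-intersection, and the correct linear equivalence and signs must be found so that the degrees come out to exactly $0$ and $-1$. If the first choice fails, I will permute the cycle vertex using Lemma~\ref{lemma.change}.

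\textbf{Stage 2: faithfulness.} Starting from the $D_6$-configuration $\cF_1,\dots,\cF_6$ of Proposition~\ref{prop.D6} built from $\cE_1,\dots,\cE_6$, I will adjoin two further objects obtained from $\cE_7$ and $\cE_8$ by twists, such as $T_5(\cE_7)$ or $T_6T_5(\cE_7)$, and a suitable twist of $\cE_8$. These are chosen so that the eight objects form an $E_8$ configuration. The required dimensions are computed exactly as in Proposition~\ref{prop.D6}, using Lemma~\ref{lemma.self}, Lemma~\ref{lemma.A3}, the cycle orthogonalities and Serre duality.

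Then \cite[Theorem~1]{NV} gives a faithful $\Br(E_8)$-action. By Lemma~\ref{lemma.equivalence}(2), each $T_{\cF'_k}$ is a conjugate of some $T_j$. Inverting these formulas, as in the proof of Theorem~\ref{thm.D6}, expresses each $T_j$ in terms of the $T_{\cF'_k}$. Hence the images of $\AT(Q',W')$ and $\Br(E_8)$ coincide, and faithfulness transfers.
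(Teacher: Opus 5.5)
\textbf{Overall verdict.} Your two-stage plan is the paper's: show $\{\cE_k\}_{1\le k\le 8}$ is a $(Q',W')$-configuration, get the action from Proposition~\ref{prop.cycle}, then pass to an $E_8$-configuration and apply \cite[Theorem~1]{NV}. Stage~1 is workable, since by Lemma~\ref{lemma.change} any rotation of the new $3$-cycle suffices. The gap is in Stage~2: you never actually produce the $E_8$-configuration, and the candidates you name fail.

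\textbf{Why your candidates fail.} Since $\dHomk(\cE_5,\cE_7)=\C[-2]$, there is a triangle $\cE_5[-2]\to\cE_7\to T_5\cE_7$. As $S_4\cap S_7=\varnothing$, applying $\dHomk(-,\cE_4)$ gives $\dHomk(T_5\cE_7,\cE_4)\cong\dHomk(\cE_5,\cE_4)[1]\neq0$. Also $\dHomk(T_5\cE_7,\cE_5)\cong\dHomk(\cE_7,T_5^{-1}\cE_5)$ is a shift of $\dHomk(\cE_7,\cE_5)\neq0$. So $T_5\cE_7$ is joined to both $\cF_1=\cE_5$ and $\cF_3=\cE_4$, which are joined to each other, and the graph is no longer a tree. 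An Euler characteristic count rules out $T_6T_5\cE_7$ as well: its class is $[\cE_7]-[\cE_5]+2[\cE_6]$, whose Euler pairing with $[\cE_4]$ is $\pm3$.

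\textbf{The missing observation.} No twisting is needed: the paper takes $\cF_7=\cE_7$ and $\cF_8=\cE_8$. In $Q'$, vertex $7$ is adjacent only to $5,6$ and vertex $8$ only to $1$. So everything is immediate except the pairings with $\cF_2=T_4T_5T_2\cE_6$:
\[
\dHomk(\cE_7,\cF_2)\cong\dHomk(\cE_7,T_5\cE_6)=0,\qquad \dHomk(\cE_8,\cF_2)\cong\dHomk(\cE_8,\cE_6)=0 .
\]
These hold because $S_7$ and $S_8$ miss $S_2$ and $S_4$, $S_8$ misses $S_5$, and $T_2T_5\cong T_5T_2$. Then $\cE_7$ hangs off the leaf $\cF_1$ and $\cE_8$ off the end $\cF_6$. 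The arms at $\cF_3$ have lengths $1,2,4$, which is $E_8$.

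\textbf{Consequence for Stage~1.} This tells you which rotation of the new cycle to prove: $T_5\cE_6\in\cE_7^\perp$. It is also the easiest one. Lemma~\ref{lemma.extcon} applies verbatim to give $T_5\cE_6\cong i_{56*}\cO_{S_{56}}$. Lemma~\ref{lemma.homexten} then gives $\cK|_{C_{57}}\cong\cO$ and $\cK|_{C_{67}}\cong\cO(S_5-2)\cong\cO(-1)$. No linear equivalence on $S_6\cong\F_2(2)$ is needed. A small slip: the twisting bundle for $\dHomk(\cE_8,\cE_6)$ is $\cO_{C_{68}}(S_8-S_7-S_8-S_5)=\cO_{C_{68}}(-S_7-S_5)$. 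Your signs on $S_7$ and $S_8$ are reversed, and the degree $-1$ matches only by coincidence.

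\textbf{Faithfulness transfer.} ``The images coincide'' does not by itself give faithfulness. It only shows that $\AT(Q',W')$ modulo the kernel of the action is isomorphic to $\Br(E_8)$. You need one of the following:
\begin{itemize}
\item the words representing the $T_{\cF_k}$ satisfy the $E_8$ braid relations inside $\AT(Q',W')$ itself, giving a compatible homomorphism $\Br(E_8)\to\AT(Q',W')$; or
\item an abstract isomorphism $\AT(Q',W')\cong\Br(E_8)$, together with Hopficity.
\end{itemize}
The paper is equally brief at this step, so this is a shared omission rather than a deviation.
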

		
		As in the case $X(1,3,9)$, we have
		\begin{proposition}\label{prop.D6con}
			$\{\cE_k\}_{1\leq k\leq 6}$ is a $(Q,W)$-configuration in $D(X(1,3,13))$.
		\end{proposition}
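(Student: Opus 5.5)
The plan is to transport the proof of Theorem~\ref{thm.D6} verbatim. The only input that changes is the local geometry of $S_1,\dots,S_6$ inside $X(1,3,13)$ compared with $X(1,3,9)$. Comparing Figure~\ref{fig.rel1313} with Figure~\ref{fig.rel139}, the types of $S_1,\dots,S_4$ and all self-intersection numbers along $C_{12},C_{23},C_{34},C_{24},C_{26},C_{46},C_{16}$ agree. The curve $C_{45}$ still has self-intersection $-2$ in $S_4$. What differs is $S_5\cong\F_2$ (instead of $\F_3$), $S_6\cong\F_2(2)$ (instead of $\F_2(1)$), and the new neighbours $S_7,S_8$. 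So I will re-check each step of Propositions~\ref{prop.139A2}, \ref{prop.139xzyperp} and~\ref{prop.139xyzperp}, this time without appealing to rotational symmetry, since it is broken here.

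\textbf{Step 1: the Hom spaces.} I would check $\dHomk(\cE_k,\cE_l)=\C[-1]$ for each arrow of $Q$ by Corollary~\ref{coro.Homspace}, exactly as in Proposition~\ref{prop.139A2}. Each such space is $\Gamma^\bullet_{C_{kl}}$ of a line bundle of the form $\cO_{C_{kl}}(S_k+S_{k'}-S_{l'})$, where $S_{k'},S_{l'}$ are the twisting divisors. Its degree is computed from Lemma~\ref{lemma.number}, together with the intersection counts of $C_{kl}$ with the twisting divisors: a point or empty, read off Figure~\ref{fig.rel1313}. For instance, $C_{45}$ has self-intersection $0$ in $S_5$ and meets $S_4$, and $C_{56}$ has self-intersection $0$ in $S_5$. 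I must make sure that $S_7,S_8$ do not enter these computations, which holds because $\cE_1,\dots,\cE_6$ are only twisted by $S_1,S_3,S_5$. For non-adjacent pairs, $\dHomk=0$ because the supports are disjoint.

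\textbf{Step 2: the 3-cycle conditions.} There are four 3-cycles, and for each I would apply Lemma~\ref{lemma.perpgen}.

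For $3\to4\to2$, Lemma~\ref{lemma.extcon} gives $T_3\cE_4\cong i_{34*}\cO_{S_{34}}$. The computation of $\cK$ is then identical to Proposition~\ref{prop.139xzyperp}, because it only involves $S_1,\dots,S_4$.

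For $1\to2\to6$, I use the same argument. This needs $C_{12}^2=-3$ in $S_1$, $C_{12}^2=1$ in $S_2$, and $C_{16}$ disjoint from $S_3$.

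For $5\to6\to4$, the same argument applies to $T_5\cE_6\cong i_{56*}\cO_{S_{56}}$. The condition $D_2'\sim D_1$ still holds, and $S_3\cap S_6=\varnothing$. I recompute $\cK|_{C_{45}}\cong\cO$ and $\cK|_{C_{46}}\cong\cO_{C_{46}}(S_5-2)\cong\cO(-1)$, using that $C_{46}$ meets $S_5$ in one point.

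For $2\to4\to6$, I follow Lemmas~\ref{lemma.divisorsim} and~\ref{lemma.F2F1}. Since $S_4$ and its fan are literally the same as before (this is stated in the proof of Proposition~\ref{prop.exceptionalsurfaces1313}), the linear equivalence $C_{24}\sim 2F_4+C_{45}-C_{46}$ and the description $T_4\cE_2\cong i_{24*}\cO_{S_{24}}(C_{24}-C_{12}-2F_4-C_{45})$ carry over unchanged. The restrictions of $\cK$ to $C_{46}$ and $C_{26}$ involve only intersection numbers in $S_4$ and $S_2$, together with $C_{26}\cap S_1=\{\mathrm{pt}\}$, so these too are unchanged.

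\textbf{Main obstacle.} The genuinely new point is the cycle through $S_5,S_6$. In the $(1,3,9)$ case it was handled by symmetry with $1\to2\to6$. Here $S_5\cong\F_2$ and $S_6\cong\F_2(2)$, so the self-intersections change: $C_{56}$ is $-2$ in $S_6$ and $0$ in $S_5$, rather than $1$ and $-3$. Consequently the hypotheses of Lemma~\ref{lemma.extcon} and Lemma~\ref{lemma.homexten} must be re-verified by hand, in particular that $\cE_6=i_{6*}\cO_{S_6}(-C_{56})$ satisfies $\dHomk(\cE_5,\cE_6)=\C[-1]$ and that the degree of $\cK$ on $C_{46}$ is $-1$. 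If the twist $T_5\cE_6$ is no longer of the simple form $\cO_{S_{56}}$, I would fall back to Lemma~\ref{lemma.change} and instead verify the equivalent condition $T_4\cE_5\in\cE_6^\perp$ or $T_6\cE_4\in\cE_5^\perp$. I would choose whichever of these makes the relevant line bundle on $S_4\cup S_5$ or $S_4\cup S_6$ easiest to identify via the toric description of $S_4$.
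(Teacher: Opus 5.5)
Your proposal is correct and follows the paper's route: the paper justifies this proposition only by ``as in the case $X(1,3,9)$'', i.e.\ by re-running Propositions~\ref{prop.139A2}, \ref{prop.139xzyperp} and~\ref{prop.139xyzperp} with the data of Figure~\ref{fig.rel1313}, and you do the same, more carefully, by checking all four 3-cycles directly now that the rotational symmetry is gone. One correction, which does not affect the argument: you have the self-intersections of $C_{56}$ reversed (it is $-2$ in $S_5\cong\F_2$ and $0$ in $S_6\cong\F_2(2)$), but neither $\dHomk(\cE_5,\cE_6)=\C[-1]$ nor $T_5\cE_6\cong i_{56*}\cO_{S_{56}}$ depends on these numbers, so your ``main obstacle'' does not arise and the fallback via Lemma~\ref{lemma.change} is never needed (likewise, for $1\to2\to6$ what enters $\cK$ is $C_{16}^2=0$ in $S_1$, $C_{26}^2=-1$ in $S_2$ and $S_5\cap(S_1\cup S_2)=\varnothing$, not the $C_{12}$ data).
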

		
		In addition,
		\begin{proposition}\label{prop.1313A2}
			We have
			\begin{itemize}
				\item[(1)] $\dHomk(\cE_8,\cE_1)=\C[-1]$;
				\item[(2)] $\dHomk(\cE_8,\cE_6)=0$;
				\item[(3)] $\dHomk(\cE_8,\cE_7)=0$;
				\item[(4)] $\dHomk(\cE_6,\cE_7)=\C[-1]$;
				\item[(5)] $\dHomk(\cE_7,\cE_5)=\C[-1]$.
			\end{itemize}
		\end{proposition}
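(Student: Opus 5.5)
The plan is to treat each of the five statements as a direct application of Corollary~\ref{coro.Homspace}, writing every $\cE_k$ in the form $i_{k*}\cO_{S_k}\otimes\cO_X(-D_k')$. Here $X\coloneqq X(1,3,13)$, and the twisting divisors are:
\begin{itemize}
\item $D_1'=D_5'=0$;
\item $D_6'=S_5$, since $\cE_6=i_{6*}\cO_{S_6}(-C_{56})$;
\item $D_7'=D_8'=-(S_7+S_8)$.
\end{itemize}
For surfaces $S_k,S_l$ meeting in $C\coloneqq C_{k,l}\cong\PP^1$, Corollary~\ref{coro.Homspace} gives
\[
\dHomk(\cE_k,\cE_l)\cong\Gamma^\bullet_{C}\cO_C(S_k+D_k'-D_l')[-1].
\]
It therefore suffices to compute the degree of the line bundle $\cO_C(S_k+D_k'-D_l')$. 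If the degree is $0$ we get $\C[-1]$, and if it is $-1$ we get $0$.

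Each degree is computed with two rules. First, Lemma~\ref{lemma.number} gives $\deg\cO_C(S_k)=C^2$ in $S_l$, and $\deg\cO_C(S_l)=C^2$ in $S_k$. Second, for a third surface $S_m$, $\deg\cO_C(S_m)$ is $1$ if $S_m$ meets $C$ in a triple point (a shared vertex in Figure~\ref{fig.rel1313}) and $0$ if $S_m\cap C=\varnothing$. Reading adjacencies and self-intersection numbers off Figure~\ref{fig.rel1313}, I expect the following.
\begin{itemize}
\item[(1)] On $C_{18}$ the bundle is $\cO(S_8-S_7-S_8)=\cO(-S_7)$. Since $S_7\cap C_{18}=\varnothing$, this has degree $0$, giving $\C[-1]$.
\item[(2)] On $C_{68}$ the bundle is $\cO(-S_7-S_5)$. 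Here $S_7$ meets $C_{68}$ at the triple point $S_6\cap S_7\cap S_8$ and $S_5\cap C_{68}=\varnothing$, so the degree is $-1$, giving $0$.
\item[(3)] On $C_{78}$ the bundle is $\cO(S_8)$, of degree $C_{78}^2=-1$ in $S_7$, giving $0$.
\item[(4)] On $C_{67}$ the bundle is $\cO(S_6+S_5+S_7+S_8)$. Its degree is $(-1)+1+(-1)+1=0$, using that $C_{67}^2=-1$ in both $S_6$ and $S_7$ and that $S_5,S_8$ each meet $C_{67}$ at one point. This gives $\C[-1]$.
\item[(5)] On $C_{57}$ the bundle is $\cO(-S_8)$. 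Since $S_8\cap C_{57}=\varnothing$, this gives $\C[-1]$.
\end{itemize}

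The main obstacle is not the formal computation but reading the configuration correctly. For each curve $C_{k,l}$ one must decide exactly which other exceptional surfaces meet it, and at how many points. One must also check that the self-intersection numbers attached to $S_6\cong\F_2(2)$, $S_7\cong\F_2(1)$ and $S_8\cong\F_5(1)$ are as shown in Figure~\ref{fig.rel1313}. Where the figure is ambiguous, I would verify these adjacencies directly from the fan in Figure~\ref{fig.js1313}: two rays span a $2$-cone exactly when the corresponding surfaces meet, and three rays span a $3$-cone exactly for triple points. The self-intersection values would be cross-checked with Lemma~\ref{lemma.sum}.
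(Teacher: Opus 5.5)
Your proposal is correct and follows the paper's proof. Each Hom space is reduced via Corollary~\ref{coro.Homspace} to the cohomology of a line bundle on $C_{k,l}\cong\PP^1$; the paper cites Corollary~\ref{coro.Hom} for (1) and (5) and Proposition~\ref{prop.otimes} for (3), but these are the same computation. The degrees come from Lemma~\ref{lemma.number}, Lemma~\ref{lemma.sum} and the triple-point adjacencies, and your values $0,-1,-1,0,0$ agree with the paper's and with the fan in Figure~\ref{fig.js1313}.
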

		\begin{proof}
			(1) and (5) follow from Corollary~\ref{coro.Hom}, since $S_7\cap S_1=S_8\cap S_5=\varnothing$.
			
			(2) follows from Corollary~\ref{coro.Homspace}, 
			\[\dHomk(\cE_8,\cE_6)\cong \Gamma_{C_{68}}^\bullet\cO_{C_{68}}(S_8-S_7-S_8-S_5)[-1]\cong \Gamma_{C_{68}}^\bullet\cO_{C_{68}}(-1)[-1]=0,\]
			where the last isomorphism uses $S_5\cap C_{68}=\varnothing$ and  $S_7\cap C_{68}=\{\text{a point}\}$.
			
			(3) follows from Proposition~\ref{prop.otimes},
			\[\dHomk(\cE_8,\cE_7)\cong \Gamma_{C_{78}}^\bullet\cO_{C_{78}}(S_8-S_8-S_7+S_8+S_7)[-1]\cong \Gamma_{C_{78}}^\bullet\cO_{C_{78}}(-1)[-1]=0,\] 
			where the last isomorphism uses Lemma~\ref{lemma.number} and Proposition~\ref{prop.exceptionalsurfaces1313}.
			
			(4) follows from Corollary~\ref{coro.Homspace},
			\[\dHomk(\cE_6,\cE_7)\cong \Gamma_{C_{67}}^\bullet\cO_{C_{67}}(S_6+S_7+S_5+S_8)[-1]\cong \Gamma_{C_{67}}^\bullet\cO_{C_{67}}[-1]=\C[-1],\]
			where the last isomorphism uses Lemma~\ref{lemma.sum}, $S_8\cap C_{67}=\{\text{a point}\}$ and $S_5\cap C_{67}=\{\text{a point}\}$.
		\end{proof}

		Denote $S_{56}\coloneqq  S_5\cup S_6$ and the closed embedding by $i_{56}\colon S_{56}\to X(1,3,13)$. Denote $C_7=C_{57}\cup C_{67}$ and the closed embedding by $j_7\colon  C_7\to S_{56}$.
		
		\begin{proposition}\label{prop.1313xyzperp(1)}
			$T_5\cE_6$ lies in the right orthogonal of $\cE_7$, i.e.,~$T_5\cE_6\in\cE_7^\perp$.
		\end{proposition}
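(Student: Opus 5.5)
The plan is to run the same argument as in Proposition~\ref{prop.139xzyperp}, applying Lemmas~\ref{lemma.extcon}, \ref{lemma.homexten} and~\ref{lemma.perpgen} with
\[(D_1,D_2,D_3)=(S_5,S_6,S_7),\qquad D_1'=0,\qquad D_2'=S_5,\qquad D_3'=S_7+S_8.\]
With these choices $\cE_1=\cE_5$, $\cE_2=\cE_6=i_{6*}\cO_{S_6}(-C_{56})$ and $\cE_3=\cE_7$. The curves play the roles
\[C=C_{56},\qquad C_{13}=C_{57},\qquad C_{23}=C_{67},\qquad \{p\}=S_5\cap S_6\cap S_7.\]
From Figure~\ref{fig.rel1313}, all three curves are $\PP^1$'s and $S_5,S_6,S_7$ meet in a triple point, as in Figure~\ref{fig.rel1}.

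\textbf{Step 1: identify $T_5\cE_6$.} The hypotheses of Lemma~\ref{lemma.extcon} are $D_2'=S_5\sim D_1$ and $D_1'\cap D_2=\varnothing$, the latter trivially since $D_1'=0$. So Lemma~\ref{lemma.extcon} gives $T_5\cE_6\cong i_{56*}\cO_{S_{56}}$, that is, $\cM=\cO_{S_{56}}$. The input $\dHomk(\cE_5,\cE_6)=\C[-1]$ is part of Proposition~\ref{prop.D6con}; it also follows directly from Corollary~\ref{coro.Hom}.

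\textbf{Step 2: check the hypotheses of Lemma~\ref{lemma.homexten}.} Proposition~\ref{prop.1313A2}(5) gives $\dHomk(\cE_7,\cE_5)=\C[-1]$. Proposition~\ref{prop.1313A2}(4) gives $\dHomk(\cE_6,\cE_7)=\C[-1]$, and Serre duality on the Calabi--Yau $3$-fold turns this into $\dHomk(\cE_7,\cE_6)=\C[-2]$. Lemma~\ref{lemma.homexten} then yields
\[\mathcal{H}om(\cE_7,T_5\cE_6)\cong i_{56*}j_{7*}\cK[-1],\qquad \cK=\cO_{C_7}(S_7+S_7+S_8),\]
together with the following restrictions:
\[\cK|_{C_{57}}\cong j^*\cO_{S_{56}}\otimes\cO_{C_{57}}(D_1')=\cO_{C_{57}},\]
\[\cK|_{C_{67}}\cong\cO_{C_{67}}(S_5-2)\cong\cO_{C_{67}}(-1).\]
The last isomorphism uses that $S_5\cap C_{67}$ is a single reduced point, as already used in the proof of Proposition~\ref{prop.1313A2}(4).

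\textbf{Step 3: conclude.} The restrictions in Step 2 are exactly the hypotheses of Lemma~\ref{lemma.perpgen}, so $T_5\cE_6\in\cE_7^\perp$.

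\textbf{Main obstacle.} The step I expect to need the most care is the bookkeeping in Step 2. One must confirm that the labelling matches the hypotheses of Lemma~\ref{lemma.homexten}: the $\C[-1]$ curve is $C_{57}\subset S_5=D_1$ and the $\C[-2]$ curve is $C_{67}\subset S_6=D_2$. One must also check the intersection facts read off Figure~\ref{fig.rel1313}, namely that $S_5\cdot C_{67}=1$ and that the three surfaces genuinely meet at one point $p$. These are needed so that $C_7$ has the nodal local model used in Lemma~\ref{lemma.perpgen}.
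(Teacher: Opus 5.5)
Your proposal is correct and follows the paper's proof step for step: Lemma~\ref{lemma.extcon} gives $T_5\cE_6\cong i_{56*}\cO_{S_{56}}$, Lemma~\ref{lemma.homexten}(2),(3) give $\cK|_{C_{57}}\cong\cO_{C_{57}}$ and $\cK|_{C_{67}}\cong\cO_{C_{67}}(S_5-2)\cong\cO_{C_{67}}(-1)$, and Lemma~\ref{lemma.perpgen} concludes. There is one harmless sign slip: since $\cE_7=i_{7*}\cO_{S_7}\otimes\cO_X(S_7+S_8)$ and the convention is $\cE_k\cong i_{k*}\cO_{D_k}\otimes\cO_X(-D_k')$, you should take $D_3'=-(S_7+S_8)$, which gives $\cK=\cO_{C_7}(-S_8)$ as in the paper rather than $\cO_{C_7}(2S_7+S_8)$; this does not affect your argument, because the restrictions you actually use come from Lemma~\ref{lemma.homexten}(2),(3) and depend only on the Hom inputs (and in fact $\cO_{C_7}(S_7+S_8)$ has degree $0$ on both components, so the two bundles coincide).
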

		\begin{proof}
			Using Proposition~\ref{prop.exceptionalsurfaces1313} and Lemma~\ref{lemma.extcon}, we can directly get
			\[T_5\cE_6\cong {i_{56*}}\cO_{S_{56}}.\]
			
			By Lemma~\ref{lemma.homexten}(1),
			\[\mathcal{H}om_{X(1,3,13)}(\cE_7, T_5\cE_6)
			\cong  i_{56*}j_{7*}\cK[-1]\]
			where $\cK=\mathcal{O}_{C_7}(S_7-S_7-S_8)\otimes j_7^*\cM$ and $\cM= \cO_{S_{56}}$. 
			
			Using Lemma~\ref{lemma.homexten}(2), we have \(\cK|_{C_{57}}\cong \mathcal{O}_{C_{57}}\), 
			
			Using Lemma~\ref{lemma.homexten}(3), we have \(\cK|_{C_{67}}\cong \mathcal{O}_{C_{67}}(S_5-2)\cong \mathcal{O}_{C_{67}}(-1)\), because $S_5\cap C_{67}=\{\text{a point}\}$.
			
			Thus, the claim follows from Lemma~\ref{lemma.perpgen}.
		\end{proof}
		
		\begin{proposition}\label{prop.E8con}
			\(\{\cE_1,\cE_2,\cE_3,\cE_4,\cE_5,\cE_6,\cE_7,\cE_8\}\) is a $(Q',W')$-configuration in $D(X(1,3,13))$. 
		\end{proposition}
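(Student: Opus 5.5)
To check that $\{\cE_1,\dots,\cE_8\}$ is a $(Q',W')$-configuration, I will verify the three conditions of Definition~\ref{def.con} for $(Q',W')$. The plan is to reuse as much as possible from the case of six objects and handle the new vertices $7,8$ separately.

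\emph{Step 1: the full subquiver on $\{1,\dots,6\}$.} This is exactly $(Q,W)$ of Figure~\ref{fig.quiver139}. By Proposition~\ref{prop.D6con}, the objects $\cE_1,\dots,\cE_6$ already satisfy all three conditions there: the dimension conditions for pairs within $\{1,\dots,6\}$, and the orthogonality conditions for the $3$-cycles $a_kb_kc_k$ and $b_3b_2b_1$.

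\emph{Step 2: pairs involving $7$ or $8$.} In $Q'$ the arrows touching $7,8$ are $8\to1$, $6\to7$ and $7\to5$.
\begin{itemize}
\item Proposition~\ref{prop.1313A2}(1),(4),(5) gives one-dimensional $\dHomk$ for the three arrows.
\item Proposition~\ref{prop.1313A2}(2),(3) gives vanishing for the non-adjacent intersecting pairs $(8,6)$ and $(8,7)$.
\item For every remaining pair $(k,l)$ with $k\in\{7,8\}$, Figure~\ref{fig.rel1313} shows $S_k\cap S_l=\varnothing$. These pairs are $\{7,1\}$, $\{7,2\}$, $\{7,3\}$, $\{7,4\}$, $\{8,2\}$, $\{8,3\}$, $\{8,4\}$ and $\{8,5\}$. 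For each of them $\mathcal{H}om_X(\cE_k,\cE_l)$ is supported on $S_k\cap S_l=\varnothing$, so $\dHomk(\cE_k,\cE_l)=0$.
\end{itemize}
Serre duality (the Remark after Proposition~\ref{prop.sph}) transfers each statement to the reversed order $(l,k)$, so only one direction needs checking in each case.

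\emph{Step 3: the new $3$-cycle.} The only new term in $W'$ is $a_3f_1f_2$, the cycle $5\to6\to7\to5$. Proposition~\ref{prop.1313xyzperp(1)} gives $T_5\cE_6\in\cE_7^\perp$. By Lemma~\ref{lemma.change}, which applies because each pair in $\{5,6,7\}$ forms an $A_2$-configuration, this orthogonality condition does not depend on the starting vertex of the cycle. Together with Step 1, this covers every cycle of $W'$.

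\emph{Main obstacle.} The hardest part is the bookkeeping in Step 2. One must confirm from the junior simplex in Figure~\ref{fig.js1313} that the listed surfaces are genuinely disjoint, and that no arrow of $Q'$ is missing or extra. In particular, one must check that $S_6\cap S_8$ and $S_7\cap S_8$ are curves even though the quiver has no arrow there; it is precisely the choice of twist $\cO_X(S_7+S_8)$ on $\cE_7$ and $\cE_8$ that kills these $\dHomk$ groups. I would read the cones $\langle\rho_k,\rho_l\rangle$ directly off the fan to settle adjacency, and then conclude the proposition.
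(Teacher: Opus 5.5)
Your proposal is correct and is essentially the paper's argument, which simply combines Proposition~\ref{prop.D6con}, Proposition~\ref{prop.1313A2} and Proposition~\ref{prop.1313xyzperp(1)}; you additionally make explicit three points the paper leaves implicit: the vanishing of $\dHomk$ for the disjoint pairs, the transfer to the reversed order by Serre duality, and the rotation invariance of the cycle condition via Lemma~\ref{lemma.change}. One side remark in your last paragraph is inaccurate: the untwisted sheaves $i_{8*}\cO_{S_8}$ and $i_{7*}\cO_{S_7}$ would already give vanishing $\dHomk$ for the pairs $(8,6)$ and $(8,7)$, because $C_{68}^2=-1$ in $S_6$ and $C_{78}^2=-1$ in $S_7$; the twist by $\cO_X(S_7+S_8)$ is really needed on $\cE_8$ to make $\dHomk(\cE_8,\cE_1)$ one-dimensional (since $C_{18}^2=3$ in $S_1$), and putting the same twist on $\cE_7$ just keeps the $(8,7)$ vanishing intact.
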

		\begin{proof}
			This claim follows from Proposition~\ref{prop.D6con},~\ref{prop.1313A2} and~\ref{prop.1313xyzperp(1)}.
		\end{proof}
		
		\begin{proposition}\label{prop.E8} \[\{\cF_1\coloneqq\cE_5,\,\cF_2\coloneqq T_4T_5T_2(\cE_6),\,\cF_3\coloneqq\cE_4,\,\cF_4\coloneqq\cE_3,\,\cF_5\coloneqq T_2(\cE_3),\, \cF_6\coloneqq \cE_1,\,\cF_7\coloneqq\cE_7,\,\cF_8\coloneqq\cE_8\}\] is an $E_8$-configuration as shown in Figure~\ref{fig.E8con}.
			\begin{figure}[htbp]
				\centering
				\begin{tikzcd}
					\cF_7 & \cF_1 & \cF_3 & \cF_4 & \cF_5 & \cF_6 & \cF_8 \\
					&& \cF_2
					\arrow[from=1-1, to=1-2]
					\arrow[from=1-2, to=1-3]
					\arrow[from=1-3, to=1-4]
					\arrow[from=1-4, to=1-5]
					\arrow[from=1-5, to=1-6]
					\arrow[from=1-6, to=1-7]
					\arrow[from=2-3, to=1-3]
				\end{tikzcd}
				\caption{$E_8$-configuration}
				\label{fig.E8con}
			\end{figure}
		\end{proposition}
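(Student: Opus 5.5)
The plan is to reduce everything to Proposition~\ref{prop.D6} together with the Hom computations involving $\cE_7$ and $\cE_8$. By Proposition~\ref{prop.D6con}, $\{\cE_k\}_{1\le k\le 6}$ is a $(Q,W)$-configuration, and $X(1,3,13)$ is Calabi--Yau. Proposition~\ref{prop.D6} then shows that $\{\cF_1,\dots,\cF_6\}$ is a $D_6$-configuration with the edges $\cF_1\text{--}\cF_3$, $\cF_2\text{--}\cF_3$, $\cF_3\text{--}\cF_4\text{--}\cF_5\text{--}\cF_6$. These are exactly the edges of Figure~\ref{fig.E8con} among the first six objects. It remains to compute $\dim\dHomk(\cF_7,\cF_k)$ and $\dim\dHomk(\cF_8,\cF_k)$ for all $k$, and to show they equal $1$ exactly for the pairs $(7,1)$ and $(8,6)$ and $0$ otherwise. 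By Serre duality it suffices to compute in one direction.

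\textbf{The unshifted objects.} For $\cF_7=\cE_7$ and $\cF_8=\cE_8$ paired with $\cE_1,\cE_3,\cE_4,\cE_5$, and with each other, I read off the answer from the quiver $Q'$ and from Proposition~\ref{prop.1313A2}.
\begin{itemize}
\item $\dHomk(\cE_7,\cE_5)=\C[-1]$ gives the edge $\cF_7\text{--}\cF_1$.
\item $\dHomk(\cE_8,\cE_1)=\C[-1]$ gives the edge $\cF_8\text{--}\cF_6$.
\item $\dHomk(\cE_8,\cE_7)=0$.
\item The vanishing of $\dHomk(\cE_t,\cE_k)$ for $t=7,8$ and $k\in\{1,\dots,5\}$ not adjacent follows from disjointness of the surfaces in Figure~\ref{fig.rel1313}. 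Concretely, $S_7$ meets only $S_5,S_6,S_8$, and $S_8$ meets only $S_1,S_6,S_7$.
\end{itemize}

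\textbf{The twisted objects.} Two objects involve twists. For $\cF_5=T_2(\cE_3)$, I use that $T_2$ commutes past $\cE_7,\cE_8$, since $\dHomk(\cE_2,\cE_t)=0$ for $t=7,8$. This gives $\dHomk(\cE_t,T_2\cE_3)\cong\dHomk(T_2^{-1}\cE_t,\cE_3)=\dHomk(\cE_t,\cE_3)=0$.

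For $\cF_2=T_4T_5T_2(\cE_6)$ the computation is more delicate.

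For $\cE_8$: $S_8$ is disjoint from $S_2,S_4,S_5$, so $T_4,T_5,T_2$ all fix $\cE_8$ up to isomorphism. Hence $\dHomk(\cE_8,\cF_2)\cong\dHomk(\cE_8,\cE_6)=0$ by Proposition~\ref{prop.1313A2}(2).

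For $\cE_7$: $\cE_7$ is orthogonal to $\cE_2$ and $\cE_4$ but not to $\cE_5$. I compute
\[\dHomk(\cE_7,T_4T_5T_2\cE_6)\cong\dHomk(T_5^{-1}\cE_7,\,T_2\cE_6)\cong\dHomk(T_7\cE_5,\,T_2\cE_6)[r]\]
using the $A_2$-relation $T_5^{-1}\cE_7\cong T_7\cE_5[r]$ from \cite[Lemma~2.3(1)]{DZ}.

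Next I move $T_7$ across. Since $\dHomk(\cE_7,\cE_2)=0$, $T_7$ commutes with $T_2$, so the expression becomes $\dHomk(\cE_5,T_2T_7^{-1}\cE_6)$.

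Then I use the 3-cycle $5\to 6\to 7\to 5$ in $W'$ (the term $a_3f_1f_2$) together with Proposition~\ref{prop.1313xyzperp(1)} and Lemma~\ref{lemma.change}. These express $T_7^{-1}\cE_6$, up to shift, as $T_6\cE_7$, and give that $T_6\cE_7\in\cE_5^\perp$. Finally, $\cE_5$ is orthogonal to $\cE_2$, so $T_2$ can be moved to the other side. The result is
\[\dHomk(\cE_5,T_6\cE_7)[s]=0\]
for some shift $s$.

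\textbf{Conclusion.} Combining these computations gives the $E_8$ adjacency pattern. Sphericity of each $\cF_k$ follows from Lemma~\ref{lemma.equivalence}(1).

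\textbf{Main obstacle.} I expect the $\cE_7$ versus $\cF_2$ computation to be the hard step. It requires carefully tracking which twists commute and the direction of the relevant 3-cycle. If the chain above fails, the fallback is to apply the orthogonality statements from Lemma~\ref{lemma.change} in the other rotation of the cycle $5\to6\to7\to5$.
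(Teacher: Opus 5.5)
Your proposal is correct and follows the paper's proof: Proposition~\ref{prop.D6} handles $\cF_1,\dots,\cF_6$, and the remaining pairings with $\cF_7,\cF_8$ come from Proposition~\ref{prop.1313A2} and the vanishing of $\dHomk$ between objects supported on disjoint surfaces. Twists by orthogonal objects are stripped off exactly as you do for $\cF_5$ and for $\dHomk(\cF_8,\cF_2)$.

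The only difference is in $\dHomk(\cF_7,\cF_2)$, where the paper argues directly. Since $T_2$ and $T_5$ commute and $\dHomk(\cE_7,\cE_2)=\dHomk(\cE_7,\cE_4)=0$, one gets $\dHomk(\cE_7,T_4T_5T_2\cE_6)\cong\dHomk(\cE_7,T_5\cE_6)$. This vanishes by Proposition~\ref{prop.1313xyzperp(1)} as stated, so your detour through the two $A_2$ relations and Lemma~\ref{lemma.change} is valid but not needed.
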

		\begin{proof}
			By Proposition~\ref{prop.D6}, \[\{\cF_1\coloneqq\cE_5,\,\cF_2\coloneqq T_4T_5T_2(\cE_6),\,\cF_3\coloneqq\cE_4,\,\cF_4\coloneqq\cE_3,\,\cF_5\coloneqq T_2(\cE_3),\, \cF_6\coloneqq \cE_1\}\] is a $D_6$-subconfiguration in $D(X(1,3,13))$. 
			Using Proposition~\ref{prop.E8con}, there is no arrow between $7$ and $k$ in $Q'$, for $k=1,2,3,4,8$.
			Thus
			\[\dHomk(\cF_7,\cF_l)=0,\quad\text{for }l=3,4,5,6,8;\]
			and
			\[\dHomk(\cF_7,\cF_2)=\dHomk(\cE_7,T_4T_5T_2(\cE_6))\cong\dHomk(\cE_7,T_5(\cE_6))=0,\]
			where the isomorphism is because there is no arrow between $2$ and $5$ in $W'$ and the final equality is because there is a cycle $5\to6\to7\to5$ in $W'$.
			
			Using Proposition~\ref{prop.1313A2}(5), we have
			\[\dHomk(\cF_7,\cF_1)=\dHomk(\cE_7,\cE_5)\cong\C[-1].\]
			
			Again by Proposition~\ref{prop.E8con}, there is no arrow between $8$ and $k$ in $Q'$, for $2\leq k\leq 7$.
			Thus
			\[\dHomk(\cF_8,\cF_l)=0,\quad\text{for }l=1,2,3,4,5,7.\]
			
			Again by Proposition~\ref{prop.1313A2}(1), we have
			\[\dHomk(\cF_8,\cF_6)=\dHomk(\cE_8,\cE_1)\cong\C[-1].\]
			
			Thus, the claim follows from the above $\Hom$ computations together with Serre duality.
			
		\end{proof}

		\begin{proof}[Proof of Theorem~\ref{thm.E8}]
			By Proposition~\ref{prop.E8con}, $\{\cE_k\}_{1\leq k\leq 8}$ is a $(Q',W')$-configuration in $D(X(1,3,13))$.
			Thus, the action via $\beta_k\mapsto T_k$ is given by Proposition~\ref{prop.cycle}.
			
			To prove faithfulness, by Proposition~\ref{prop.E8}, \[\{\cF_1\coloneqq\cE_5,\,\cF_2\coloneqq T_4T_5T_2(\cE_6),\,\cF_3\coloneqq\cE_4,\,\cF_4\coloneqq\cE_3,\,\cF_5\coloneqq T_2(\cE_3),\, \cF_6\coloneqq \cE_1,\,\cF_7\coloneqq\cE_7,\,\cF_8\coloneqq\cE_8\}\] is an $E_8$-configuration in $D(X(1,3,13))$ as shown in Figure~\ref{fig.E8con}. Using \cite[Theorem~1]{NV}, the $E_8$-configuration induces a faithful $\Br(E_8)$-action via $T'_k \coloneqq T_{\cF_k}$. For $1 \leq k \leq 6$, the $T'_k$ have the same expressions in terms of $\{T_l\}_{1\leq l\leq 6}$ as in the proof of Theorem~\ref{thm.D6}, and for $t=7,8$ we have $T'_t = T_t$. Thus, arguing as in the proof of Theorem~\ref{thm.D6}, each $T_l$ (for $1 \leq l \leq 8$) can be expressed in terms of the $T'_k$s.
			Hence, the action of $\AT(Q',W')$ is also faithful.
		\end{proof}

\end{sloppypar}
\end{document}